\newtheorem{theorem}{\bf Theorem}[section]
\newtheorem{definition}[theorem]{\bf Definition}
\newtheorem{lemma}[theorem]{\bf Lemma}
\newtheorem{prop}[theorem]{\bf Proposition}
\def\no{\noindent}
\def\Om{\Omega}
\def\pa{\partial}
\def\ov{\overline}
\newcommand{\Rmnum}[1]{\expandafter\@slowromancap\romannumeral #1@}
\makeatother \numberwithin{equation}{section}
\begin{document}

\begin{center}{\LARGE \bf Classification of conformal minimal immersions  from $S^2$ to $G(2,N;\mathbb{C})$ with parallel second fundamental form}
\end{center}

\begin{center}

Xiaoxiang Jiao
\footnote{
X.X. Jiao 

School of Mathematical Sciences, University of Chinese Academy of
Sciences, Beijing 100049, P. R. China

e-mail: xxjiao@ucas.ac.cn}
and
Mingyan Li
\footnote{
M.Y. Li (Corresponding author)

School of Mathematical Sciences, Ocean University of China, Qingdao 266100, P. R. China

e-mail: limingyan@ouc.edu.cn
}  
\end{center}

\bigskip

\no
{\bf Abstract.} In this paper, we determine all conformal minimal immersions of 2-spheres in
complex Grassmann manifold $G(2,N; \mathbb{C})$ with parallel second fundamental form.

\no
{\bf{Keywords and Phrases.}} Conformal minimal immersion, Gauss
curvature, Second fundamental form, Complex Grassmann manifold,
classification.\\

\no
{\bf{Mathematics Subject Classification (2010).}} Primary 53C42, 53C55.

\no
Project supported by the NSFC (Grant Nos. 11871450, 11901534).

\bigskip

\bigskip

\section{Introduction}\label{sec1}

The geometry of an $m$-dimensional smooth submanifold in an $n$-dimensional Riemann space is determined by two symmetric fundamental tensor fields (equivalently, quadratic differential forms): the first fundamental form, that is, the metric tensor, and the second fundamental form, the latter with values in normal vector bundle of the submanifold. It is well known that the first one is parallel by definition, but the second one does not need to be parallel. Therefore an interesting class of submanifolds 
 with parallel second fundamental tensor field can be singed out, and its classification is an enduring and important topic.

It is a long history of studying minimal submanifolds with parallel second fundamental form in various Riemannian spaces. The first result on parallel submanifolds was given by V.F. Kagan \cite{Kagan} in 1948 who showed that the class of parallel surfaces in 3-dimensional Euclidean space consists of open parts of planes, round spheres and circular cylinders $S^1\times \mathbb{R}^1$. Since then, there have emerged many works on parallel submanifolds in not only  Euclidean space, but also various Riemannian spaces (see \cite{Ferus, Ferus1, Naitoh, Takeuchi} and the references therein).  In an elegant paper \cite{Nakagawa},  H. Nakagawa and R. Takagi  studied some properties about
K\"{a}hler imbeddings of compact Hermitian symmetric spaces in
 complex projective space $\mathbb{C}P^n$ and gave a classification of K\"{a}hler submanifolds
in  $\mathbb{C}P^n$ with parallel second
fundamental form. In 1984 A. Ros \cite{Ros}
decided all compact Einstein K\"{a}hler submanifolds in $\mathbb{C}P^n$ with parallel second fundamental
form. Generally, studying classifications of conformal minimal
two-spheres  immersed in various Riemannian symmetric spaces
with parallel second fundamental form is very difficult. Recently, we discussed the geometry of
conformal minimal immersions from $S^2$ to the hyperquadric  $Q_n$  and gave a complete
classification theorem of them under the assumption that they have parallel second fundamental form (cf. \cite{JGA}). L. He and the first author also classified all conformal minimal two-spheres immersed in the quaternionic projective space $HP^n$ with parallel second fundamental form (cf. \cite{HPn1}).

Let $G(2,N;\mathbb{C})$ be the complex Grassmann manifold consisting of all complex 2-dimensional subspaces in the complex vector space $\mathbb{C}^N$. Regarding $\mathbb{C}P^{N-2}, \ Q_{N-2}$ and $HP^n \ (N=2n+2)$ are maximal totally geodesic submanifolds in  $G(2,N;\mathbb{C})$, it was natural to try to extend these results described above to
study the rigidity of harmonic maps from $S^2$ to $G(2,N;\mathbb{C})$. Let $\phi: S^2\rightarrow G(2,N;\mathbb{C})$ be a linearly full conformal minimal immersion with Gauss curvature $K$ and  second fundamental form $B$. Suppose $B$ is parallel, in this paper, we  firstly  investigate  geometry of $\phi$  by the theory
of harmonic maps and generalize  our  characterization of the harmonic sequence generated
by $\phi$. Then we mainly study the rigidity of $\phi$ and give its
 classification  according to  the following four cases:
\\ (I) $\phi$ is a holomorphic curve in $G(2,N;\mathbb{C})$;
\\ (II) rank $\partial{'}\underline{\phi} = $ rank $\partial{''}\underline{\phi} = 1$;
\\ (III) rank $\partial{''}\underline{\phi} = 2$ and rank $\partial{'}\underline{\phi} = 1$;
\\ (IV) rank $\partial{'}\underline{\phi} = $ rank $\partial{''}\underline{\phi} = 2$.

~\\
Our classification theorems of $\phi$ for cases (I)-(IV) are as follows respectively:

\begin{theorem}
Let $\phi: S^2\rightarrow G(2,N;\mathbb{C})$ be a  linearly full holomorphic curve, and let $K$ and $B$ be its Gauss
 curvature and second fundamental form respectively. If $B$ is
 parallel, then  $\phi$ belongs to one of the following
minimal immersions.
\\(1) up to $U(3)$ equivalence, $\phi$ is $\underline{V}^{(2)}_0\oplus \underline{V}^{(2)}_1: S^2 \rightarrow G(2,3; \mathbb{C})$ with $K=2$ and $\|B\|^2=4$;
\\(2) up to $U(3)$ equivalence, $\phi$ is $\underline{V}^{(1)}_0\oplus \underline{c}_0: S^2 \rightarrow G(2,3; \mathbb{C})$ with $K=4$ and $\|B\|^2=0$, where $\ c_0 = (0,0,1)^T$;
\\(3) up to $U(4)$ equivalence, $\phi$ is $\underline{V}^{(2)}_0\oplus \underline{c}_0: S^2 \rightarrow G(2,4; \mathbb{C})$ with $K=2$ and $\|B\|^2=4$, where $\ c_0 = (0,0,0,1)^T$;
\\(4) up to $U(4)$ equivalence, $\phi$ is $\widehat{\underline{V}}^{(1)}_0\oplus \underline{V}^{(1)}_0: S^2 \rightarrow G(2,4; \mathbb{C})$  with $K=2$ and $\|B\|^2=0$;
\\(5) up to $U(6)$ equivalence, $\phi$ is $\widehat{\underline{V}}^{(2)}_0\oplus \underline{V}^{(2)}_0: S^2 \rightarrow G(2,6; \mathbb{C})$  with $K=1$ and $\|B\|^2=2$.
\end{theorem}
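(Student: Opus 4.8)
The plan is to feed $\phi$ into the harmonic-sequence machinery developed in the previous sections and then read off, from the vanishing of $\nabla B$, a short list of admissible configurations. Since $\phi=\underline{\phi}_0$ is a linearly full holomorphic curve, the $\partial''$-direction of its harmonic sequence is trivial, and one is left with the $\partial'$-chain $\underline{\phi}_0\xrightarrow{\partial'}\underline{\phi}_1\xrightarrow{\partial'}\cdots\xrightarrow{\partial'}\underline{\phi}_\rho$, a finite string of mutually orthogonal subbundles of the trivial $\mathbb{C}^N$ bundle whose orthogonal sum is everything. As $\mathrm{rank}\,\underline{\phi}_0=2$ and $\partial'$ never raises the rank, the ranks form a non-increasing string $2,\dots,2,1,\dots,1$; writing $s$ for the last index at which the rank equals $2$, the discrete data of $\phi$ are the pair $(\rho,s)$ together with the ``degrees'' of the line factors, and $N=\rho+s+2$. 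First I would fix, away from the finitely many singular points of the sequence, a local unitary frame adapted to this flag and write out the corresponding structure equations; exactly as in our treatment of $Q_n$ and $HP^n$, these reduce to a Toda-type system for the logarithms of the squared norms $L_i$ of the $\partial'$-maps, and they already express the induced metric (hence $K$) and the components of the second fundamental form $B$ in terms of the $L_i$.

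The second step is to write $B$ and $\nabla B$ explicitly. For a holomorphic curve the $(1,1)$-part of $B$ vanishes, so $B$ is carried by $B(\partial_z,\partial_z)$, whose components along $\underline{\phi}_1$ and along the higher bundles $\underline{\phi}_2,\dots,\underline{\phi}_\rho$ are rational expressions in the $L_i$; imposing $\nabla B=0$ then becomes a system of differential identities for the $L_i$ together with the vanishing of certain of these components. The outcome I expect, obtained as in the earlier cases, is twofold: every $L_i$ must equal a constant multiple of the norm function attached to a Veronese curve, and whenever $\rho\ge 2$ the $\underline{\phi}_j$-components of $B$ with $j\ge 2$ are forced to vanish, which caps the length $\rho$ of the chain. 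Here the rigidity of holomorphic curves enters: a linearly full holomorphic curve $S^2\to\mathbb{C}P^n$ of constant Gauss curvature is congruent to the degree-$n$ Veronese curve, and among Veronese curves the second fundamental form is parallel only in degrees $n\le 2$ (the totally geodesic $\mathbb{C}P^1$, and the conic $Q_1\subset\mathbb{C}P^2$). Hence every ``column'' of $\phi$ is a constant, a degree-one, or a degree-two Veronese curve, and in particular $N\le 6$.

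Finally I would assemble the classification. Integrating the structure equations with these forced constant coefficients, and then applying a unitary change of coordinates, one finds that $\phi$ is either the osculating-flag construction $\underline{V}^{(k)}_0\oplus\underline{V}^{(k)}_1$ of a single Veronese curve (this is the case in which $\underline{\phi}_0$ does not split orthogonally; via $G(2,3)\cong\mathbb{C}P^2$ the parallelism of $B$ then forces $k=2$), or an orthogonal direct sum $\widehat{\underline{V}}^{(p)}_0\oplus\underline{V}^{(q)}_0$ of two Veronese curves in complementary subspaces, where $p$ or $q$ may be $0$ (a constant line) and parallelism forces $p=q$. Running through the options with $N=\rho+s+2\le 6$, namely $(\rho,s)\in\{(1,0),(1,1),(2,0),(2,2)\}$ (the remaining possibility $(2,1)$, i.e. $N=5$, is ruled out because the two columns would then have unequal degrees), leaves exactly the five configurations listed in (1)--(5); substituting each into the metric and second-fundamental-form formulas from the first step gives the stated values of $K$ and $\|B\|^2$. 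The main obstacle is the second step: converting the differential consequences of $\nabla B=0$ into the rigid algebraic statement that bounds every degree by $2$ and eliminates every ``mixed'' configuration, in particular controlling the $\underline{\phi}_j$-components of $B$ for $j\ge 2$, where the rank-two structure of $G(2,N;\mathbb{C})$ makes the computation considerably heavier than in the projective or quadric settings.
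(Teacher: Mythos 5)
Your outline reproduces the correct final list, but the step you yourself flag as ``the main obstacle'' is in fact the entire content of the theorem, and your proposal does not supply a mechanism for it. You assert that $\nabla B=0$ forces every $L_i$ to be a Veronese norm, caps the length of the chain, bounds each column's degree by $2$, and that ``among Veronese curves the second fundamental form is parallel only in degrees $n\le 2$.'' The last claim conflates the second fundamental form of a column curve inside its $\mathbb{C}P^n$ with the second fundamental form of $\phi$ inside $G(2,N;\mathbb{C})$; these are different tensors (indeed in case (1) the column $\underline{V}^{(2)}_1$ is not even holomorphic in $\mathbb{C}P^2$, and in case (3) the degree bound comes out as $n=2$ only after one computes $K=2$, $\|B\|^2=4$ for the Grassmannian immersion). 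The paper gets these bounds not from a Toda system but from the two pointwise algebraic identities of Lemma 3.1, $[A_{\overline z},[A_z,A_{\overline z}]]=M_1A_{\overline z}$ and $[[A_{\overline z},A_z],P]=M_2P$: in each structural subcase these immediately determine $M_1,M_2$ as explicit multiples of $\lambda^2$, hence constant $K$ and $\|B\|^2$, hence the degree via the Veronese metric formula, and only then does Calabi--Bolton rigidity identify the columns. Your plan never says where the analogous quantitative input would come from.

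There are two further concrete problems. First, your enumeration is internally inconsistent: you claim parallelism forces the two columns of an orthogonal direct sum to have equal degree ($p=q$), yet cases (2) and (3) of the statement are exactly direct sums with one constant column ($q=0\ne p$); and you exclude $(\rho,s)=(2,1)$ by ``unequal degrees'' while keeping the unequal-degree constant-column cases. Second, the structural dichotomy you start from --- every holomorphic $\phi$ is either an osculating flag of one Veronese curve or an orthogonal direct sum of two --- is itself a theorem requiring proof. The paper obtains it by splitting on $\mathrm{rank}\,\partial'\underline{\phi}$: for rank one it invokes the Burstall--Wood structure theorem (Lemma 3.3) to get Frenet pair versus mixed pair (the anti-holomorphic factor of a mixed pair being constant since $\phi$ is holomorphic), and for rank two it must \emph{derive} the orthogonal splitting $\langle\alpha_1,\beta\rangle=0$ from the $M_1$ identity and then, in the $N=6$ case, appeal to the classification of totally real constant-curvature spheres to pin down $\phi$. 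None of this is automatic, so as written the proposal is a description of the answer rather than a proof.
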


\begin{theorem}
Let $\phi: S^2\rightarrow G(2,N;\mathbb{C})$ be a  linearly full conformal minimal immersion with rank $\partial{'}\underline{\phi} = $  rank $\partial{''}\underline{\phi} = 1$,
 and let $K$ and $B$ be its Gauss
 curvature and second fundamental form respectively. If $B$ is
 parallel, then  $\phi$ belongs to one of the following
minimal immersions.
\\(1) up to $U(4)$ equivalence, $\phi$ is $\underline{V}^{(3)}_1\oplus \underline{V}^{(3)}_2: S^2 \rightarrow G(2,4;\mathbb{C})$ with $K=\frac{2}{3}$ and $\|B\|^2=\frac{8}{3}$;
\\(2) up to $U(3)$ equivalence, $\phi$ is $\underline{V}^{(2)}_0\oplus \underline{V}^{(2)}_2: S^2 \rightarrow G(2,3; \mathbb{C})$ with $K=1$ and $\|B\|^2=0$;
\\(3) up to $U(4)$ equivalence, $\phi$ is $\widehat{\underline{V}}^{(1)}_0\oplus \underline{V}^{(1)}_1: S^2 \rightarrow G(2,4; \mathbb{C})$ with $K=2$ and $\|B\|^2=0$;
\\(4) up to $U(4)$ equivalence, $\phi$ is $\underline{V}^{(3)}_0\oplus \underline{V}^{(3)}_3:S^2 \rightarrow G(2,4; \mathbb{C})$ with $K=\frac{2}{3}$ and $\|B\|^2=\frac{8}{3}$;
\\(5) up to $U(6)$ equivalence, $\phi$ is $\widehat{\underline{V}}^{(2)}_0\oplus \underline{V}^{(2)}_2: S^2 \rightarrow G(2,6; \mathbb{C})$ with $K=1$ and $\|B\|^2=2$;
\\(6) up to $U(4)$ equivalence, $\phi$ is  $\underline{V}_{1}^{(2)} \oplus \underline{c}_0: S^2 \rightarrow G(2,4; \mathbb{C})$ with $K=1$ and $\|B\|^2=0$, where $c_0 = (0,0,0,1)^T$;
\\(7) up to $U(6)$ equivalence, $\phi$ is $\underline{V}_{2}^{(4)} \oplus \underline{c}_0: S^2 \rightarrow G(2,6; \mathbb{C})$ with $K=\frac{1}{3}$ and $\|B\|^2=\frac{4}{3}$, where $c_0 = (0,0,0,0,0,1)^T$.
\end{theorem}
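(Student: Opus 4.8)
The plan is to prove the theorem in three stages, paralleling our treatment of the hyperquadric in \cite{JGA} and of $HP^n$ in \cite{HPn1}: first use the harmonic-sequence structure of $\phi$ to reduce to two orthogonal line subbundles; then exploit $\nabla B=0$ to force constant curvature on each; and finally match the resulting Veronese pieces against the embedding $\underline{\phi}\subset\mathbb{C}^N$ to extract the finite list. Because rank $\partial'\underline{\phi}=$ rank $\partial''\underline{\phi}=1$, the characterization of the harmonic sequence generated by $\phi$ established in the previous sections shows that $\underline{\phi}$ splits orthogonally as $\underline{\phi}=\underline{\phi}_1\oplus\underline{\phi}_2$, where each $\underline{\phi}_\mu$ is a line bundle that is either a term of the harmonic sequence of a linearly full holomorphic curve into some $\mathbb{C}P^{m_\mu}$, or the corresponding term of a conjugate sequence $\widehat{\underline{V}}$, or a constant line $\underline{c}_0$; moreover the locations of $\underline{\phi}_1,\underline{\phi}_2$ in their sequences are coupled through the two rank conditions. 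I would choose local unitary frames adapted to this splitting and write out the structure equations, obtaining local formulas for the induced metric, the Gauss curvature $K$, and $\|B\|^2$ in terms of the functions $\ell_k=\log\|\partial'(\cdot)_k\|^2$ attached to each sequence together with the off-diagonal quantities recording how the two summands interact under $\partial'$ and $\partial''$.

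The technical heart is to feed the hypothesis $\nabla B=0$ into these equations. Differentiating the components of $B$ and setting the covariant derivative to zero produces a closed system of Laplace-type equations $\partial\bar{\partial}\ell_k=c_ke^{\ell_k}+(\text{lower-order terms})$ for the auxiliary functions together with algebraic identities forcing the off-diagonal quantities to be parallel as well; by the same reasoning as in \cite{JGA,HPn1}, the only solutions that extend smoothly over all of $S^2$ are those for which every $\ell_k$ yields a metric of constant Gauss curvature on the corresponding factor. Hence $K$ is constant and each summand $\underline{\phi}_\mu$ is a harmonic map of $S^2$ into a projective space of constant curvature; by the classification of such maps (Bolton--Jensen--Rigoli--Woodward, Bando--Ohnita) each $\underline{\phi}_\mu$ is, up to a unitary change of coordinates, a term $\underline{V}^{(p_\mu)}_{k_\mu}$ of the degree-$p_\mu$ Veronese sequence, or $\widehat{\underline{V}}^{(p_\mu)}_{k_\mu}$, or a constant line.

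It then remains to decide which such pairs assemble into a linearly full conformal minimal immersion $\phi:S^2\to G(2,N;\mathbb{C})$ with $B$ parallel. Here the rank conditions on $\partial'\underline{\phi},\partial''\underline{\phi}$ translate into numerical relations among $p_1,k_1,p_2,k_2$; linear fullness and the ambient dimension tie $N$ to these integers; and the requirement that the mixed second fundamental form between the two factors — not merely the intrinsic data of each factor — be parallel kills all but finitely many configurations. For each surviving configuration I would compute $K$ and $\|B\|^2$ directly from the Veronese data to recover items (1)--(7), treating the degenerate sub-case $\underline{\phi}_2=\underline{c}_0$ (items (6),(7)) separately by passing to the totally geodesic sub-Grassmannian $\mathbb{C}P^{N-2}\hookrightarrow G(2,N;\mathbb{C})$ and invoking the classification of constant-curvature minimal two-spheres in $\mathbb{C}P^{N-2}$ with parallel second fundamental form. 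The step I expect to be the main obstacle is the middle one: proving that $\nabla B=0$ forces \emph{every} auxiliary function to be constant — in particular controlling the off-diagonal interaction terms, which carry no a priori constancy — and then enumerating the admissible $(p_\mu,k_\mu)$ carefully enough that no case is missed or counted twice.
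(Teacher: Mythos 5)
Your three-stage architecture (split $\underline{\phi}$ into two line subbundles, force constant curvature, enumerate Veronese configurations) matches the paper's Section 4, but the mechanism you propose for the decisive middle stage is not the one that works, and as you describe it the step would fail. You argue that $\nabla B=0$ yields a Laplace-type system for the functions $\ell_k$ whose only solutions extending smoothly over $S^2$ give constant curvature on each factor. That global-rigidity claim is false in this generality: the unintegrated Pl\"ucker formulae already form exactly such a closed elliptic system for \emph{every} harmonic sequence in $\mathbb{C}P^n$, and non-constant-curvature harmonic two-spheres abound, so smooth extendability alone cannot force constancy. The paper instead exploits the two pointwise \emph{algebraic} identities of Lemma 3.1, $[A_{\overline{z}},[A_z,A_{\overline{z}}]]=M_1A_{\overline{z}}$ and $[[A_{\overline{z}},A_z],P]=M_2P$ with $M_1=-\lambda^2(2K+\|B\|^2)/4$ and $M_2=\lambda^2(K-\|B\|^2/4)$: expanding both sides in the independent rank-one operators $f_if_{i+1}^{*}$, $g_{j-1}g_j^{*}$, etc., and matching coefficients forces exact proportionalities such as $M_1=-2|f_{i+1}|^2/|f_i|^2=-2|f_{i-1}|^2/|f_{i-2}|^2$ and $M_2=0$; since $K$ (hence $\|B\|^2$) is already known to be constant for parallel $B$, these pin down both the curvature values and the positions in the sequence ($2i=n+1$, then $i=2$, $n=3$ in the Frenet case, and the analogous relations in Lemmas 4.3 and 4.4). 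Your plan contains no substitute for this computation, and you yourself flag it as the expected obstacle.

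A second, smaller gap: the orthogonal splitting you take as your starting point is not a formal consequence of the rank conditions. Writing $\underline{\phi}_1=\underline{f}_{i+1}$ and $\underline{\phi}_{-1}=\underline{g}_{j-1}$, the two distinguished sections $f_i$ and $g_j$ of $\underline{\phi}$ need not be orthogonal for a general harmonic map with rank $\partial'\underline{\phi}=$ rank $\partial''\underline{\phi}=1$; Proposition 4.1 derives $\langle f_i,g_j\rangle=0$ from the same identity $[A_{\overline{z}},[A_z,A_{\overline{z}}]]f_{i+1}=M_1A_{\overline{z}}f_{i+1}$, i.e.\ from the parallel hypothesis, and only then does the Burstall--Wood dichotomy (Lemma 3.3) deliver the trichotomy Frenet pair / mixed pair / $\underline{f}_i\oplus\underline{c}_0$. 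Once these two points are repaired, the remainder of your plan (Bolton et al.\ rigidity applied to each constant-curvature factor, enumeration of admissible configurations, separate treatment of the $\underline{c}_0$ case) does track Lemmas 4.2--4.4 of the paper.
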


\begin{theorem}
Let $\phi: S^2\rightarrow G(2,N;\mathbb{C})$ be a  linearly full conformal minimal immersion with rank $\partial{'}\underline{\phi} = 1$ and  rank $\partial{''}\underline{\phi} = 2$,
  let $K$ and $B$ be its Gauss
 curvature and second fundamental form respectively. If $B$ is
 parallel, then  $\phi$  belongs to one of the following
minimal immersions.
\\(1) up to $U(4)$ equivalence, $\phi$ is  $\underline{V}_{1}^{(3)} \oplus \underline{V}_{3}^{(3)}: S^2 \rightarrow G(2,4; \mathbb{C})$ with $K=\frac{2}{5}$ and $\|B\|^2=0$;
\\(2) up to $U(5)$ equivalence, $\phi$ is $\widehat{\underline{V}}^{(1)}_1\oplus \underline{V}^{(2)}_1: S^2 \rightarrow G(2,5; \mathbb{C})$ with $K=\frac{4}{5}$ and $\|B\|^2=0$;
\\(3) up to $U(6)$ equivalence, $\phi$ is $\underline{V}^{(4)}_3\oplus \underline{\alpha}: S^2 \rightarrow G(2,6; \mathbb{C})$ with $K=\frac{2}{5},  \    \|B\|^2=\frac{4}{5}$, 
where $\alpha = ({V_{2}^{(4)}}^T, \sqrt{48}e^{\sqrt{-1}\theta})^T$ for some constant $\theta$.
\end{theorem}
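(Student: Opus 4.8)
The plan is to follow the scheme already used for cases (I) and (II): first reduce $\phi$ to a normal form built from its harmonic sequence, then convert the hypothesis $\nabla B=0$ into an overdetermined system that forces the local invariants of that sequence to be constant, and finally apply the rigidity of constant curvature harmonic two-spheres in complex projective space to identify each piece. Since here the $\partial{'}$-rank equals $1$ while the $\partial{''}$-rank equals $2$, the map $\phi$ is neither $\pm$-holomorphic and its two ``sides'' are structurally different, so the first task is to see how this asymmetry constrains the harmonic sequence generated by $\phi$. Using the characterization of that sequence obtained earlier in the paper, I would produce an orthogonal splitting $\underline{\phi}=\underline{L}\oplus\underline{\psi}$ in which the rank-one $\partial{'}$-data is carried by the line subbundle $\underline{L}$ (so that $\underline{L}$ propagates along a short $\partial{'}$-chain), while $\underline{\psi}$ together with its $\partial{''}$-images fills out a longer $\partial{''}$-chain; equivalently, $\phi$ occupies a prescribed pair of positions inside a harmonic sequence $\{\underline{f}_k\}$ on $\mathbb{C}P^{N-1}$. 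Fixing a local unitary frame $e_1,e_2$ of $\underline{\phi}$ adapted to this splitting, I would then express the metric, the Gauss curvature $K$, and the second fundamental form $B$ in terms of the scalar ``degree'' functions $\ell_0,\ell_1,\dots$ that appear in the Frenet equations of $\{\underline{f}_k\}$.

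Next I would compute $\nabla B$ in this frame and impose $\nabla B=0$. The resulting equations split into diagonal ones, of the shape $\partial{'}\partial{''}\log\ell_i=$ (a linear combination of the $\ell_j$ with constant coefficients), which, being globally defined on the compact surface $S^2$, integrate to show that every nonzero $\ell_i$ is a constant --- hence $K$ is constant and each summand of $\phi$ is a harmonic map of $S^2$ into a complex projective space of constant curvature --- and mixed ones, coupling the $\partial{'}$-chain to the $\partial{''}$-chain, which force the lengths of the two chains, together with the constant along which $\underline{L}$ is adjoined, to satisfy a small set of rigid numerical relations.

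With constancy in hand, the rigidity of linearly full constant curvature harmonic two-spheres in $\mathbb{C}P^n$ (every such map is, up to a unitary transformation, an element $\underline{V}^{(p)}_k$ or $\widehat{\underline{V}}^{(p)}_k$ of a Veronese-type harmonic sequence, or a constant) identifies every summand. Matching curvatures, bundle ranks and linear fullness against the numerical relations of the previous step leaves only finitely many admissible triples $(p,k,N)$; substituting the known formulas for $K$ and $\|B\|^2$ of Veronese maps then yields precisely the three families (1)--(3). Family (3) requires extra care: its second summand is not a Veronese map but the twisted vector $\underline{\alpha}=({V_{2}^{(4)}}^T,\sqrt{48}\,e^{\sqrt{-1}\theta})^T$, so one must check that adjoining the additional constant coordinate is compatible with $\nabla B=0$ only for this value of the coefficient, and then compute $K=\frac{2}{5}$, $\|B\|^2=\frac{4}{5}$.

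The main obstacle is the middle step: the $\nabla B=0$ system must be organized so that constancy of all the $\ell_i$ is obtained \emph{without} assuming in advance that $K$ is constant, and so that the finite list of compatibility relations between a rank-one $\partial{'}$-chain and a rank-two $\partial{''}$-chain can be read off cleanly. Once that system is solved the rigidity input is routine, but turning it into a genuinely finite classification --- in particular bounding $N$ --- depends on the fact that a linearly full constant curvature harmonic $S^2$ in $\mathbb{C}P^n$ sits at a bounded position within its Veronese sequence.
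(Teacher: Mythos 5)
Your overall strategy (normal form from the harmonic sequence, convert $\nabla B=0$ into an algebraic system, finish with Bolton--Jensen--Rigoli--Woodward rigidity) matches the paper's in outline, but there is a genuine gap at the structural step on which everything else rests. You assert that the splitting $\underline{\phi}=\underline{L}\oplus\underline{\psi}$ places $\phi$ at ``a prescribed pair of positions inside a harmonic sequence $\{\underline{f}_k\}$ on $\mathbb{C}P^{N-1}$.'' That is exactly what fails when rank\,$\partial''\underline{\phi}=2$: unlike the rank-one/rank-one case, where Proposition~4.1 of the paper shows $\phi$ must be a Frenet pair, a mixed pair, or $\underline{f}_i\oplus\underline{c}_0$, here the complementary summand $\underline{\alpha}$ of $\underline{f}_i$ in $\underline{\phi}$ is a priori an arbitrary section, and in the third family of the theorem it turns out to be the twisted vector $\alpha=f_2+x_5c_0$ with $|x_5|^2=48$ --- not a term of any single Frenet sequence. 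The paper can only identify $\underline{\alpha}$ by applying the Burstall--Wood backward-replacement lemma three times in succession ($\underline{\phi}^{(1)}=\underline{\alpha}\oplus\underline{\beta}$, then $\underline{\phi}^{(2)}$, then $\underline{\phi}^{(3)}$, the last of which is finally a mixed pair), extracting the relation $\langle\beta,f_1\rangle=0$ along the way to kill the spurious branches. Your proposal contains no substitute for this mechanism, and without it you cannot even write down the candidate maps in family (3), let alone verify $|x_5|^2=48$.

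The second weak point is the one you yourself flag: obtaining constancy of the invariants. Your proposed route --- diagonal equations of the shape $\partial\overline{\partial}\log\ell_i=$ (linear combination of the $\ell_j$) integrated over $S^2$ --- is not what the hypothesis delivers and is not needed. Parallelism of $B$ already forces $K$ to be constant (this is the standing fact invoked at the start of Section~3, via Lemma~3.1), and the pointwise identities $[A_{\overline z},[A_z,A_{\overline z}]]=M_1A_{\overline z}$ and $[[A_{\overline z},A_z],P]=M_2P$ then yield algebraic relations such as $\lambda^2=\frac{5|f_{i+1}|^2}{2|f_i|^2}$, $\langle\alpha_{-1},\beta\rangle=0$, $\langle\beta,f_{i+1}\rangle=0$, which both pin down $K$ and $\|B\|^2$ numerically and show that $\underline{f}_i$ has constant curvature. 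The case division that actually produces the three families ($\underline{\alpha}_{-1}=\underline{f}_{i+1}$ or not; $\underline{\beta}=\underline{f}_{i-1}$ or not; and within these, $M_2=0$ versus $M_2\neq0$ and $n=i+1$ versus $n\geq i+2$, with two subcases eliminated by explicit computation of $\|B\|^2$) comes out of these relations, not out of an a priori bound on the position in a Veronese sequence. As written, your plan would recover family (1) and plausibly family (2), but it does not contain the ideas needed to produce or to rule out anything else.
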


\begin{theorem}
Let $\phi: S^2\rightarrow G(2,N;\mathbb{C})$ be a $\partial^{'}$-irreducible and $\partial^{''}$-irreducible linearly full conformal minimal immersion,and let $K$ and $B$ be its Gauss
 curvature and second fundamental form respectively. If $B$ is
 parallel, then  $\phi$ belongs to one of the following
minimal immersions.
\\(1) up to $U(5)$ equivalence, $\phi$ is $\underline{V}^{(4)}_1\oplus \underline{V}^{(4)}_3: S^2 \rightarrow G(2,5; \mathbb{C})$  with $K=\frac{1}{5}$ and $\|B\|^2=0$;
\\(2) up to $U(6)$ equivalence, $\phi$ is $\widehat{\underline{V}}^{(2)}_1\oplus \underline{V}^{(2)}_1: S^2 \rightarrow G(2,6; \mathbb{C})$  with $K=\frac{1}{2}$ and $\|B\|^2=0$;
\\(3) up to $U(10)$ equivalence, $\phi$ is $\widehat{\underline{V}}^{(4)}_2\oplus \underline{V}^{(4)}_2: S^2 \rightarrow G(2,10; \mathbb{C})$  with $K=\frac{1}{6}$ and $\|B\|^2=\frac{2}{3}$.
\end{theorem}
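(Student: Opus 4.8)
The plan is to exploit the harmonic sequence machinery already set up for $\phi:S^2\to G(2,N;\mathbb C)$ together with the algebraic constraints that parallelism of $B$ imposes. Since $\phi$ is both $\partial'$-irreducible and $\partial''$-irreducible, one obtains from it (as in the earlier sections) a doubly-infinite harmonic sequence of maps into Grassmannians, and the linearly full hypothesis forces this sequence to be finite; the two boundary maps $\underline f_0$ and $\underline f_{\text{end}}$ of the associated harmonic sequence on $\mathbb C^N$ are then holomorphic (resp. antiholomorphic) curves in projective spaces, hence, by Calabi's rigidity, pieces of Veronese curves $\underline V^{(k)}_r$. The first step is therefore to write $\phi = \underline\psi_1\oplus\underline\psi_2$ in terms of two lines generated by consecutive elements of the sequence, and to record the formulas for $K$ and for $\|B\|^2$ (equivalently for the second fundamental form's $(2,0)$, $(1,1)$ and $(0,2)$ parts) in terms of the metrics $\ell_i = $ the $\partial'\partial''\log$ data of the Veronese pieces.

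Next I would impose $\nabla B = 0$. Because each line in the decomposition is (a unitary transform of) a Veronese curve, its metric is $\ell^{(k)}_r = \frac{(r+1)(k-r)+ \cdots}{(1+|z|^2)^2}$ type, and the curvature $K$ and the norms of the components of $B$ become explicit rational expressions in the integers $k$, $r$ (degree and position in the Veronese orbit) and possibly one extra mixing parameter. Parallelism of $B$ forces $K$ to be constant — which is automatic here since $S^2$ has constant curvature metric once $\phi$ is minimal and the induced metric is a constant multiple of the Veronese one — but more importantly it forces the mutual "coupling" terms between $\underline\psi_1$ and $\underline\psi_2$ to vanish or to be rigidly determined, and it forces relations among $k_1,r_1,k_2,r_2$. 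I would extract these Diophantine relations from $\nabla B=0$ exactly as in the $Q_n$ and $HP^n$ papers: the identity $\nabla'' B'_{zz}=0$ (and its conjugate and the mixed identity) yield, after contracting with the relevant frame vectors, polynomial equations in the Veronese degrees. Solving this finite system gives only the three listed solutions, namely $(\underline V^{(4)}_1\oplus\underline V^{(4)}_3, N=5)$, $(\widehat{\underline V}^{(2)}_1\oplus\underline V^{(2)}_1, N=6)$, and $(\widehat{\underline V}^{(4)}_2\oplus\underline V^{(4)}_2, N=10)$, with the stated $K$ and $\|B\|^2$; in each case one then checks $B$ is genuinely parallel (the necessary conditions turn out to be sufficient) and reads off the rigidity up to $U(N)$ from Calabi rigidity for the Veronese pieces plus the freedom in choosing the unitary frame.

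The main obstacle I anticipate is \emph{organizing the case analysis for the mixing terms}: when both $\partial'$ and $\partial''$ have rank $2$, the second fundamental form has several independent components (the $\partial'$-derivative of $\phi$ splits into two line bundles, similarly $\partial''$), and $\nabla B=0$ is not a single scalar equation but a system coupling the holomorphic data of $\underline\psi_1$, that of $\underline\psi_2$, and their relative position inside $\mathbb C^N$. One must rule out the \emph{a priori} possibility that the two Veronese pieces sit in "generic" relative position with a nontrivial continuous parameter surviving; the key is to show that parallelism collapses this parameter (either forcing orthogonality of the spanning subspaces, as in cases (1),(2), or pinning the parameter to a specific value, as happens via the $\sqrt{48}e^{\sqrt{-1}\theta}$-type normalization seen in Theorem 1.3). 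Handling this cleanly requires the structure equations for the harmonic sequence (the $\partial'$, $\partial''$ "step" operators and their integrability conditions) and careful bookkeeping of which $\|B\|$-components the parallelism kills; once that is in place, the residual computation is the routine Veronese-metric algebra.
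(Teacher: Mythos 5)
Your proposal has a genuine gap at its very first step. You propose to write $\underline{\phi}=\underline{\psi}_1\oplus\underline{\psi}_2$ as a sum of two line bundles generated by consecutive elements of harmonic sequences in projective spaces, so that Calabi/Bolton rigidity applies to each Veronese piece and $\nabla B=0$ becomes a Diophantine system in the Veronese degrees. But that decomposition is exactly what is \emph{not} available in the doubly irreducible case: the Burstall--Wood structure results (Lemma 3.3 and its variants) that produce Frenet pairs, mixed pairs, or $\underline{f}_i\oplus\underline{c}_0$ decompositions all require $\partial'\underline{\phi}$ (or $\partial''\underline{\phi}$) to have rank one, which is the hypothesis of Sections 4 and 5. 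Here rank $\partial'\underline{\phi}=$ rank $\partial''\underline{\phi}=2$, the neighbours $\underline{\phi}_{\pm1}$ are rank-two bundles, and there is no a priori reduction to lines lying on Veronese curves; assuming it begs the question. (Your remark that constancy of $K$ is ``automatic'' because the induced metric is a constant multiple of a Veronese metric is circular for the same reason.) You also never identify the invariant that actually organizes the case analysis, namely the isotropy order $r$ of $\phi$.

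The paper's argument runs along entirely different lines. It splits into $r=1$ and $r\geq 2$. In both cases it writes $A_{\overline z}$ in terms of the frame matrices $W_{-1},W_0,W_1$ and the step matrices $\Omega_{-1},\Omega_0$ of \eqref{eq:2.4}, and feeds the single identity $[A_{\overline z},[A_z,A_{\overline z}]]=M_1A_{\overline z}$ from Lemma 3.1 into a direct matrix computation. For $r=1$ the overlap $\underline{\phi}_{-1}\cap\underline{\phi}_1=\underline{e}_0$ gives $W_1^*W_{-1}=\mathrm{diag}(1,0)$ and an entrywise analysis of \eqref{eq:6.6}--\eqref{eq:6.13} forces $\nu=x=w=0$ and $|y|^2=|\mu|^2$, $|z|^2=|\lambda|^2$; for $r\geq2$ orthogonality gives $W_1^*W_{-1}=0$ and the identity immediately yields $M_1I=-2\Omega_{-1}\Omega_{-1}^*+\Omega_0^*\Omega_0$ and its counterpart. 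Either way one concludes $L_{-1}=L_0$, i.e.\ $\phi$ is \emph{totally real}, and the three listed immersions are then read off from the already-known classifications of totally real minimal two-spheres with parallel second fundamental form (the hyperquadric and $HP^n$ papers and Berndt's work). Without the reduction to the totally real case — or some substitute for it — your plan has no mechanism for actually producing the finite list, so as it stands the proposal does not constitute a proof.
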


In  these four theorems, $\underline{V}^{(n)}_i$ denotes the linearly full Veronese surface in $\mathbb{C}P^n$, its standard expression is given in Section 2 below. If $n\leq N-2$, for one thing, we add zeros to the  end of  $V^{(n)}_i$ such that it belongs to $\mathbb{C}^N$, in the absence of confusion, we also denote it by $V^{(n)}_i$; for another, we denote it as $\widehat{V}^{(n)}_i$ by adding zeros to the front of $V^{(n)}_i$ such that it belongs to $\mathbb{C}^N$ also.

Theorems 1.1-1.4 give the classification of all linearly full conformal minimal immersions from
$S^2$ to $G(2,N; \mathbb{C})$ with parallel second fundamental form, no two of the above
eighteen cases are congruent, i.e. there is no isometry of
$G(2,N;\mathbb{C})$ such that it transforms one case into another.  Furthermore, the Gauss curvatures that occur in Theorems 1.1-1.4 obey Delisle, Hussin and Zakrzewski's conjecture \cite{D-H-Z1, D-H-Z2}.

\section{Preliminaries}\label{sec2}

Let $M, ds_M^2$  be a simply connected domain in the unit sphere $S^2$ with conformal metric $ds_M^2=dzd\overline z$,  and $(z, \overline{z})$ be a complex coordinate on $M$. Denote
$$A_z=\frac{1}{2}s^{-1}\partial s, \quad A_{\overline{z}}=\frac{1}{2}s^{-1}\overline{\partial} s,$$
where $s$ is a smooth map from $M$ to the unitary group $U(N)$, $\partial = \frac {\partial}{\partial z}, \ \overline{\partial}
= \frac {\partial}{\partial \overline{z}}$. Then $s$ is a harmonic map if and only if it satisfies the following equation (cf. \cite{Uhlenbeck}):
\begin{equation}\overline{\partial} A_z=[A_z, A_{\overline{z}}]. \label{eq:2.1}
\end{equation}

 Suppose that $s:S^2 \rightarrow U(N)$ is an isometric immersion, then $s$ is conformal and minimal if it is harmonic. Let $\omega$ be the Maurer-Cartan form on $U(N)$, and let $ds_{U(N)}^2=\frac{1}{8}tr\omega\omega^{*}$ be the metric on $U(N)$. Then the metric induced by $s$ on $S^2$ is locally given by
$$ds^2=-tr A_zA_{\overline{z}}dzd\overline{z}.$$

We consider the complex Grassmann manifold  $G(2,N;\mathbb{C})$ as
the set of Hermitian orthogonal projections from $\mathbb{C}^N$ onto
a $2$-dimensional subspace in $\mathbb{C}^N$. Then map $\phi:
M\rightarrow G(2,N;\mathbb{C})$ is a Hermitian orthogonal projection
onto a $2$-dimensional subbundle $\underline{\phi}$ of the
trivial bundle $\underline{\mathbb{C}}^N = M \times \mathbb{C}^N$
given by setting the fibre $\underline{\phi}_x = \phi (x)$ for
all $x\in M$.  $\underline{\phi}$ is called (a)
\emph{harmonic ((sub-) bundle) } whenever $\phi$ is a harmonic
map. Here $s=\phi-\phi^{\bot}$ is a map from $S^2$ into $U(N)$. It is well known that $\phi$ is harmonic if and only if $s$ is harmonic.
$\phi$ is a \emph{holomorphic (resp. anti-holomorphic) curve}  in $G(2,N;\mathbb{C})$ if and only if $\phi^{\bot}\overline{\partial}\phi=0$ (resp. $\phi^{\bot}{\partial}\phi=0$).

For a conformal minimal immersion $\phi: S^2 \rightarrow
G(2,N;\mathbb{C})$, two harmonic sequences are derived as follows (cf. \cite{Wolfson}):
\begin{equation}\underline{\phi}=\underline{\phi}_{0}\stackrel{\partial{'}}
{\longrightarrow }\underline{\phi}_1\stackrel{\partial{'}}
{\longrightarrow}\cdots\stackrel{\partial{'}}
{\longrightarrow}\underline{\phi}_ {i}\stackrel{\partial{'}}
{\longrightarrow}\cdots\stackrel{\partial{'}}
{\longrightarrow}0, \label{eq:2.2}
\end{equation}
\begin{equation}\underline{\phi}=\underline{\phi}_{0}\stackrel{\partial{''}}
{\longrightarrow}\underline{\phi}_{-1}\stackrel{\partial{''}}
{\longrightarrow}\cdots\stackrel{\partial{''}} {\longrightarrow}
\underline{\phi}_{-i}\stackrel{\partial{''}}{\longrightarrow}\cdots\stackrel{\partial{''}}{\longrightarrow}0,
\label{eq:2.3}
\end{equation} where $\underline{\phi}_{i} = \partial ^\prime
\underline{\phi} _{i-1}$ and $\underline{\phi}_{-i}=
\partial ^{ \prime \prime} \underline{\phi} _{-i+1} $ are Hermitian
orthogonal projections from $S^2 \times \mathbb{C} ^N$ onto
${\underline{Im}}\left(\phi^{\perp}_ {i-1}\partial
\phi_{i-1} \right)$ and
${\underline{Im}}\left(\phi^{\perp}_
{-i+1}\overline{\partial} \phi_ {-i+1}\right)$
respectively, $i=1,2,\ldots$.

Now recall (\cite{Burstall}, \S3A) that a harmonic map $\phi: S^2 \rightarrow
G(2,N;\mathbb{C})$ in \eqref{eq:2.2}(resp. \eqref{eq:2.3}) is said to be
\emph{$\partial^{'}$-irreducible} (resp.
\emph{$\partial^{''}$-irreducible}) if
rank $\underline{\phi}_1$ = rank $\underline{\phi}$ (resp.
rank $\underline{\phi}_{-1}$ = rank $\underline{\phi})$ and \emph{$\partial^{'}$-reducible}
(resp. \emph{$\partial^{''}$-reducible}) otherwise.

For an arbitrary harmonic map $\phi: S^2 \rightarrow
G(2,N;\mathbb{C})$, we know that several consecutive harmonic maps in \eqref{eq:2.2} are not mutually orthogonal generally. So it is meaningful to define the \emph{isotropy order} (cf. \cite{Burstall}) of $\phi$ to be the greatest integer $r$ such that 
$\phi \bot \phi_{i}$ for  $1\leq i \leq r$.

As in \cite{Erdem} call a harmonic map $\phi: S^2 \rightarrow
G(2,N;\mathbb{C})$ \emph{(strongly) isotropic} if $\phi$ has isotropy order  $\geq r$ for all   $r$.
In this case we just set $r= \infty$.

\begin{definition}
\emph{Let $\phi: S^2 \rightarrow G(2,N;\mathbb{C})$ be a map.
$\phi$ is}  linearly full \emph{if $\underline{\phi}$ can not
be contained in any proper trivial subbundle $S^2 \times
\mathbb{C}^n$ of $S^2\times { \mathbb{C}}^N$ ($n< N$)}.
\end{definition}

In this paper, we always assume that $\phi$ is linearly full.

~\

Suppose that $\phi: S^2 \rightarrow G(2,N;\mathbb{C})$ is a
linearly full harmonic map and it belongs to the following harmonic
sequence:
\begin{equation}\underline{\phi}_0\stackrel{\partial{'}}
{\longrightarrow}\cdots \stackrel{\partial{'}} {\longrightarrow}
\underline{\phi}= \underline{\phi}_{i} \stackrel {\partial{'}}
{\longrightarrow} \underline{\phi}_{i+1}\stackrel{\partial{'}}
{\longrightarrow}\cdots \stackrel{\partial{'}} {\longrightarrow}
\underline{\phi}_{i_0}\stackrel{\partial{'}} {\longrightarrow}
0\label{eq:2.11}\end{equation} for some $i = 0, \ldots, i_0$. We choose  local orthonormal  frames $e^{(i)}_1, e^{(i)}_2, \ldots, e^{(i)}_{k_{i}}$
such that they locally span subbundle $\underline{\phi}_{i}$ of
$S^2 \times \mathbb{C}^N$, where $k_i = $ rank
$\underline{\phi}_{i}$.

Let $W_{i}=\left(e^{(i)}_1, e^{(i)}_2, \ldots,
e^{(i)}_{k_{i}}\right)$ be an $\left(N\times k_{i}\right)$-matrix. Then
we have $$\phi_i=W_{i}W^*_{i},\quad
W^*_{i}W_{i} = I_{k_{i}\times k_{i}}, \quad W^*_{i-1}W_{i} =0,\quad
W^*_{i+1} W_{i}=0.$$ By these equations, a
straightforward computation shows that \begin{equation} \left\{
\begin{array}{l} \pa W_{i} = W_{i+1} \Om_{i} +
W_{i} \Psi_{i}, \\
\ov{\pa}W_{i} =-W_{i-1} \Om^*_ {i-1}- W_{i}\Psi^*_{i},
\end{array}\right.  \label{eq:2.4}\end{equation}
where $\Omega_{i}$ is a $\left(k_{i+1} \times k_{i}\right)$-matrix,  $\Psi_{i}$ is a $\left(k_{i}\times k_{i}\right)$-matrix for
$i=0, 1, 2, \ldots, i_0$, and $\Omega_{i_0}=0$.
It is very evident that integrability conditions for  \eqref{eq:2.4} are
$$\overline{\partial}\Omega_i=\Psi_{i+1}^{*}\Omega_i-\Omega_i\Psi_{i}^{*},$$
$$\overline{\partial}\Psi_i+\partial\Psi_{i}^{*}=\Omega_i^*\Omega_i+\Psi_i^*\Psi_i-\Omega_{i-1}\Omega_{i-1}^{*}-\Psi_i\Psi_i^*.$$

For a conformal immersion $\phi: M\rightarrow G(2,N;\mathbb{C})$, we define its \emph{K\"{a}hler angle} to be the function $\theta: M\rightarrow [0, \pi]$ given in terms of a complex coordinate $z$ on $M$ by (cf. \cite{Bolton, Chern})
$$\tan\frac{\theta(p)}{2}=\frac{|d\phi(\partial/\partial \overline{z})|}{|d\phi(\partial/\partial z)|}, \quad p\in M.$$
It is clear that $\theta$ is globally defined and is smooth at $p$ unless $\theta(p)=0$ or $\pi$. $\phi$ is holomorphic (resp. anti-holomorphic) if and only if $\theta(p)=0$ (resp. $\theta(p)=\pi$) for all $p\in M$, while $\phi$ is totally real if and only if $\theta(p)=\frac{\pi}{2}$ for all $p\in M$.

Let $\phi: S^2\rightarrow G(2,N;\mathbb{C})$ be a conformal minimal immersion with the harmonic sequence \eqref{eq:2.11}, put $L_{i} =\textrm{tr} (\Omega_{i} \Omega^*_{i})$, then, in terms of a local complex coordinate $z$, its K\"{a}hler angle $\theta_i$ satisfies
$$(\tan\frac{\theta_i}{2})^2=\frac{L_{i-1}}{L_i}.$$
The metric induced by $\phi$ is
given in the form  \begin{equation}ds^2 _{i} =
(L_{i-1}+L_{i})dzd\overline{z}\triangleq \lambda^2dzd\overline{z}. \label{eq:2.5}\end{equation}
Let $K$ and $B$ be the Gauss curvature and second fundamental form of $\phi$
respectively, then we have \begin{equation} \left\{
\begin{array}{l}
K = -\frac{2}{L_{i-1}+L_{i}}\partial\overline{\partial} \log {(L_{i-1}+L_{i})},
\\ \|B\|^2 = 4\textrm{tr}PP^{\ast},
\end{array}\right.  \label{eq:2.6}\end{equation}
where $P=\partial\left(\frac{A_z}{\lambda^2}\right)$ with $A_{z}=(2\phi-I)\pa \phi$, $I$ is
the identity matrix (cf. \cite{Jiao2008, Jiao2012}).

 In the following, we review the rigidity theorem of conformal
minimal immersions with constant curvature from $S^2$ to
$\mathbb{C}P^N$.

 Let $\psi:S^2 \rightarrow \mathbb{C}P^N$ be a linearly full
conformal minimal immersion, a harmonic sequence is derived as
follows \begin{equation}0\stackrel{\partial^\prime}{\longrightarrow}
\underline{\psi} _0
\stackrel{\partial^\prime}{\longrightarrow}\cdots
\stackrel{\partial^\prime}{\longrightarrow} \underline{\psi}
=\underline{\psi}_i \stackrel{\partial^\prime}{\longrightarrow}
\cdots \stackrel{\partial^\prime}{\longrightarrow} \underline{\psi}
_N \stackrel{\partial^\prime}{\longrightarrow} 0
\label{eq:2.7}\end{equation} for some $i =0, 1, \ldots,N$.

We define a sequence $f_0, \ldots, f_N$ be local sections of
$\underline{\psi}_0, \ldots , \underline{\psi}_N$ inductively such
that $f_0$ is a nowhere zero local section of $\underline{\psi}_0$
(without loss of generality, assume that $\overline{\partial} f_0
\equiv 0$) and $f_{i+1} = \psi^\perp_{i}(
\partial f_{i}) $ for $i=0, \ldots,N-1$. Then we have
some formulae as follows (cf. \cite{Bolton}): \begin{equation}\partial f_i = f_{i+1} +
\frac{\langle\partial f_i,f_i\rangle}{|f_i|^2}f_i, \ i= 0,\ldots, N-1,
\label{eq:2.8}\end{equation}
\begin{equation} \overline{\partial}
f_i= - \frac{|f_i|^2}{|f_{i-1}|^2} f_{i-1}, \ i = 1, \ldots, N.
\label{eq:2.9}\end{equation}
\begin{equation} \partial\overline{\partial}
\log |f_i|^2= l_i-l_{i-1}, \ i = 0, \ldots, N.
\label{2.11}\end{equation}
\begin{equation} \partial\overline{\partial}
\log l_i= l_{i+1}-2l_i+l_{i-1}, \ i = 0, \ldots, N-1,
\label{2.12}\end{equation}
where $l_i=\frac{|f_{i+1}|^2}{|f_{i}|^2}$ for $i=0,...,N-1$, and $l_{-1}=l_N=0$.

Next, we state the definition of degree of a smooth map $\psi$ from a compact Riemann surface $M$ into $G(k, N; \mathbb{C})$ as follows.

\begin{definition}[{\cite{Burstall}}]
\emph{The degree of $\psi$, denoted by $\deg\psi$ is the degree of the induced map $\psi^*: H^2(G(k, N; \mathbb{C}), Z)\cong Z\rightarrow H^2(M, Z)\cong Z$ on second cohomology.} 
\end{definition}

In \eqref{eq:2.7}, let $F_i=f_0\wedge f_1\wedge\cdots\wedge f_i$ be a local lift of the $i$-th osculating curve, where $i=0, \cdots, N$. We write $F_i=f(z)\tilde F_i$, where $f(z)$ is the greatest common divisor of  the $\binom{N+1}{i+1}$ components of $F_i$. Then $\tilde F_i$ is a nowhere zero holomorphic curve, and the degree $\delta^{(N)}_i$ of $F_i$ is given by $\delta^{(N)}_i=1/(2\pi\sqrt{-1})\int_{S^2}\partial\overline{\partial}\log|F_i|^2d\overline z\wedge dz$, which is equal to the degree of the polynomial function $\tilde F_i$. Then we have
$$\delta^{(N)}_i=\frac{1}{2\pi\sqrt{-1}}\int_{S^2}l_i d\overline z\wedge dz.$$

Especially, for harmonic sequence \eqref{eq:2.7}. Let $r(\partial')=$ sum of the indices of the singularities of $\partial'$, which is called the \emph{ramification index} of $\partial'$ by Bolton et al (cf. \cite{Bolton}). Note that if $r(\partial')=0$ in \eqref{eq:2.7} for all $\partial'$, the harmonic sequence is defined \emph{totally unramified} in \cite{Bolton}.
If \eqref{eq:2.7} is a totally unramified harmonic sequence, then (see \cite{Bolton})
\begin{equation} 
\delta^{(N)}_i=(i+1)(N-i).
\label{2.13}\end{equation}

Consider the Veronese sequence
$$0\stackrel{\partial{'}}{\longrightarrow }\underline{V}^{(N)}_0\stackrel{\partial{'}}
{\longrightarrow}\underline{V}^{(N)}_1\stackrel{\partial{'}}
{\longrightarrow}\cdots\stackrel{\partial{'}}
{\longrightarrow}\underline{V}^{(N)}_N\stackrel{\partial{'}}
{\longrightarrow}0.$$ For each $i=0,\ldots, N$,  $\underline{V}^{(N)}_i: S^2\rightarrow \mathbb{C}P^N$ is given by $V^{(N)}_i = (v_{i,0}, \ldots, v_{i,N})^{T}$,  where, for
$z\in S^2$ and $j=0,\ldots, N$,
$$v_{i,j}(z) =
\frac{i!}{(1 +
z\overline{z})^i}\sqrt{\binom{N}{j}}z^{j-i}\sum_{k}(-1)^k
\binom{j}{i-k}\binom{N-j}{k}(z\overline{z})^k.$$
Each map $\underline{V}^{(N)}_i$ satisfies
\begin{equation}
|V^{(N)}_i|^2=\frac{N!i!}{(N-i)!}(1+z\overline{z})^{N-2i},
\label{eq:2.10}\end{equation} it has induced metric $ds^2_i= \frac{N
+ 2i(N - i)}{(1+z\overline{z})^2}dzd\overline{z},$ and the corresponding
constant curvature $K_i$ is given by $K_i= \frac{4}{N +
2i(N-i)}$.

By Calabi's rigidity theorem, Bolton et al proved the following
rigidity result (cf. \cite{Bolton}).
\begin{lemma}[{\cite{Bolton}}]
Let $\psi : S^2 \rightarrow \mathbb{C}P^N$ be a linearly full
conformal minimal immersion of constant curvature. Then, up to a
holomorphic isometry of $\mathbb{C}P^N$, the harmonic sequence
determined by $\psi$ is the Veronese sequence.
\end{lemma}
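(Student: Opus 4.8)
The plan is to reduce the statement to Calabi's rigidity theorem for holomorphic curves, which says that any linearly full holomorphic curve $S^2\to\mathbb{C}P^N$ whose induced metric has constant curvature is congruent, under a holomorphic isometry of $\mathbb{C}P^N$, to the Veronese curve $\underline{V}^{(N)}_0$. Recall that the linearly full conformal minimal immersion $\psi$ appears as $\psi=\underline{\psi}_i$ for some $i\in\{0,\dots,N\}$ in a harmonic sequence \eqref{eq:2.7} whose initial term $\underline{\psi}_0$ is a linearly full \emph{holomorphic} curve, and that a holomorphic isometry $g$ of $\mathbb{C}P^N$ carries the harmonic sequence of $\underline{\psi}_0$ onto that of $g\underline{\psi}_0$, since $\partial'$ is natural under such $g$. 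So it is enough to show that $\underline{\psi}_0$ is, up to such a $g$, the Veronese curve $\underline{V}^{(N)}_0$: then $\psi=\underline{\psi}_i=\underline{V}^{(N)}_i$ and the entire sequence of $\psi$ becomes the Veronese sequence. By Calabi's theorem, it therefore suffices to prove that the holomorphic curve $\underline{\psi}_0$ has \emph{constant} Gauss curvature.

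To reach that, I would work out the structure equations of the sequence. In the affine coordinate $z$ on $S^2\setminus\{\infty\}$, with the local sections $f_0,\dots,f_N$ of \eqref{eq:2.8}--\eqref{eq:2.9}, put $\ell_j=|f_{j+1}|^2/|f_j|^2$ for $0\le j\le N-1$ and $\ell_{-1}\equiv\ell_N\equiv 0$. These are globally defined nonnegative functions on $S^2$ off the isolated zeros of the $f_j$; the induced metric of $\underline{\psi}_j$ is $(\ell_{j-1}+\ell_j)\,dz\,d\overline{z}$ (the rank-one case of \eqref{eq:2.5}); and from \eqref{eq:2.8}--\eqref{eq:2.9} one gets $\partial\overline{\partial}\log|f_j|^2=\ell_j-\ell_{j-1}$, hence the two-dimensional Toda-type system
\[
\partial\overline{\partial}\log\ell_j=\ell_{j+1}-2\ell_j+\ell_{j-1},\qquad 0\le j\le N-1 .
\]
Since $\psi=\underline{\psi}_i$ has constant curvature $K>0$, a M\"{o}bius change of the coordinate $z$ --- which preserves the harmonic sequence and everything above --- lets me assume $\ell_{i-1}+\ell_i=m/(1+z\overline{z})^2$ with $m=4/K$, because every constant-curvature metric on $S^2$ has this form in a suitable conformal coordinate.

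The heart of the matter is to push this single constant-curvature relation through the Toda system and conclude that every $\ell_j$ is a constant multiple of $(1+z\overline{z})^{-2}$. Observe first that once $\underline{\psi}_0$ is known to have constant curvature, i.e.\ $\ell_0=c_0/(1+z\overline{z})^2$, the Toda equations turn into a one-sided recursion --- using $\partial\overline{\partial}\log\bigl(c/(1+z\overline{z})^2\bigr)=-2/(1+z\overline{z})^2$ --- that determines $\ell_1,\ell_2,\dots$ one after another as constant multiples of $(1+z\overline{z})^{-2}$, and the single endpoint condition $\ell_N\equiv 0$ then forces $c_0=N$ and $\ell_j=(j+1)(N-j)/(1+z\overline{z})^2$ for all $j$, i.e.\ the Veronese data; so $\underline{\psi}_0$ has constant curvature $4/N$ and Calabi's theorem applies. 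Thus the real obstacle is the \emph{descent} from ``$\underline{\psi}_i$ has constant curvature'' to ``$\underline{\psi}_0$ has constant curvature'': the constraint $\ell_{i-1}+\ell_i=m/(1+z\overline{z})^2$ only fixes a \emph{sum}, so it has to be propagated through the whole Toda system toward both ends, and one must invoke the global structure of the closed surface $S^2$ --- the isolated zeros of the $f_j$, the required regularity of each $\ell_j$ at $z=\infty$, the endpoint conditions $\ell_{-1}=\ell_N=0$, and a maximum-principle (ellipticity) input --- to rule out every solution except the Veronese one. Equivalently, one must show that a full holomorphic curve in $\mathbb{C}P^N$ whose $i$-th associated curve has constant Gauss curvature is the Veronese curve; for $i=0$ this is Calabi's theorem itself, and it is the passage to general $i$ that I expect to be the main difficulty, since the Toda system is locally far from rigid and all the rigidity comes from the global analysis on $S^2$.
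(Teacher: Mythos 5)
This lemma is not proved in the paper at all: it is quoted verbatim from Bolton--Jensen--Rigoli--Woodward \cite{Bolton}, so the only meaningful comparison is with the argument in that source. Your setup is correct and matches the standard framework: the structure theory of harmonic two-spheres in $\mathbb{C}P^N$ places $\psi=\underline{\psi}_i$ in the harmonic sequence \eqref{eq:2.7} of a unique full holomorphic directrix $\underline{\psi}_0$; the infinitesimal Pl\"ucker formulae give the Toda system $\partial\overline{\partial}\log\ell_j=\ell_{j+1}-2\ell_j+\ell_{j-1}$ with $\ell_{-1}=\ell_N=0$; and your forward recursion (with $c_{j+1}=2c_j-c_{j-1}-2$, $c_j=(j+1)(c_0-j)$, and $c_N=0$ forcing $c_0=N$) correctly shows that \emph{if} $\ell_0=c_0(1+z\overline{z})^{-2}$ then all the $\ell_j$ carry the Veronese data, so Calabi's rigidity finishes the argument.

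The genuine gap is the step you yourself flag: the descent from ``$\ell_{i-1}+\ell_i=m(1+z\overline{z})^{-2}$ for one middle index $i$'' to ``$\ell_0$ is a constant multiple of $(1+z\overline{z})^{-2}$.'' That implication \emph{is} the theorem; everything else in your outline is routine. You gesture at ``a maximum-principle (ellipticity) input'' and ``the global structure of $S^2$'' but supply no candidate function to which a maximum principle applies, no argument controlling the isolated zeros of the $f_j$ (where $\log\ell_j$ blows up, so subharmonicity statements need the local normal forms of the singularities), and no mechanism for propagating the single constraint on the sum $\ell_{i-1}+\ell_i$ simultaneously toward both ends of the sequence. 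In \cite{Bolton} this occupies the bulk of the proof and proceeds by constructing globally defined auxiliary functions on $S^2$ from the $|f_0\wedge\cdots\wedge f_p|^2$ (equivalently from products of the $\ell_j$), combining the constant-curvature equation with the Toda system and the integrated Pl\"ucker (degree) formulae, and only then invoking compactness of $S^2$; none of that is present here. As written, your proposal is an accurate reduction of the theorem to its hardest sub-statement, not a proof of it.
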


\section{Holomorphic curves with parallel second fundamental form}

 We recall that an immersion of $S^2$ in $G(2,N;\mathbb{C})$ is conformal and minimal if and only if it is
harmonic. Thus, we shall consider  harmonic maps from $S^2$ to $G(2,N;\mathbb{C})$ with parallel second fundamental form  to give
the proof of Theorems 1.1-1.4 in Section 1.

 Let $\phi: S^2 \rightarrow G(2,N;\mathbb{C})$ be a harmonic map with  Gauss curvature $K$ and second fundamental form $B$.
 Suppose that  $B$ is parallel, it is known that such 2-spheres in $G(2,N;\mathbb{C})$  have constant curvature (cf. 
\cite{Jiao2012}, Theorem 4.5). To give a complete classification, in this paper we analyze $\phi$ by the following six cases:
\\ (I) $\phi$ is a holomorphic curve in $G(2,N;\mathbb{C})$;
\\ (II) rank $\partial{'}\underline{\phi} = $ rank $\partial{''}\underline{\phi} = 1$;
\\ (III) rank $\partial{'}\underline{\phi} = 1$ and rank $\partial{''}\underline{\phi} = 2$;
\\ (IV) rank $\partial{'}\underline{\phi} = $ rank $\partial{''}\underline{\phi} = 2$;
\\ (V) $\phi$ is an anti-holomorphic curve in $G(2,N;\mathbb{C})$;
\\ (VI) rank $\partial{'}\underline{\phi} = 2$ and rank $\partial{''}\underline{\phi} = 1$.

For cases (V) and (VI), since the conjugations of  corresponding $\phi: S^2 \rightarrow G(2,N;\mathbb{C})$ belong to cases (I) and (III) respectively, we only consider the classification of $\phi$ in cases (I)-(IV) here.

In this section we first discuss the case that $\phi$ is holomorphic, then a harmonic sequence is derived by $\phi$ via the $\partial^{'}$-transform
$$0\stackrel{\partial{'}}
{\longrightarrow}\underline{\phi}\stackrel{\partial{'}} {\longrightarrow}
\underline{\phi}_{1}\stackrel{\partial{'}}
{\longrightarrow}\cdots\stackrel{\partial{'}}
{\longrightarrow}0.$$
To characterize $\phi$,  we need the following two Lemmas  about parallel
minimal immersions of 2-spheres in $G(k,N;\mathbb{C})$ as follows:

\begin{lemma}(\cite{Jiao2012})
Let $\phi: S^2\rightarrow G(k,N;\mathbb{C}) $ be a conformal
minimal immersion with Gauss curvature $K$ and second fundamental form $B$. Suppose that
$B$ is parallel, then the following equations
\begin{equation} \left\{
\begin{array} {l}   \lambda^2\left(2K +
\|B\|^2\right)A_{\overline{z}} + 4[A_{\overline{z}},
[A_z,A_{\overline{z}}]] =
 0,\\
  \lambda^2 \left(\frac{\|B\|^2}{4} - K\right)P +
[[A_{\overline{z}},A_z], P] = 0
\end{array} \right. \label{eq:3.1}\end{equation} hold.
\end{lemma}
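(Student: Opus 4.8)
The plan is to obtain both identities from the Gauss, Codazzi and Ricci equations of the immersion together with the hypothesis $\nabla B=0$, after translating everything into the matrix quantities $A_z=(2\phi-I)\partial\phi$, $A_{\overline z}=-A_z^{*}$ and $P=\partial(A_z/\lambda^{2})$. First I would assemble the structural data already available: the harmonic map equation $\overline{\partial}A_z=[A_z,A_{\overline z}]$ from \eqref{eq:2.1}, the induced metric $\lambda^{2}dzd\overline z$ with $\lambda^{2}=-\mathrm{tr}(A_zA_{\overline z})$, the Gauss equation $K\lambda^{2}=-2\partial\overline{\partial}\log\lambda^{2}$ and $\|B\|^{2}=4\,\mathrm{tr}(PP^{*})$ from \eqref{eq:2.6}. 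I would also record the standard facts that, under the identification of $T_{\phi}G(k,N;\mathbb C)$ with the $(-1)$-eigenspace $\mathfrak m\subset\mathfrak u(N)$ of the symmetry, one has $d\phi(\partial/\partial z)\leftrightarrow A_z$, $d\phi(\partial/\partial\overline z)\leftrightarrow A_{\overline z}$, $[A_z,A_{\overline z}]\in\mathfrak h$ (so $\mathrm{ad}_{[A_z,A_{\overline z}]}$ preserves $\mathfrak m$ and its type decomposition), the ambient curvature is $R(X,Y)Z=-[[X,Y],Z]$ up to a normalising constant, and the normal bundle $N_\phi$ is cut out inside $\underline{\mathbb C}^{N}\times\mathfrak m$ by orthogonality to $A_z$ and $A_{\overline z}$; in particular the normal part of $\partial A_z$ is exactly $\lambda^{2}P$.

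Since $\phi$ is a conformal minimal immersion of the Riemann surface $S^{2}$, the second fundamental form has only its $(2,0)$-part $\beta=B(\partial/\partial z,\partial/\partial z)$ and its conjugate nonzero, $\nabla B=0$ is equivalent to $\nabla\beta=0$, and $\beta=\lambda^{2}P$ under the identifications above. For the first identity I would feed $\nabla B=0$ into the Codazzi equation $(\nabla_XB)(Y,Z)-(\nabla_YB)(X,Z)=-(R^{G}(X,Y)Z)^{\perp}$ with $X=\partial/\partial\overline z$, $Y=\partial/\partial z$, $Z=\partial/\partial\overline z$: the left-hand side vanishes, so the normal component of $R^{G}(\partial/\partial\overline z,\partial/\partial z)\partial/\partial\overline z\leftrightarrow[A_{\overline z},[A_z,A_{\overline z}]]$ is zero. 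Since $[A_{\overline z},[A_z,A_{\overline z}]]$ is again of type $(0,1)$, it must be a scalar multiple of $A_{\overline z}$; the multiple is then read off by pairing with $A_{\overline z}$ in the Hermitian form $\langle X,Y\rangle=\mathrm{tr}(XY^{*})$, which turns the coefficient into $\lambda^{-2}\mathrm{tr}([A_z,A_{\overline z}]^{2})$, and the Gauss equation \eqref{eq:2.6} rewrites this as $-\tfrac14(2K+\|B\|^{2})$. This produces $\lambda^{2}(2K+\|B\|^{2})A_{\overline z}+4[A_{\overline z},[A_z,A_{\overline z}]]=0$.

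For the second identity I would use that a parallel section is annihilated by the curvature of its bundle: from $\nabla\beta=0$ we get $R^{\nabla}(\partial/\partial z,\partial/\partial\overline z)\beta=0$, where $\nabla$ is the connection on $(T^{*(1,0)}S^{2})^{\otimes2}\otimes N_\phi$. Decomposing $R^{\nabla}$ summand by summand, the two cotangent factors contribute the Gauss curvature of $S^{2}$, giving a term $-K\lambda^{2}\beta$; the normal-bundle factor contributes, via the Ricci equation, the ambient piece $\mathrm{ad}_{[A_{\overline z},A_z]}$ together with a term quadratic in $B$ that collapses on a surface to $\tfrac14\|B\|^{2}\lambda^{2}\beta$. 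Setting $R^{\nabla}\beta=0$, substituting $\beta=\lambda^{2}P$ and dividing by $\lambda^{2}$ gives $\lambda^{2}\!\left(\tfrac{\|B\|^{2}}{4}-K\right)P+[[A_{\overline z},A_z],P]=0$; the $(0,2)$-analogues are the adjoints of these two equations and give nothing new.

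The conceptual skeleton --- Codazzi, Ricci and ``curvature kills parallel tensors'' --- is routine; the main obstacle is the bookkeeping needed to make the constants come out exactly $2K+\|B\|^{2}$ and $\tfrac{\|B\|^{2}}{4}-K$ rather than with stray factors. Concretely this means pinning down the precise normalisation relating $\beta$, $P$ and the normal part of $\partial A_z$; computing the Ricci-equation constant on a surface; fixing the scale of the metric on $G(k,N;\mathbb C)$ so that its curvature is represented faithfully by the double bracket in $\mathfrak u(N)$; and keeping the connection correction terms (the $\partial\log\lambda^{2}$ from the Levi-Civita connection of $S^{2}$ and the $\mathrm{ad}$-terms from $N_\phi$) consistent throughout. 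Organising the whole computation in the frame adapted to the harmonic sequence \eqref{eq:2.11}, where $\Omega_i,\Psi_i$ already encode this data through \eqref{eq:2.4}, should keep the factors transparent, and that is the route I would take.
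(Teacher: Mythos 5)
The paper contains no proof of this lemma to compare against: Lemma 3.1 is imported verbatim from the reference [Jiao 2012, Monatsh.\ Math.\ 168], and the present paper only cites it. Judged on its own terms, your outline follows the natural route, and it is consistent in spirit with how such identities are derived in that reference: with $\nabla B=0$, Codazzi forces the ambient curvature term $[A_{\overline{z}},[A_z,A_{\overline{z}}]]$ to be tangential, and the vanishing of the bundle curvature on the parallel section $\beta=B(\partial/\partial z,\partial/\partial z)$ of $(T^{*(1,0)}S^2)^{\otimes 2}\otimes N_\phi$ yields the second identity via the Ricci equation. So the conceptual skeleton is the right one.

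Two points keep this from being a proof rather than a plan. First, ``tangential and of type $(0,1)$, hence a scalar multiple of $A_{\overline{z}}$'' is not automatic: a tangential element of $\mathfrak{m}^{\mathbb{C}}$ is a combination $aA_z+bA_{\overline{z}}$, and $A_{\overline{z}}$ is not of pure type for the ambient complex structure unless $\phi$ is $\pm$holomorphic. To kill the $A_z$-component you need the conformality relation $\mathrm{tr}(A_zA_z)=0$ together with $\mathrm{tr}\bigl([A_{\overline{z}},[A_z,A_{\overline{z}}]]A_{\overline{z}}\bigr)=\mathrm{tr}\bigl([A_z,A_{\overline{z}}][A_{\overline{z}},A_{\overline{z}}]\bigr)=0$ before reading off the remaining scalar by pairing. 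Second, and more seriously, the two coefficients $2K+\|B\|^2$ and $\tfrac{\|B\|^2}{4}-K$ are the entire content of the statement, and your appeal to ``the Gauss equation \eqref{eq:2.6}'' conflates two different things: \eqref{eq:2.6} is only the intrinsic formula $K=-\tfrac{2}{\lambda^2}\partial\overline{\partial}\log\lambda^2$, whereas what you need is the extrinsic Gauss equation identifying $\mathrm{tr}([A_z,A_{\overline{z}}]^2)$ with $\tfrac{\lambda^4}{4}(2K+\|B\|^2)$, and likewise the Ricci-equation constant $\tfrac{\lambda^2\|B\|^2}{4}$ on the normal factor. You explicitly defer this bookkeeping, but since the lemma is quantitative, that deferred bookkeeping is the proof; as written the argument establishes only that each bracket expression is proportional to $A_{\overline{z}}$ (resp.\ to $P$) with an unidentified scalar.
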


\begin{lemma} (\cite{Jiao2012})
Let $\phi: S^2\rightarrow G(k,N;\mathbb{C}) $ be a conformal
minimal immersion with Gauss curvature $K$ and second fundamental form $B$. Then $B$ is
parallel if and only if the equation
\begin{equation}\frac{\lambda^2}{16}\|B\|^2(8K + \|B\|^2) - 2 \textrm{tr}[A_z,
P][A_{\overline{z}}, P^\ast] + 5\textrm{tr}[A_z,
A_{\overline{z}}][P, P^\ast] = 0 \label{eq:3.2}\end{equation} holds.
\end{lemma}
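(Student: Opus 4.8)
The plan is to encode the parallelism of $B$ as the vanishing of a single normal‑bundle–valued tensor on $S^{2}$, to compute a scalar built from that tensor ($\|\nabla B\|^{2}$) explicitly in the moving‑frame quantities $A_{z}$, $A_{\overline z}=-A_{z}^{\ast}$, $P$ and $\lambda$, and finally to identify the result as a nonzero constant multiple of the left‑hand side of \eqref{eq:3.2}. The equivalence asserted in the lemma is then immediate, since a squared norm vanishes exactly where the underlying tensor does.

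\textbf{Step 1: reduction to one tensor.} Because $\phi$ is conformal and minimal from a surface, after complexifying, the whole second fundamental form $B$ of $\phi:S^{2}\to G(k,N;\mathbb{C})$ is carried by its $(2,0)$–component $\beta:=B(\tfrac{\partial}{\partial z},\tfrac{\partial}{\partial z})$, valued in the $(1,0)$–part of the normal bundle, together with its conjugate — minimality forces the mixed component $B(\tfrac{\partial}{\partial z},\tfrac{\partial}{\partial\overline z})$ to vanish. Under the description of $G(k,N;\mathbb{C})$ by Hermitian projections and the structure equations \eqref{eq:2.4}, $\beta$ equals, up to a universal constant and a power of $\lambda$, the matrix $P=\partial(A_{z}/\lambda^{2})$, consistently with $\|B\|^{2}=4\,\textrm{tr}(PP^{\ast})$. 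Writing out the covariant derivative $\nabla B$ with respect to the Levi–Civita connection of $ds^{2}=\lambda^{2}dzd\overline z$, the normal connection of $\phi$, and the parallel curvature tensor of the symmetric space $G(k,N;\mathbb{C})$, one checks from the Codazzi equation and the minimality of $\phi$ that $\nabla B$ is governed, together with its conjugate, by a single normal‑valued matrix obtained from $\partial P$ plus lower‑order connection and curvature corrections; the two matrix identities in \eqref{eq:3.1} appear as contractions of the equation $\nabla B=0$. In particular $B$ is parallel if and only if $\nabla B\equiv 0$, i.e. $\|\nabla B\|^{2}\equiv 0$.

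\textbf{Step 2: explicit norm.} To compute $\|\nabla B\|^{2}$ in the frame one differentiates $P=\partial(A_{z}/\lambda^{2})$ once more and substitutes: the harmonic‑map equation \eqref{eq:2.1}, $\overline\partial A_{z}=[A_{z},A_{\overline z}]$; the structure equations \eqref{eq:2.4} with their integrability conditions; the identities $A_{\overline z}=-A_{z}^{\ast}$ and $-\textrm{tr}(A_{z}A_{\overline z})=\lambda^{2}$ (the latter from \eqref{eq:2.5}); and the Gauss‑curvature formula \eqref{eq:2.6} to trade $\partial\overline\partial\log\lambda^{2}$ for $K$ and $\|B\|^{2}$. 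This produces $\partial P$ corrected by the brackets $[A_{z},P]$ and $\big[[A_{\overline z},A_{z}],P\big]$ and by multiples of $A_{\overline z}$ and $P$. Forming $\|\nabla B\|^{2}$ and simplifying — using repeatedly $\textrm{tr}\big([X,P]P^{\ast}\big)=\textrm{tr}\big(X[P,P^{\ast}]\big)$, the relation $\textrm{tr}(PP^{\ast})=\tfrac14\|B\|^{2}$, and the trace identities for the $A$'s — one is left, after all cancellations, with
\[
\|\nabla B\|^{2}=c\left(\frac{\lambda^{2}}{16}\|B\|^{2}\big(8K+\|B\|^{2}\big)-2\,\textrm{tr}[A_{z},P][A_{\overline z},P^{\ast}]+5\,\textrm{tr}[A_{z},A_{\overline z}][P,P^{\ast}]\right)
\]
for a nonzero constant $c$.

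\textbf{Step 3: conclusion and the main difficulty.} The left‑hand side of the last display vanishes at a point of $S^{2}$ exactly when $\nabla B$ does, hence (by Step 1) vanishes identically if and only if $B$ is parallel; since $c\neq 0$, the right‑hand side — which is the left‑hand side of \eqref{eq:3.2} — has the same property, which is the claim. The substantive step is Step 2: performing the second differentiation of $A_{z}/\lambda^{2}$ and collapsing the resulting mass of bracket and trace terms into precisely the combination with coefficients $8$, $-2$ and $5$, while correctly absorbing the curvature‑correction terms entering through the Codazzi equation of the non‑flat ambient $G(k,N;\mathbb{C})$. It is not enough to note that $\|\nabla B\|^{2}$ is \emph{some} quadratic expression in the invariants $\lambda^{2}\|B\|^{2}K$, $\lambda^{2}\|B\|^{4}$, $\textrm{tr}[A_{z},P][A_{\overline z},P^{\ast}]$ and $\textrm{tr}[A_{z},A_{\overline z}][P,P^{\ast}]$; the coefficients must be pinned down and every remaining term shown to cancel. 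As a partial consistency check, tracing the second equation of \eqref{eq:3.1} against $P^{\ast}$ already gives $\textrm{tr}[A_{z},A_{\overline z}][P,P^{\ast}]=\tfrac{\lambda^{2}}{16}\|B\|^{2}\big(\|B\|^{2}-4K\big)$, which reproduces the contribution of the last term of \eqref{eq:3.2}.
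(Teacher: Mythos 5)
The paper itself offers no proof of this lemma: it is quoted from \cite{Jiao2012}, so there is no internal argument to compare yours against, and your proposal must stand on its own. It does not, because the identity on which everything rests in Step 2 --- a \emph{pointwise} proportionality $\|\nabla B\|^{2}=c\cdot(\text{left side of }\eqref{eq:3.2})$ with $c\neq 0$ --- is false. Every term on the left of \eqref{eq:3.2} is algebraic in $A_{z}$, $P$ and $\lambda^{2}$ (the term $\tfrac{\lambda^{2}}{2}K\|B\|^{2}=-\|B\|^{2}\,\partial\overline\partial\log\lambda^{2}$ reduces to such an expression via the harmonic map equation, or via the Gauss equation), so it is determined by the $2$-jet of $\phi$; in particular, at any point where $P=0$ the entire left side of \eqref{eq:3.2} vanishes, since $\|B\|^{2}=4\,\mathrm{tr}(PP^{\ast})$. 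By contrast, $\|\nabla B\|^{2}$ contains $|(\nabla_{\partial_{z}}B)(\partial_{z},\partial_{z})|^{2}$, which sees the holomorphic part of the $3$-jet and need not vanish where $B$ does. Concretely, for the holomorphic curve $f_{0}=(1,\sqrt{3}z,z^{3})$ into $\mathbb{C}P^{2}=G(1,3;\mathbb{C})$ one has $f_{2}=0$ at $z=0$, hence $B=P=0$ there and the left side of \eqref{eq:3.2} vanishes at that point, while $\partial f_{2}\neq 0$ forces $\nabla B\neq 0$ at that point. So no nonzero constant $c$ can make your displayed identity true, and the sentence in Step 3, ``the left-hand side \dots vanishes at a point of $S^{2}$ exactly when $\nabla B$ does,'' is wrong.

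The equivalence in the lemma is necessarily global, not pointwise, and the correct shape of the computation is a Simons-type identity $\text{LHS of }\eqref{eq:3.2}=c\,\|\nabla B\|^{2}+\partial(\cdots)+\overline\partial(\cdots)$, with the divergence terms linear in the components of $\nabla B$: then ``$\nabla B\equiv 0\Rightarrow\eqref{eq:3.2}$'' holds because every term on the right vanishes pointwise, while ``$\eqref{eq:3.2}\Rightarrow\nabla B\equiv 0$'' follows only after integrating over the \emph{closed} surface $S^{2}$ so that the divergence integrates to zero. Your argument never invokes the compactness of $S^{2}$, which is a structural symptom of the gap --- the example above shows the statement is simply false as a pointwise criterion. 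Finally, even granting the intended framework, the computation that would actually establish the coefficients $8$, $-2$, $5$ (which you yourself flag as ``the substantive step'') is deferred rather than performed, and your closing consistency check only verifies that \eqref{eq:3.2} is compatible with the necessary conditions \eqref{eq:3.1} in the parallel case; it provides no evidence for the claimed norm identity in the non-parallel case, which is precisely where the lemma has content.
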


In the following we shall analyze $\phi$ by rank $\underline{\phi}_{1}$=1 and rank $\underline{\phi}_{1}$=2 respectively.

\subsection{$\partial^{'}$-reducible holomorphic curves with parallel second fundamental form}

Here we suppose that $\phi$ is a holomorphic curve from $S^2$ to $G(2,N;\mathbb{C})$ with parallel second fundamental form
and rank $\partial^{'}\underline{\phi} =1$. To characterize these holomorphic curves, firstly by denoting $\partial^{(-1)}g=\partial^{''}g, \ \partial^{(-i-1)}g=\partial^{''}(\partial^{(-i)}g)$, we state one of Burstall and Wood' results as follows:

\begin{lemma}[{\cite{Burstall}}]
Let $\underline{\phi}:S^2\rightarrow G(2,N;\mathbb{C})$ be harmonic with $\partial{'}\underline{\phi}$ of rank one and $A''_{\phi}(\underline{ker}A^{'\perp}_{\phi})=0$. Then either (i) there is an anti-holomorphic map $g:S^2\rightarrow \mathbb{C}P^{N-1}$ and $\underline{\phi}=\partial^{(-i)}\underline{g}\oplus \partial^{(-i-1)}\underline{g}$ for some integer $i\geq 0$, (it can be shown that $\phi$ is a Frenet pair); or (ii) there are maps $g,h:S^2 \rightarrow \mathbb{C}P^{N-1}$ anti-holomorphic and holomorphic respectively such that $\partial'\underline{h}\perp \underline{g}$ and $\underline{\phi}=\underline{g}\oplus \underline{h}$, i.e. $\underline{\phi}$ is a mixed pair.
\end{lemma}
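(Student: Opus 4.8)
\bigskip
\noindent\textbf{Proof proposal.} The plan is to study the harmonic map $\phi$ through its $(1,0)$- and $(0,1)$-second fundamental forms $A'_{\phi},A''_{\phi}\colon\underline{\phi}\to\underline{\phi}^{\perp}$, and to reconstruct $\underline{\phi}$ from a harmonic sequence of line subbundles of $\underline{\mathbb{C}}^{N}$. Since $\mathrm{rank}\,\partial'\underline{\phi}=\mathrm{rank}\,\underline{\mathrm{Im}}\,A'_{\phi}=1$, the bundle map $A'_{\phi}$ has rank one; off its (isolated) zeros $\underline{\ker}\,A'_{\phi}$ is a line subbundle of $\underline{\phi}$, and by the standard removable-singularity argument for subbundles over a Riemann surface it extends to a line subbundle $\underline{\alpha}\subset\underline{\phi}$. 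Put $\underline{\beta}=\underline{\phi}\ominus\underline{\alpha}$, so $\underline{\phi}=\underline{\alpha}\oplus\underline{\beta}$. Harmonicity of $\phi$ is equivalent to $A'_{\phi}$ being holomorphic for the induced $\overline{\partial}$-operator (this is the integrability condition $\overline{\partial}\Omega_{i}=\Psi_{i+1}^{*}\Omega_{i}-\Omega_{i}\Psi_{i}^{*}$ of Section~2 together with its companion for $\Psi_{i}$); in particular $\underline{\phi}_{1}:=\underline{\mathrm{Im}}\,A'_{\phi}$ is a holomorphic line subbundle of $\underline{\phi}^{\perp}$, and $A'_{\phi}\,\underline{\alpha}=0$ means precisely $\partial'\underline{\alpha}\subseteq\underline{\phi}$.

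I would then exploit the degeneracy hypothesis, read as $A''_{\phi}$ vanishing on $\underline{\phi}\ominus\underline{\ker}\,A'_{\phi}=\underline{\beta}$. This forces $\mathrm{rank}\,A''_{\phi}\le1$ and $\partial''\underline{\beta}\subseteq\underline{\phi}$; and since $\mathrm{rank}\,\partial'\underline{\phi}=1$ rules out $\phi$ being anti-holomorphic (which would give $A'_{\phi}\equiv0$), either $\underline{\ker}\,A''_{\phi}=\underline{\beta}$ or $\phi$ is holomorphic. The argument now splits according to the single section-valued invariant $\pi_{\underline{\beta}}(\partial'\underline{\alpha})$, equivalently according to whether or not $\underline{\alpha}$ is an anti-holomorphic subbundle of $\underline{\mathbb{C}}^{N}$ (the two are equivalent since $\partial'\underline{\alpha}$ already lies in $\underline{\phi}$).

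\emph{Case $\pi_{\underline{\beta}}(\partial'\underline{\alpha})=0$.} Then $\partial'\underline{\alpha}\subseteq\underline{\alpha}$, so $\underline{\alpha}$ is an anti-holomorphic curve $g\colon S^{2}\to\mathbb{C}P^{N-1}$, and (using $\langle s_{\alpha},s_{\beta}\rangle\equiv0$ for local sections together with the harmonicity equations) one upgrades this to $\partial''\underline{\beta}\subseteq\underline{\beta}$ and to $\partial'\underline{h}\perp\underline{g}$, where $h\colon S^{2}\to\mathbb{C}P^{N-1}$ is the holomorphic curve $\underline{\beta}$; thus $\underline{\phi}=\underline{g}\oplus\underline{h}$ is a mixed pair, which is conclusion~(ii). \emph{Case $\pi_{\underline{\beta}}(\partial'\underline{\alpha})\neq0$.} Then $\partial'$ carries $\underline{\alpha}$ onto $\underline{\beta}$ and $\partial''$ carries $\underline{\beta}$ onto $\underline{\alpha}$ (modulo themselves); together with $\underline{\phi}_{1}=\underline{\mathrm{Im}}\,A'_{\phi}$ lying "above" $\underline{\beta}$ and $\underline{\mathrm{Im}}\,A''_{\phi}$ lying "below" $\underline{\alpha}$, this realizes $\underline{\alpha}$ and $\underline{\beta}$ as two consecutive members of a single harmonic sequence of line subbundles of $\underline{\mathbb{C}}^{N}$. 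Extending the sequence maximally in the $\partial''$-direction, it terminates after finitely many steps, since over $S^{2}$ the constituent line bundles are unramified off isolated points and their degrees cannot stay non-negative forever (the Plücker/ramification relations for harmonic sequences of $S^{2}\to\mathbb{C}P^{N-1}$); its bottom term $\underline{g}$ is then $\overline{\partial}$-closed, i.e.\ an anti-holomorphic $g\colon S^{2}\to\mathbb{C}P^{N-1}$, and $\underline{\phi}=\partial^{(-i)}\underline{g}\oplus\partial^{(-i-1)}\underline{g}$ for the appropriate $i\ge0$; consecutive members of a harmonic sequence automatically form a harmonic pair, so this is conclusion~(i) (the Frenet pair).

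The step I expect to be the main obstacle is making the dichotomy rigorous: one must show that the \emph{partial} information available on the splitting $\underline{\phi}=\underline{\alpha}\oplus\underline{\beta}$ — namely $A'_{\phi}\,\underline{\alpha}=0$, $A''_{\phi}\,\underline{\beta}=0$, and (in the first case) $\pi_{\underline{\beta}}(\partial'\underline{\alpha})=0$ — propagates, via the harmonicity equations, to the full geometric statements $\partial''\underline{\beta}\subseteq\underline{\beta}$ and $\partial'\underline{h}\perp\underline{g}$ in the mixed case, and that in the other case the pieces $\underline{\mathrm{Im}}\,A''_{\phi},\,\underline{\alpha},\,\underline{\beta},\,\underline{\phi}_{1}$ genuinely close up into one honest harmonic sequence whose left-hand end is anti-holomorphic. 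These implications do not follow from the orthogonal splitting alone; they use the harmonic map equation in an essential way. By comparison, the removable-singularity analysis at the common zeros of $A'_{\phi}$ and $A''_{\phi}$, and the bookkeeping of which sequence in $\mathbb{C}P^{N-1}$ one lands in, are routine.
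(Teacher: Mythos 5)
First, a point of reference: the paper does not prove this statement at all. It is Lemma 3.3, quoted verbatim from Burstall and Wood \cite{Burstall} and used as a black box, so there is no in-paper argument to compare you against; your proposal has to stand on its own as a reconstruction of the Burstall--Wood proof.

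Read that way, it is a correct \emph{outline} of the right strategy (split $\underline{\phi}=\underline{\alpha}\oplus\underline{\beta}$ with $\underline{\alpha}=\underline{\ker}\,A'_{\phi}$, then dichotomize on whether $\underline{\alpha}$ is an anti-holomorphic curve), but it is not a proof, because the two implications you yourself single out as ``the main obstacle'' are precisely the mathematical content of the lemma, and you leave both as assertions. The gap is real, not cosmetic. In the mixed-pair branch, the conclusion $\partial'\underline{h}\perp\underline{g}$ cannot be ``upgraded'' from the splitting data $A'_{\phi}\underline{\alpha}=0$, $A''_{\phi}\underline{\beta}=0$, $\pi_{\underline{\beta}}(\partial'\underline{\alpha})=0$ alone: take $N=3$, $\underline{g}$ spanned by $(\bar z,-1,1)^T$ and $\underline{h}$ spanned by $(1,z,0)^T$. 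Then $g$ is anti-holomorphic, $h$ is holomorphic, $\underline{g}\perp\underline{h}$, $\underline{\ker}\,A'_{\phi}=\underline{g}$, $A''_{\phi}(\underline{h})=0$ and $\mathrm{rank}\,\partial'\underline{\phi}=1$, yet $\langle\partial h,g\rangle=-1\neq0$, so $\partial'\underline{h}\not\perp\underline{g}$ --- and indeed this $\phi$ is not harmonic (one checks $\langle\partial u,u\rangle\neq 0$ for the section $u$ spanning $\underline{\phi}^{\perp}$, so $A'_{\phi}$ is not holomorphic for the Koszul--Malgrange structures). Hence every hypothesis of the lemma except harmonicity can hold while both conclusions fail; the harmonic map equation must enter explicitly at exactly this step, and your sketch never says how. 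The same criticism applies to the Frenet branch: the claim that $\underline{\mathrm{Im}}\,A''_{\phi}$, $\underline{\alpha}$, $\underline{\beta}$, $\partial'\underline{\phi}$ ``genuinely close up into one honest harmonic sequence'' requires showing that $\underline{\alpha}$ and $\underline{\beta}$, regarded as maps into $\mathbb{C}P^{N-1}$, are themselves harmonic and consecutive in a single Frenet sequence terminating in an anti-holomorphic curve; this is where Burstall--Wood's forward/backward replacement machinery (which the present paper does invoke elsewhere, e.g.\ their Theorem 2.4 and the paper's Lemma 5.1) carries the load, and it is absent from your outline. The remaining ingredients --- extension of $\underline{\ker}\,A'_{\phi}$ across its zeros, well-posedness of the dichotomy, finiteness of harmonic sequences over $S^2$ --- are fine as stated.
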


$A^{'}_{\phi}$ and $A^{''}_{\phi}$ shown in Lemma 3.3 are vector bundle morphisms from $\underline{\phi}$ to $\underline{\phi}^{\bot}$, they are defined by $A^{'}_{\phi}(v)=\pi_{\phi^{\bot}}(\partial v)$ and $A^{''}_{\phi}(v)=\pi_{\phi^{\bot}}(\overline{\partial} v)$ respectively for some $v\in\mathbb{C}^{\infty}(\underline{\phi})$ (cf. \cite{Burstall}). Here clearly we have $A''_{\phi}(\underline{ker}A^{'\perp}_{\phi})=0$ from the assumption that $\phi$ is holomorphic. With the help of Lemma 3.3, we now consider two cases: (1) $\phi$ is a Frenet pair, and (2) $\phi$ is a mixed pair.

{\bfseries \emph{Firstly we consider the case that $\phi$ is a Frenet pair.}}
 In this case $$\underline{\phi}=\underline{f}_0 \oplus \underline{f}_1:S^2\rightarrow G(2,N;\mathbb{C}),$$ it belongs to the harmonic sequence as follows
\begin{equation}0\stackrel{\partial{'}}
{\longrightarrow}\underline{\phi}=\underline{f}_0 \oplus \underline{f}_1\stackrel{\partial{'}} {\longrightarrow}
\underline{f}_{2}\stackrel{\partial{'}}
{\longrightarrow}\cdots\stackrel{\partial{'}}
{\longrightarrow}\underline{f}_{n}\stackrel{\partial{'}}
{\longrightarrow}0, \label{3.3}\end{equation}
where \begin{equation}0\stackrel{\partial{'}}
{\longrightarrow} \underline{f}_0\stackrel{\partial{'}}
{\longrightarrow}\underline{f}_1\stackrel{\partial{'}}
{\longrightarrow} \cdots \stackrel {\partial{'}}
{\longrightarrow}\underline{f}_n \stackrel{\partial{'}}
{\longrightarrow}0  \label{3.4}\end{equation}   is a linearly full harmonic sequence in
$\mathbb{C}P^{n}$ with $N=n+1$. By making use of  $\phi=\frac{f_0f_0^{*}}{|f_0|^2}+\frac{f_1f_1^{*}}{|f_1|^2}$, we  get 
\begin{equation}\partial\phi = \frac{f_2f_1^{*}}{|f_1|^2}, \quad \overline{\partial}\phi = \frac{f_1f_2^{*}}{|f_1|^2}, \quad \lambda^2=\frac{|f_2|^2}{|f_1|^2},  \quad      A_z = -\frac{f_2f_1^{*}}{|f_1|^2},  \quad  A_{\overline{z}} = \frac{f_1f_2^{*}}{|f_1|^2},   \label{eq:3.3}\end{equation}
$$[A_z, A_{\overline{z}}]  = \frac{|f_2|^2}{|f_1|^4}f_1f_1^{*}-\frac{1}{|f_1|^2}f_2f_2^{*}, \quad [A_{\overline{z}}, [A_z, A_{\overline{z}}]]  =  -\frac{2|f_2|^2}{|f_1|^4}f_1f_2^{*}.$$
Then it follows from  $P=\partial\left(\frac{A_z}{\lambda^2}\right)$ that 
\begin{equation}P = -\frac{1}{|f_2|^2}f_3f_1^{*}+\frac{|f_1|^2}{|f_0|^2|f_2|^2}f_2f_0^{*}\label{eq:3.4}\end{equation} and 
\begin{equation}[[A_{\overline{z}}, A_z], P]=\frac{1}{|f_0|^2}f_2f_0^{*}-\frac{1}{|f_1|^2}f_3f_1^{*}.\label{eq:3.5}\end{equation}
For convenience, we denote
\begin{equation} M_1=-\frac{\lambda^2(2K+\|B\|^2)}{4},    \quad    M_2=\lambda^2(K-\frac{\|B\|^2}{4}).\label{eq:3.6}\end{equation}   From the two equations of  \eqref{eq:3.1}   we get 
$$M_1=-2\lambda^2,  \quad   M_2=\lambda^2,$$ which  verifies that
$$K+\frac{\|B\|^2}{2}=4,  \quad  K-\frac{\|B\|^2}{4}=1,$$ and therefore $$ K=2, \quad \|B\|^2=4.$$ 

Since the second fundamental form of the map $\phi$ is paralle, its Gauss curvature is a constant (cf. \cite{Jiao2012}, Theorem 4.5).
Hence by (\cite{HP2}, Lemma 4.1) we know that $\underline{f}_0: S^2\rightarrow \mathbb{C}P^2$ is of constant curvature, then harmonic sequences \eqref{3.3} and \eqref{3.4} are both totally unramified. From \eqref{2.13} we get
$$\delta^{(n)}_0=n,\ \delta^{(n)}_1=2(n-1), \ \delta^{(n)}_2=3(n-2).$$
By substituting the metric of  $\phi$ shown in  \eqref{eq:3.3} into  \eqref{eq:2.6}, we have
$$2=K=-\frac{2}{\lambda^2}\partial\overline{\partial}\log \lambda^2=-\frac{2}{l_1}\partial\overline{\partial}\log l_1=4-2\frac{l_0+l_2}{l_1}=4-2\frac{\delta^{(n)}_0+\delta^{(n)}_2}{\delta^{(n)}_1},$$
then
 $$n=2.$$
Using the rigidity
theorem of Bolton et al (\cite{Bolton}), up to a holomorphic isometry of
$\mathbb{C}P^{2}$,  $\underline{f}_0$ is a Veronese surface. We
can choose a complex coordinate $z$ on
$\mathbb{C}=S^2\backslash\{pt\}$ so that $ {f}_0 = {U}{V}^{(2)}_0$,
where $U\in U(3)$ and ${V}^{(2)}_0$ has the standard expression
given in Section 2.
By Lemma 3.2, it can easily be checked that, for any $U\in U(3)$,  \begin{equation}\underline{\phi}=\underline{UV}^{(2)}_0\oplus \underline{UV}^{(2)}_1: S^2\rightarrow G(2,3;\mathbb{C})\label{eq:3.7}\end{equation} is of parallel second fundamental form because it satisfies \eqref{eq:3.2}.

{\bfseries \emph{Next, we consider the case that $\phi$ is a  mixed pair.}} In this case $$\underline{\phi}=\underline{f}_0 \oplus \underline{c}_0: S^2\rightarrow G(2,N;\mathbb{C}),$$
where $\underline{c}_0$ is the line bundle spanned by constant vector $(0,0,\ldots, 0,1)^T$ in $\mathbb{C}^{N}$,
$\phi$ belongs to the harmonic sequence as follows
$$0\stackrel{\partial{'}}
{\longrightarrow}\underline{\phi}=\underline{f}_0 \oplus \underline{c}_0\stackrel{\partial{'}} {\longrightarrow}
\underline{f}_{1}\stackrel{\partial{'}}
{\longrightarrow}\cdots\stackrel{\partial{'}}
{\longrightarrow}\underline{f}_{n}\stackrel{\partial{'}}
{\longrightarrow}0, \ N=n+2.$$
By making use of  $\phi=\frac{f_0f_0^{*}}{|f_0|^2}+\frac{c_0c_0^{*}}{|c_0|^2}$ and a similar calculation as the first case above,  it is very evident that
\begin{equation}\lambda^2=\frac{|f_1|^2}{|f_0|^2}.\label{eq:3.8}\end{equation}
Thus  $\underline{f}_0: S^2\rightarrow \mathbb{C}P^n, \ n=N-2$  is also of constant curvature,  and there exists some $U\in U(n+1)$ s.t. $f_0=UV^{(n)}_0$.
To determine $\phi$, we shall divide our discussion into two cases, according as $n=1$,  or $n\geq 2$.

 If $n=1$.  Under this supposition, it can be checked that for any $U\in U(3)$, 
 \begin{equation}\underline{\phi}=\underline{UV}_0^{(1)} \oplus \underline{c}_0\label{eq:3.9}\end{equation}
  is a totally geodesic map from $S^2$ to $G(2,3;\mathbb{C})$ with constant curvature $K=4$ by direct computation (adding zero to the end of  $V^{(1)}_0$ such that it belongs to $\mathbb{C}^3$, in the absence of confusion, we also denote it by $V^{(1)}_0$).

 If $n\geq 2$.  Direct computations show that 
\begin{equation} A_{\overline{z}}=\frac{f_0f_1^{*}}{|f_0|^2},   \ [A_{\overline{z}}, [A_z, A_{\overline{z}}]]  =  -\frac{2|f_1|^2}{|f_0|^4}f_0f_1^{*},  \   P = -\frac{f_2f_0^{*}}{|f_1|^2},  \ [[A_{\overline{z}}, A_z], P]=-\frac{f_2f_0^{*}}{|f_0|^2}.\label{eq:3.10}\end{equation}
Then using relations $[A_{\overline{z}}, [A_z, A_{\overline{z}}]]=M_1A_{\overline{z}}$ and $[[A_{\overline{z}}, A_z], P]=M_2P$, we find easily
\begin{equation}M_1=-2\lambda^2,   \quad M_2=\lambda^2. \end{equation} 
 With it we further obtain \begin{equation} K=2, \quad \|B\|^2=4,\label{eq:3.12}\end{equation}
and then
$$2=K=-\frac{2}{\lambda^2}\partial\overline{\partial}\log \lambda^2=-\frac{2}{l_0}\partial\overline{\partial}\log l_0=4-2\frac{l_1}{l_0}=4-2\frac{\delta^{(n)}_1}{\delta^{(n)}_0}.$$
This together with $\delta^{(n)}_0=n, \ \delta^{(n)}_1=2(n-1)$
 implies $$n=2.$$
From Lemma 2.2, up to a holomorphic isometry of $\mathbb{C}P^2$, $\underline{f}_0, \ \underline{f}_1, \ \underline{f}_2: S^2\rightarrow \mathbb{C}P^2$ are Veronese surfaces.
Then by Lemma 3.2, it can easily be checked that,  for any $U\in U(4)$ \begin{equation}\underline{\phi}=\underline{UV}^{(2)}_0\oplus \underline{c}_0: S^2\rightarrow G(2,4;\mathbb{C})\label{eq:3.13}\end{equation} is of parallel second fundamental form with constant curvature $K=2$ (adding zero to the end of  $V^{(2)}_0$ such that it belongs to $\mathbb{C}^4$).

Summing up, we get the following property

\begin{prop}
Let $\phi: S^2\rightarrow G(2,N;\mathbb{C})$ be a linearly full holomorphic curve with parallel  second fundamental form. Suppose rank $\partial^{'}\underline{\phi} =1$,  then $\phi$ is congruent to cases (1) (2) or (3) in Theorem 1.1.
\end{prop}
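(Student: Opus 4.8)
\medskip
\noindent\textbf{Proof plan.}
The strategy is to reduce to normal forms via the Burstall--Wood structure theorem and then pin down the survivors with the parallelism equations of Lemma 3.1. Since $\phi$ is holomorphic we have $\phi^{\bot}\overline{\partial}\phi=0$, hence $A''_{\phi}(\underline{ker}\,A^{'\perp}_{\phi})=0$, so Lemma 3.3 applies and $\underline\phi$ is either a Frenet pair $\underline f_0\oplus\underline f_1$ or a mixed pair $\underline f_0\oplus\underline c_0$ with $\underline c_0$ a constant line bundle. So it is enough to treat these two cases and show each forces one of the normal forms (1), (2), (3) of Theorem 1.1.

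In the Frenet case I would place $\underline\phi$ inside a linearly full harmonic sequence $\underline f_0\to\cdots\to\underline f_n$ in $\mathbb{C}P^n$, $N=n+1$, and use the Frenet relations \eqref{eq:2.8}--\eqref{eq:2.9} to express $\lambda^2$, $A_z$, $A_{\overline z}$, $[A_{\overline z},[A_z,A_{\overline z}]]$, $P$ and $[[A_{\overline z},A_z],P]$ explicitly in terms of $f_0,\dots,f_3$ (these are \eqref{eq:3.3}--\eqref{eq:3.5}). Feeding these into the two equations of Lemma 3.1 and comparing coefficients should give $M_1=-2\lambda^2$, $M_2=\lambda^2$ (notation \eqref{eq:3.6}), hence $K=2$ and $\|B\|^2=4$; then the Veronese metric data together with $K=2$ forces $n=2$, so $\underline f_0:S^2\to\mathbb{C}P^2$ has constant curvature and Lemma 2.2 makes it a Veronese surface up to a holomorphic isometry. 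This produces $\underline\phi=\underline{UV}^{(2)}_0\oplus\underline{UV}^{(2)}_1$, $U\in U(3)$, and a direct check of \eqref{eq:3.2} via Lemma 3.2 confirms parallelism — case (1).

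In the mixed case $\underline\phi=\underline f_0\oplus\underline c_0$ with $N=n+2$, the analogous computation first yields $\lambda^2=|f_1|^2/|f_0|^2$ (equation \eqref{eq:3.8}), so $\underline f_0$ is again of constant curvature with $f_0=UV^{(n)}_0$ for some $U\in U(n+1)$. I would then split on the value of $n$: for $n=1$ the map $\underline{UV}^{(1)}_0\oplus\underline c_0:S^2\to G(2,3;\mathbb{C})$ is totally geodesic with $K=4$, $\|B\|^2=0$, giving case (2); for $n\geq2$ the brackets \eqref{eq:3.10} together with the eigenvalue relations from Lemma 3.1 again force $M_1=-2\lambda^2$, $M_2=\lambda^2$, hence $K=2$, $\|B\|^2=4$, hence $n=2$, and rigidity plus Lemma 3.2 give case (3), $\underline{UV}^{(2)}_0\oplus\underline c_0:S^2\to G(2,4;\mathbb{C})$ with $U\in U(4)$.

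The main obstacle is the bookkeeping around necessity versus sufficiency rather than any single hard computation: one must be sure the Burstall--Wood dichotomy overlooks no case, and, since the bracket computations with Lemma 3.1 only show that a parallel holomorphic curve of this type \emph{must} have the stated $(K,\|B\|^2)$, ambient dimension and Veronese shape, one must separately verify that each surviving candidate \eqref{eq:3.7}, \eqref{eq:3.9}, \eqref{eq:3.13} actually satisfies the parallelism criterion \eqref{eq:3.2}, and keep track of which $U(m)$ the ambient unitary freedom lives in. Given the explicit Veronese data \eqref{eq:2.10} and the relations \eqref{eq:2.8}--\eqref{eq:2.9}, all of these verifications are finite computations.
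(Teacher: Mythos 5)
Your proposal is correct and follows essentially the same route as the paper: the Burstall--Wood dichotomy (Lemma 3.3) reduces to the Frenet-pair and mixed-pair cases (with the constant line $\underline{c}_0$ forced by holomorphicity of $\phi$), the eigenvalue relations from Lemma 3.1 yield $M_1=-2\lambda^2$, $M_2=\lambda^2$ and hence $K=2$, $\|B\|^2=4$ (resp.\ the totally geodesic $n=1$ subcase), the constant-curvature rigidity of Bolton et al.\ pins down the Veronese normal forms, and Lemma 3.2 confirms sufficiency. No substantive difference from the paper's argument.
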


\subsection{$\partial^{'}$-irreducible  holomorphic curves with parallel second fundamental form}

Let $\phi: S^2 \rightarrow G(2,N;\mathbb{C})$ be a linearly full conformal minimal immersion with  Gauss curvature $K$ and second fundamental form $B$. Suppose that  $B$ is parallel and $\phi$ is holomorphic with $\partial^{'}$-irreducible, then a harmonic sequence derived by $\phi$ via the $\partial^{'}$-transform is  as follows
\begin{equation}0\stackrel{\partial{'}}
{\longrightarrow}\underline{\phi}\stackrel{\partial{'}} {\longrightarrow}
\underline{\phi}_{1}\stackrel{\partial{'}}
{\longrightarrow}\cdots\stackrel{\partial{'}}
{\longrightarrow}0\label{eq:3.14}\end{equation} with rank $\underline{\phi}_{1}$ =2.

Since $\phi$ is holomorphic, it is possible to  obtain its local section $f_0$ such that $\underline{f}_0$ is a holomorphic subbundle of $\underline{\phi}$. Without loss of generality, we assume
that $\overline{\partial} f_0= 0$. Therefore $\underline{f}_0$ is a linearly full harmonic map from $S^2$ to $\mathbb{C}P^n$ for some $n < N$ and belongs to the following harmonic sequence
$$0\stackrel{\partial{'}}
{\longrightarrow}\underline{f}_0\stackrel{\partial{'}} {\longrightarrow}
\underline{f}_{1}\stackrel{\partial{'}}
{\longrightarrow}\cdots \stackrel{\partial{'}} {\longrightarrow}
\underline{f}_{n}\stackrel{\partial{'}}
{\longrightarrow}0.$$

 From the fact rank $\underline{\phi}$=2, we immediately see that there exists another local section $\alpha$ of $\underline{\phi}$ such that
$\underline{\phi}=\underline{\alpha}\oplus \underline{f}_0$.  Set $$\alpha_1=\partial \alpha-\frac{\langle \partial \alpha, \alpha\rangle}{|\alpha|^2}\alpha, \ \alpha_{-1}=\overline{\partial} \alpha-\frac{\langle \overline{\partial} \alpha, \alpha\rangle}{|\alpha|^2}\alpha,   \ \beta=A^{'}_{\phi}(f_0), \ \underline{\gamma}=\underline{\beta}^{\bot}\cap \underline{\phi}_{1}.$$
By Theorem 2.4 of \cite{Burstall} and \eqref{eq:3.14},  we have a new harmonic map $$\underline{\alpha}\oplus \underline{\beta}: S^2 \rightarrow G(2,N; \mathbb{C}),$$  which
belongs to the following harmonic sequence:
$$0\stackrel{\partial{''}}
{\longleftarrow}\underline{f}_0\stackrel{\partial{''}}
{\longleftarrow}\underline{\alpha}\oplus \underline{\beta}\stackrel{\partial{'}} {\longrightarrow}
\cdots\stackrel{\partial{'}}
{\longrightarrow}0.$$
 From it we  arrive at the following equations

 $$\alpha_{1}=\frac{\langle \partial \alpha, \beta\rangle}{|\beta|^2}\beta+\frac{\langle \partial \alpha, \gamma\rangle}{|\gamma|^2}\gamma, \ \alpha_{-1}=-\frac{\langle \alpha, f_{1}\rangle}{|f_{0}|^2}f_0.$$
By making use of  $\phi=\frac{\alpha\alpha^{*}}{|\alpha|^2}+\frac{f_0f_0^{*}}{|f_0|^2}$,  it is an elementary exercise  to show that
$$ A_{\overline{z}} = \frac{\langle  \beta, f_{1}\rangle}{|f_{0}|^2|\beta|^2} f_0\beta^{*} +
\frac{1}{|\alpha|^2}\alpha\alpha_{1}^{*},     \quad    \lambda^2=\frac{\langle \beta, f_{1}\rangle\langle f_{1}, \beta\rangle}{|f_{0}|^2|\beta|^2}+\frac{|\alpha_{1}|^2}{|\alpha|^2},$$

$$
\begin{array}{lll} [A_{\overline{z}}, [A_z, A_{\overline{z}}]] & = &-2\frac{\langle f_{1}, \beta\rangle\langle \beta, f_{1}\rangle\langle \beta, f_{1}\rangle}{|f_{0}|^4|\beta|^4} f_0\beta^{*}-
2\frac{\langle f_{1}, \beta\rangle\langle \beta, f_{1}\rangle\langle  \beta, \alpha_{1}\rangle}{|f_{0}|^2|\alpha|^2|\beta|^4}\alpha\beta^{*}\\
& &-2\frac{\langle \beta,  f_{1} \rangle\langle \alpha_{1}, \beta\rangle}{|f_{0}|^2|\alpha|^2|\beta|^2}f_0\alpha_{1}^{*}
-2\frac{|\alpha_{1}|^2}{|\alpha|^4}\alpha\alpha_{1}^{*}.\end{array} $$

From the supposition that $\phi$ is of parallel second fundamental form, then the following can be easily checked
$$\left\{
\begin{array}{l} M_1\beta = -2\frac{\langle f_{1}, \beta\rangle\langle \beta, f_{1}\rangle}{|f_{0}|^2|\beta|^2}\beta-2\frac{\langle \beta, \alpha_{1}\rangle}{|\alpha|^2}\alpha_{1}, \\
M_1\alpha_{1} = -2\frac{\langle f_{1}, \beta\rangle\langle \beta, f_{1}\rangle\langle  \alpha_{1}, \beta\rangle}{|f_{0}|^2|\beta|^4}\beta-2\frac{|\alpha_{1}|^2}{|\alpha|^2}\alpha_{1}
\end{array}\right.$$ by the relation $[A_{\overline{z}}, [A_z, A_{\overline{z}}]]=M_1A_{\overline{z}}$ given in Lemma 3.1. From it straightforward computations show
\begin{equation}\langle \alpha_{1}, \beta\rangle=0, \ -M_1=2\frac{\langle \beta, f_{1}\rangle\langle f_{1}, \beta\rangle}{|f_{0}|^2|\beta|^2}=2\frac{|\alpha_{1}|^2}{|\alpha|^2}=\lambda^2,\label{eq:3.15}\end{equation}
which establishes that \begin{equation}K+\frac{\|B\|^2}{2}=2.\label{eq:3.16}\end{equation}
Set $\lambda_1=-\frac{\langle f_{1}, \beta\rangle}{\lambda^2|f_{0}|^2|\beta|^2}, \ \lambda_2=-\frac{1}{\lambda^2|\alpha|^2}$, applying  equation $P=\partial(\frac{A_z}{\lambda^2})$ it follows that
\begin{equation}
 P = \partial \lambda_1  \beta f_0^{*} + \lambda_1\partial\beta f_0^{*} + \partial \lambda_2 \alpha_{1}\alpha^{*}+ \lambda_2\partial\alpha_1\alpha^{*}+\lambda_2\alpha_1(\overline{\partial}\alpha)^{*}. \label{eq:3.17}\end{equation}
Then under the assumption $\nabla B=0$ and using Lemma 3.1, $[[A_{\overline{z}}, A_z], P]=M_2P$ is equivalent to
\begin{equation}
\begin{array}{lll} \lambda_1(M_2-\frac{|\alpha_{1}|^2}{|\alpha|^2})\partial\beta & = &[2\partial\lambda_{1}\frac{|\alpha_{1}|^2}{|\alpha|^2}+\lambda_1\frac{|\alpha_{1}|^2\langle \partial\beta, \beta\rangle}{|\alpha|^2|\beta|^2}-M_2\partial\lambda_{1}]\beta \\
& &+[\lambda_1\frac{\langle \partial\beta, \alpha_{1}\rangle}{|\alpha|^2}-2\lambda_2\frac{|\alpha_{1}|^2\langle f_{1}, \alpha\rangle}{|\alpha|^2|f_{0}|^2}+\lambda_2M_2\frac{\langle f_{1}, \alpha\rangle}{|f_{0}|^2}]\alpha_{1},\end{array} \label{eq:3.18}\end{equation}
and
\begin{equation}
\begin{array}{lll} \lambda_2(M_2-\frac{|\alpha_{1}|^2}{|\alpha|^2})\partial \alpha_{1} & = &[2\partial\lambda_{2}\frac{|\alpha_{1}|^2}{|\alpha|^2}+2\lambda_2\frac{|\alpha_{1}|^2\langle \alpha, \overline{\partial} \alpha\rangle}{|\alpha|^4}+\lambda_2\frac{\langle \partial \alpha_{1}, \alpha_{1}\rangle}{|\alpha|^2}\\
& &-M_2\partial\lambda_{2}-M_2\lambda_2\frac{\langle \alpha, \overline{\partial} \alpha\rangle}{|\alpha|^2}]\alpha_{1}\\
& &+\lambda_2\frac{|\alpha_{1}|^2\langle \partial\alpha_1, \beta\rangle}{|\alpha|^2|\beta|^2}\beta.\end{array} \label{eq:3.19}\end{equation}

With this, we have therefore conclude

\begin{prop}
Let $\phi: S^2\rightarrow G(2,N;\mathbb{C})$ be a  linearly full $\partial^{'}$-irreducible  holomorphic curve with parallel  second fundamental form, then $\phi$  is  congruent to cases (4) or (5) in Theorem 1.1.
\end{prop}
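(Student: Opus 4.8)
The plan is to exploit the relations \eqref{eq:3.15}--\eqref{eq:3.19} together with the identity $K+\tfrac12\|B\|^2=2$ from \eqref{eq:3.16}. First I would eliminate $M_2$: by \eqref{eq:3.6} one has $M_2=\lambda^2(K-\tfrac14\|B\|^2)$, so with $\|B\|^2=4-2K$ this becomes $M_2=\lambda^2(\tfrac32K-1)$, while \eqref{eq:3.15} gives $|\alpha_1|^2/|\alpha|^2=\lambda^2/2$ and $\langle f_1,\beta\rangle\neq0$. Hence
$$M_2-\frac{|\alpha_1|^2}{|\alpha|^2}=\frac{3\lambda^2}{2}\,(K-1),$$
and since $K$ is constant the whole discussion splits according to whether $K=1$ (equivalently $\|B\|^2=2$ and $M_2=|\alpha_1|^2/|\alpha|^2$) or $K\neq1$. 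Because $\langle\alpha_1,\beta\rangle=0$ with $\alpha_1\in\underline{\phi}_1$, we may also take $\gamma=\alpha_1$, so that $\underline{\phi}_1=\underline{\beta}\oplus\underline{\alpha}_1$ throughout.

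Consider first $K\neq1$. Then $M_2-|\alpha_1|^2/|\alpha|^2$ never vanishes, and $\lambda_1\neq0$ (from $\langle f_1,\beta\rangle\neq0$), $\lambda_2=-1/(\lambda^2|\alpha|^2)\neq0$; dividing \eqref{eq:3.18} and \eqref{eq:3.19} by the nonzero coefficient shows $\partial\beta,\partial\alpha_1\in\mathrm{span}\{\beta,\alpha_1\}=\underline{\phi}_1$, so $\underline{\phi}_1$ is closed under $\partial^{'}$ and $\underline{\phi}_2=0$. Since $\phi$ is holomorphic, $\overline{\partial}\underline{\phi}\subseteq\underline{\phi}$; together with the general inclusions $\partial\underline{\phi}\subseteq\underline{\phi}\oplus\underline{\phi}_1$, $\overline{\partial}\underline{\phi}_1\subseteq\underline{\phi}\oplus\underline{\phi}_1$ and $\partial\underline{\phi}_1\subseteq\underline{\phi}_1$, this makes $\underline{\phi}\oplus\underline{\phi}_1$ a parallel (hence constant) subbundle of $\underline{\mathbb{C}}^N$; linear fullness forces $\underline{\phi}\oplus\underline{\phi}_1=\underline{\mathbb{C}}^N$, i.e.\ $N=4$. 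I would then argue, by the same type of computation as in Subsection 3.1, that $\underline{f}_0$ is a linearly full holomorphic curve of constant curvature in some $\mathbb{C}P^{p}$, so by Lemma 2.2 it is $\underline{V}^{(p)}_0$ up to a unitary transformation, and the complementary line bundle $\underline{\alpha}$ is likewise a Veronese. Counting dimensions (the $\partial^{'}$-sequence of $\underline{f}_0$ together with that of $\underline{\alpha}$ must fill $\mathbb{C}^4$) forces $p=1$ and $\underline{\alpha}=\widehat{\underline{V}}^{(1)}_0$, so $\phi=\widehat{\underline{V}}^{(1)}_0\oplus\underline{V}^{(1)}_0$ up to $U(4)$; plugging \eqref{eq:2.10} into the curvature formulae \eqref{eq:2.6} gives $\lambda^2=2(1+z\overline z)^{-2}$, $K=2$, $\|B\|^2=0$, which is case (4) of Theorem 1.1.

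Now consider $K=1$. Then the left-hand sides of \eqref{eq:3.18} and \eqref{eq:3.19} vanish, and since $\beta\perp\alpha_1$ with both nowhere zero, every coefficient on the right vanishes separately. The $\beta$-component of \eqref{eq:3.19} yields $\langle\partial\alpha_1,\beta\rangle=0$, and after substituting $\lambda_1,\lambda_2$ and $M_2=|\alpha_1|^2/|\alpha|^2$, the $\beta$-component of \eqref{eq:3.18} and the $\alpha_1$-component of \eqref{eq:3.19} reduce to first-order identities among $\partial\log|f_0|^2$, $\partial\log|\beta|^2$, $\partial\log|\alpha|^2$, $\partial\log|\alpha_1|^2$ and the connection terms $\langle\partial\beta,\beta\rangle$, $\langle\alpha,\overline{\partial}\alpha\rangle$, $\langle\alpha_1,\overline{\partial}\alpha_1\rangle$; together with $K=1$ constant these force $|f_0|^2$ and $|\alpha|^2$ to be constant multiples of powers of $1+z\overline z$, so both $\underline{f}_0$ and $\underline{\alpha}$ have constant curvature. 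By Lemma 2.2 each is then a Veronese surface, and the identity $\lambda^2=2|\alpha_1|^2/|\alpha|^2$ pins the common degree down to $2$ (so $K=2/2=1$), hence $N=6$ and, up to $U(6)$, $\phi=\widehat{\underline{V}}^{(2)}_0\oplus\underline{V}^{(2)}_0$ with $K=1$, $\|B\|^2=2$ — case (5) of Theorem 1.1. In both cases one finally checks, using \eqref{eq:3.2}, that the listed immersions really do have parallel second fundamental form.

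I expect the main obstacle to be not the dichotomy but the bookkeeping that upgrades "constant curvature of $\underline{f}_0$ and $\underline{\alpha}$" to the exact congruences: one has to use the vanishing coefficients in \eqref{eq:3.18}--\eqref{eq:3.19} to show that $\underline{\alpha}$ is holomorphic, that $\langle f_1,\alpha\rangle=0$ (so $\beta=f_1$), and that the $\partial^{'}$-sequence of $\underline{f}_0$ is orthogonal to $\underline{\alpha}$ — this is precisely what fixes the degrees of the Veronese factors and the value of $N$, and in the case $K\neq1$ it is also what excludes the a priori possibility $p\geq3$. Once that is in place, the remaining verification is the same kind of explicit Veronese-data computation already carried out in Subsections 3.1 and 3.2.
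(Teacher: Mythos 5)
Your proposal follows the same skeleton as the paper's proof: the dichotomy you set up via $M_2-\tfrac{|\alpha_1|^2}{|\alpha|^2}=\tfrac{3\lambda^2}{2}(K-1)$ is exactly the paper's case split ($M_2=\tfrac{|\alpha_1|^2}{|\alpha|^2}$ versus $M_2\neq\tfrac{|\alpha_1|^2}{|\alpha|^2}$), and in both branches you extract the same consequences \eqref{eq:3.20}--\eqref{eq:3.22} from \eqref{eq:3.18}--\eqref{eq:3.19}. Where you diverge is the endgame. In the branch $K\neq 1$ the paper substitutes \eqref{eq:3.20}--\eqref{eq:3.22} back into \eqref{eq:3.17} to get $P=0$, hence $\|B\|^2=0$ and $K=2$ from \eqref{eq:3.16}, and then quotes the classification of such surfaces in $G(2,4;\mathbb{C})$ from \cite{Chi} and \cite{Fei}; your alternative (constancy of $\underline{\phi}\oplus\underline{\phi}_1$ forcing $N=4$, then a dimension count) reaches the same place but without ever recording $P=0$, which is the cheapest way to see $K=2$ and to rule out the configurations $(p,q)=(2,0),(0,2)$ you list. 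In the branch $K=1$ the paper's key move is one you do not have: it builds the unitary frame $e_1,\dots,e_4$, computes $|\det\Omega_0|^2=(\lambda^2)^2/4$, and applies the unintegrated Pl\"{u}cker formula to get $L_1=L_0$, so that $\underline{\phi}_1$ is \emph{totally real} of constant curvature $\tfrac12$ and the known classification of such spheres (\cite{Berndt}, \cite{HP2}, \cite{Q4}) finishes the argument. Your substitute -- deducing directly from the vanishing coefficients that $\underline{f}_0$ and $\underline{\alpha}$ are orthogonal degree-two Veronese curves -- is plausible and the numerology ($\lambda^2=2|\alpha_1|^2/|\alpha|^2$ forcing degree $2$ and $N=6$) checks out, but the steps you yourself flag as "bookkeeping" (holomorphicity of $\underline{\alpha}$, $\langle f_1,\alpha\rangle=0$, and constancy of the induced metrics on $\underline{f}_0$ and $\underline{\alpha}$) are precisely the content that the Pl\"{u}cker-formula route delivers in one stroke; as written they remain unproved assertions rather than consequences of \eqref{eq:3.20}--\eqref{eq:3.22}. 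So the proposal is correct in structure and in both final answers, but to be complete it either needs those orthogonality and constancy claims carried out in detail, or it should fall back on the paper's reduction to the totally real and totally geodesic classification theorems.
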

\begin{proof}
In our prove, by observing equations   \eqref{eq:3.18} and \eqref{eq:3.19},   we discuss the rigidity of $\phi$ by cases $M_2=\frac{|\alpha_{1}|^2}{|\alpha|^2}$ and
 $M_2 \neq \frac{|\alpha_{1}|^2}{|\alpha|^2}$ respectively.

\emph{Firstly we discuss the case $M_2=\frac{|\alpha_{1}|^2}{|\alpha|^2}$.}
In this case we immediately have $$M_2=\frac{\lambda^2}{2},$$ which shows  $$K=1, \ \|B\|^2=2$$ by combining it with \eqref{eq:3.16}. Therefore it follows from \eqref{eq:3.18} and \eqref{eq:3.19} that
\begin{equation} \langle \partial\alpha_1,   \beta\rangle =0, \ \partial\lambda_1+\lambda_1\frac{\langle  \partial \beta, \beta\rangle}{|\beta|^2} =0,\label{eq:3.20}\end{equation}
\begin{equation}\partial\lambda_2+\lambda_2\frac{\langle  \partial \alpha_1, \alpha_1\rangle}{|\alpha_1|^2} + \lambda_2\frac{\langle \alpha, \overline{\partial} \alpha \rangle}{|\alpha|^2}=0,\label{eq:3.21}\end{equation}
\begin{equation}\lambda_1\frac{\langle \partial\beta, \alpha_1 \rangle}{|\alpha_{1}|^2}-\lambda_2\frac{\langle f_{1}, \alpha\rangle}{|f_{0}|^2}=0.\label{eq:3.22}\end{equation}
Choose local frame
$$e_1 = \frac{f_0}{|f_0|},  \  e_2 = \frac{\alpha}{|\alpha|}, \
e_3 = \frac{\beta}{|\beta|},  \ e_4 = \frac{\alpha_{1}}{|\alpha_{1}|},$$ by the first relation in  \eqref{eq:3.15} we know that this frame is  unitary.
Set $W_0 = (e_1, e_2), \quad W_1 = (e_3, e_4)$, then by \eqref{eq:2.4}, we obtain $$\Omega_0 = \left(\begin{array}{ccccccc}
 \frac{\langle  f_{1}, \beta\rangle}{|f_{0}||\beta|} & 0 \\
0  & \frac{|\alpha_{1}|}{|\alpha|} \end{array}\right).$$
This together with  equation  \eqref{eq:3.15} implies that
\begin{equation}|\det \Omega_0|^2 = \frac{(\lambda^2)^2}{4}. \label{eq:3.23}\end{equation}

Since $\phi: S^2 \rightarrow G(2,N;\mathbb{C})$ is a harmonic map with constant curvature $K=1$,   complex coordinate $z$
on $\mathbb{C} =S^2 \backslash \{pt\}$ can be chosen so that  the induced  metric
$ds^2 = \lambda^2dz d{\overline{z}}$ is given by
$ds^2 =
\frac{4}{(1+z\overline{z})^2} dz d{\overline{z}}$, where
$L_0
=\frac{4}{(1+z\overline{z})^2}$. 
Therefore  we clearly get
$$L_1 = L_0 $$
from the unintegrated Pl\"{u}cker formulae (cf. \cite{Jiao2008})
$$\partial \overline{\partial} \log|\det \Omega_0|^2 = - 2
L_{0} + L_{1}$$ and \eqref{eq:3.23},
which implies that $\phi_1:S^2\rightarrow G(2,N;\mathbb{C})$ is totally real with constant curvature $\frac{1}{2}$. Then from \cite{Berndt, HP2}  and \cite{Q4}, adding zeros to the end of  $V^{(2)}_1$ such that it belongs to $\mathbb{C}^6$,
$\underline{\phi}_{1}=\overline{\underline{UV}}^{(2)}_1\oplus \underline{UV}^{(2)}_1:S^2 \rightarrow G(2,6;\mathbb{C})$ or  $\underline{\phi}_{1}=\textbf{\underline{J}}\underline{UV}^{(2)}_1\oplus \underline{UV}^{(2)}_1:S^2 \rightarrow G(2,6;\mathbb{C})$ for some $U\in U(6)$, so we have
$$\underline{\phi}=\overline{\underline{UV}}^{(2)}_2\oplus \underline{UV}^{(2)}_0: S^2 \rightarrow G(2,6;\mathbb{C})$$ or  $$\underline{\phi}=\textbf{\underline{J}}\underline{UV}^{(2)}_2\oplus \underline{UV}^{(2)}_0:S^2 \rightarrow G(2,6;\mathbb{C}),$$
here $\textbf{J}: \mathbb{C}^6\rightarrow \mathbb{C}^6$ is the conjugate linear map given by left multiplication by $j$ (cf. \cite{HP2}).
In the absence of confusion, let $V^{(2)}_0=(1, \ \sqrt{2}z, \ z^2, \ 0, \ 0, \ 0)^T$ and $\widehat{V}^{(2)}_0=(0, \ 0, \ 0, \ 1, \ \sqrt{2}z, \ z^2)^T$ (we shall use such notions repeated below),   in summary, up to an isometry of $G(2,N; \mathbb{C})$, $\phi$ can be expressed as
$$\underline{\phi}=\underline{U}\underline{\widehat{V}}^{(2)}_0\oplus \underline{UV}^{(2)}_0: S^2 \rightarrow G(2,6; \mathbb{C}),$$ which is of parallel second fundamental form with $K=1$ and $\|B\|^2=2$, and it is congruent to the case (5) in Theorem 1.1.

\emph{Next we discuss the case $M_2\neq\frac{|\alpha_{1}|^2}{|\alpha|^2}$.} In this case, using Theorem 4.5 of \cite{Jiao2012}, we get $$\partial\alpha_{1}=\frac{\langle \partial \alpha_{1}, \alpha_{1} \rangle}{|\alpha_{1}|^2}\alpha_{1}+\frac{\langle \partial \alpha_{1},\beta \rangle}{|\beta|^2}\beta, \ \partial\beta =\frac{\langle \partial \beta, \alpha_{1} \rangle}{|\alpha_{1}|^2}\alpha_{1} + \frac{\langle \partial \beta, \beta \rangle}{|\beta|^2}\beta,$$ and
relations \eqref{eq:3.20}-\eqref{eq:3.22}
from the facts $M_2\neq \frac{|\alpha_{1}|^2}{|\alpha|^2}, \ M_2\neq \lambda^2$ and conditions shown in \eqref{eq:3.18} and \eqref{eq:3.19}.

Then substituting \eqref{eq:3.20}-\eqref{eq:3.22} into \eqref{eq:3.17}, we get $P=0$, i.e. $\phi: S^2\rightarrow G(2,4;\mathbb{C})$ is totally geodesic  with constant curvature $K=2$ from \eqref{eq:3.16}, the harmonic sequence given in \eqref{eq:3.14}  becomes

$$0\stackrel{\partial{'}}
{\longrightarrow}\underline{\phi}\stackrel{\partial{'}} {\longrightarrow}
\underline{\phi}_{1}\stackrel{\partial{'}}
{\longrightarrow}0.$$
Adding zeros to $V^{(1)}_0$,  let $V^{(1)}_0=(1, \ z,  \ 0, \ 0)^T$ and $\widehat{V}^{(1)}_0=(0, \ 0,  \ 1, \ z)^T$, it follows from  \cite{Chi} and \cite{Fei} that, up to an isometry of $G(2,4;\mathbb{C})$,
$$\underline{\phi}=\underline{U}\underline{\widehat{V}}^{(1)}_0\oplus \underline{UV}^{(1)}_0: S^2 \rightarrow G(2,4; \mathbb{C})$$ is totally geodesic with $K=2$ for some $U\in U(4)$, and it is congruent to the case (4) in Theorem 1.1. In summary we get the conclusion.
\end{proof}

Propositions 3.4 and 3.5 give a classification of all linearly full holomorphic  maps from $S^2$ to $G(2,N;\mathbb{ C})$ with parallel second fundamental form, from them we get Theorem 1.1 in Section 1.

\section{Minimal two-spheres with parallel second fundamental form and rank $\partial{'}\underline{\phi} = $ rank $\partial{''}\underline{\phi} = 1$}

Accordingly, in this section, we consider conformal minimal immersions $\phi: S^2 \rightarrow G(2,N;\mathbb{C})$ under the assumption that $\nabla B=0$ and rank $\partial{'}\underline{\phi} = $ rank $\partial{''}\underline{\phi} = 1$.
To characterize $\phi$,  we first prove the following property:

\begin{prop}
Let $\phi: S^2\rightarrow G(2,N;\mathbb{C})$ be a linearly full conformal minimal immersion with the second fundamental form $B$. Suppose
that $B$ is parallel and rank $\partial{'}\underline{\phi} = $ rank $\partial{''}\underline{\phi} = 1$, then $\phi$ belongs to one of the following
cases.
\\ (i) $\phi$ is a Frenet pair, i.e. $\underline{\phi} = \underline{f}_{i-1} \oplus
\underline{f}_i $, where $\underline{f}_i:S^2\rightarrow \mathbb{C}P^{n}$ is harmonic and $N =
 n+1$;
 \\ (ii) $\phi$ is a  mixed pair, i.e. $\underline{\phi} ={\underline{f}}_0\oplus
\underline{g}_m$, where $\underline{f}_0: S^2\rightarrow \mathbb{C}P^{n}$ is
holomorphic and $\underline{g}_m: S^2\rightarrow \mathbb{C}P^{m}$ is
anti-holomorphic;
 \\ (iii) $\underline{\phi} = \underline{f}_i \oplus
\underline{c}_0 $, where $c_0=(0,0, \ldots, 0, 1)^T$ in $\mathbb{C}^N$, $\underline{f}_i:
S^2\rightarrow \mathbb{C}P^{n}$ is harmonic and $N =
 n+2$.
\end{prop}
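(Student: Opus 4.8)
The plan is to prove Proposition 4.1 by combining the structure theory of harmonic sequences in $G(2,N;\mathbb{C})$ (the Burstall--Wood classification, as quoted in Lemma 3.3) with the constancy of the Gauss curvature that follows from $\nabla B = 0$. The hypothesis rank $\partial'\underline{\phi} = $ rank $\partial''\underline{\phi} = 1$ means that both the $\partial'$-transform and the $\partial''$-transform of $\underline{\phi}$ drop rank by exactly one, so $\underline{\phi}$ is simultaneously $\partial'$-reducible and $\partial''$-reducible. Therefore $\underline{\ker}A'^{\perp}_{\phi}$ and $\underline{\ker}A''^{\perp}_{\phi}$ are each line subbundles of $\underline{\phi}$, and there is no room for a third independent direction; this is exactly the combinatorial situation governed by Lemma 3.3 and its $\partial''$-analogue.

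First I would make precise, starting from rank $\partial'\underline{\phi} = 1$, that the image bundle $\partial'\underline{\phi} = \underline{\phi}_1$ has rank one, so that $A'_{\phi}$ has one-dimensional image; dually rank $\partial''\underline{\phi} = 1$ gives that $A''_{\phi}$ has one-dimensional image. Next I would argue that the kernel conditions needed to invoke Lemma 3.3 hold: the $\partial''$-reducibility forces $A''_{\phi}$ to kill the line $\underline{\ker}A'^{\perp}_{\phi}$ (because the two kernels, being complementary candidates inside the rank-two bundle $\underline{\phi}$, must be arranged so that the ``second fundamental form'' term does not create extra rank), which is precisely the hypothesis $A''_{\phi}(\underline{\ker}A'^{\perp}_{\phi}) = 0$ in Lemma 3.3. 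Applying Lemma 3.3 then yields that $\underline{\phi}$ is either a Frenet pair $\underline{f}_{i-1}\oplus\underline{f}_i$ coming from an anti-holomorphic $g:S^2\to\mathbb{C}P^{N-1}$, or a mixed pair $\underline{g}\oplus\underline{h}$ with $g$ anti-holomorphic and $h$ holomorphic. Passing to conjugates (or applying the mirror of Lemma 3.3 using rank $\partial'\underline{\phi}=1$) gives the symmetric description, so the Frenet-pair alternative becomes $\underline{\phi}=\underline{f}_{i-1}\oplus\underline{f}_i$ with $\underline{f}_i$ harmonic into $\mathbb{C}P^n$, $N=n+1$, which is case (i); the genuinely mixed case where neither $g$ nor $h$ is constant is case (ii).

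The remaining point is to isolate the degenerate sub-case of the mixed pair in which one of the two factors is a \emph{constant} line bundle, which is listed separately as case (iii). Here I would use the parallelism of $B$ together with the constant-curvature consequence: when $\underline{\phi}=\underline{f}_0\oplus\underline{g}_m$ and, say, $g_m$ is a constant vector, one may normalize it to $c_0=(0,\dots,0,1)^T$ after a unitary change of frame in $\mathbb{C}^N$, and the linear fullness of $\underline{\phi}$ forces $\underline{f}_0$ (here $\underline{f}_i$ in the statement) to be linearly full in a $\mathbb{C}P^{N-2}=\mathbb{C}P^n$, so $N=n+2$. In the non-degenerate mixed case both $g$ and $h$ are non-constant and we stay in case (ii). I would finish by observing that the three cases are mutually exclusive by inspection of ranks and of whether a constant direction occurs.

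The main obstacle I anticipate is the bookkeeping needed to check that the kernel hypothesis $A''_{\phi}(\underline{\ker}A'^{\perp}_{\phi})=0$ of Lemma 3.3 really is \emph{forced} by the two rank conditions rather than being an extra assumption: one must rule out the possibility that $A'_{\phi}$ and $A''_{\phi}$ have kernels positioned ``transversally'' inside the rank-two bundle $\underline{\phi}$ in a way that is still consistent with both transforms dropping rank. I expect this to follow from a dimension count on $\underline{\phi}$ combined with the harmonicity equation \eqref{eq:2.1} (equivalently the integrability conditions for \eqref{eq:2.4}), which relate $\Omega_i$ and $\Psi_i$, but making the argument airtight — and similarly verifying that no extra sub-cases of Lemma 3.3 survive the parallelism constraint — is the delicate part; everything after that is a routine application of the Burstall--Wood dichotomy plus normalization of the constant vector.
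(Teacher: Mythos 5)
Your overall strategy (reduce to the Burstall--Wood dichotomy of Lemma 3.3) is in the right family of ideas, but the proposal has two genuine gaps, both located exactly at the points you yourself flagged as delicate.

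First, the hypothesis $A''_{\phi}(\underline{ker}A^{'\perp}_{\phi})=0$ of Lemma 3.3 is \emph{not} forced by the two rank conditions. Rank $\partial{'}\underline{\phi}=$ rank $\partial{''}\underline{\phi}=1$ only says that each of $A'_{\phi}$ and $A''_{\phi}$ has a line-bundle kernel inside the rank-two bundle $\underline{\phi}$; no dimension count, and nothing in the integrability conditions for \eqref{eq:2.4}, prevents $\underline{ker}\,A''_{\phi}$ from being a line other than $\underline{ker}A^{'}_{\phi}$, i.e.\ the ``transversal'' configuration you worry about genuinely occurs for harmonic maps with these ranks (this is precisely why Burstall--Wood treat the alternative $A''_{\phi}(\underline{ker}A^{'\perp}_{\phi})\neq 0$ as a separate case, quoted here as Lemma 5.1). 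The paper eliminates it by \emph{using} $\nabla B=0$: writing $\underline{\phi}_1=\underline{f}_{i+1}$, $\underline{\phi}_{-1}=\underline{g}_{j-1}$ and letting $f_i$, $g_j$ be the induced sections of $\underline{\phi}$, it computes $A_{\overline{z}}$ and $[A_{\overline{z}},[A_z,A_{\overline{z}}]]$ explicitly and applies the first equation of Lemma 3.1 to $f_{i+1}$; the resulting identity forces $\langle f_i,g_j\rangle=0$ whenever $\underline{f}_i\neq\underline{g}_j$, and this orthogonality is exactly the kernel hypothesis of Lemma 3.3. So the parallelism of $B$ enters already at this structural step, not merely in the later curvature normalizations.

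Second, case (iii) is not a degenerate mixed pair with one constant factor. In (iii) the bundle $\underline{f}_i$ is a \emph{middle} term of a harmonic sequence in $\mathbb{C}P^{n}$, in general neither holomorphic nor anti-holomorphic, so $\underline{f}_i\oplus\underline{c}_0$ is not of the form $\underline{g}\oplus\underline{h}$ produced by Lemma 3.3(ii). It arises in the complementary alternative $\underline{g}_j=\underline{f}_i$, where $A''_{\phi}(\underline{ker}A^{'\perp}_{\phi})\neq 0$ and Lemma 3.3 does not apply at all; there the paper takes the section $\rho$ with $\underline{\phi}=\underline{f}_i\oplus\underline{\rho}$, observes that both transforms annihilate $\rho$ (so $\overline{\partial}\rho$ is proportional to $\rho$), and concludes that $\rho$ is a constant vector orthogonal to all the $f_k$, giving (iii) with $N=n+2$. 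As written, your scheme would either miss this case entirely or misclassify it as a mixed pair.
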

\begin{proof}
The harmonic sequence  derived by $\phi$ via the $\partial^{'}$ and $\partial^{''}$-transforms is as follows:
\begin{equation}0 \stackrel{\partial{''}}
{\longleftarrow}\cdots\stackrel{\partial{''}} {\longleftarrow}
\underline{\phi}_{-1}\stackrel{\partial{''}}
{\longleftarrow} \underline{\phi}\stackrel {\partial{'}}
{\longrightarrow}\underline{\phi}_1  \stackrel{\partial{'}}
{\longrightarrow}\cdots \stackrel{\partial{'}}
{\longrightarrow}0. \label{eq:4.1}\end{equation}
Since $\phi$ is harmonic and rank $\partial{'}\underline{\phi} = $ rank $\partial{''}\underline{\phi} = 1$, it is possible to choose local sections $f_{i+1}, \ g_{j-1}$ of $\underline{\phi}_1$ and $\underline{\phi}_{-1}$ respectively such that $\underline{\phi}_1=\underline{f}_{i+1}, \ \underline{\phi}_{-1}=\underline{g}_{j-1}$, here $\underline{f}_{i+1}:S^2\rightarrow \mathbb{C}P^n$ and $\underline{g}_{j-1}:S^2\rightarrow \mathbb{C}P^m$ are both harmonic. Such \eqref{eq:4.1} can be rewritten as
\begin{equation}0\stackrel{\partial{''}}
{\longleftarrow}\underline{g}_0\stackrel{\partial{''}}
{\longleftarrow}\cdots\stackrel{\partial{''}}
{\longleftarrow}\underline{g}_{j-1}\stackrel{\partial{''}}
{\longleftarrow}\underline{\phi}\stackrel{\partial{'}} {\longrightarrow}
\underline{f}_{i+1}\stackrel{\partial{'}}
{\longrightarrow}\cdots\stackrel{\partial{'}} {\longrightarrow}
\underline{f}_{n}\stackrel{\partial{'}} {\longrightarrow}
0.\label{eq:4.2}\end{equation}
In \eqref{eq:4.2},  $g_j$ and $f_i$ are both local sections of $\underline{\phi}$.

If $\underline{g}_j = \underline{f}_i$.  Let
$\rho$ be the local section of $\underline{\phi}$ such that
$\underline{\phi}=\underline{f}_i\oplus\underline{\rho}$, then \eqref{eq:4.2}   becomes
$$0\stackrel{\partial{''}}
{\longleftarrow}\underline{f}_0\stackrel{\partial{''}}
{\longleftarrow}\cdots\stackrel{\partial{''}}
{\longleftarrow}\underline{f}_{i-1}\stackrel{\partial{''}}
{\longleftarrow}\underline{\phi}=\underline{f}_i\oplus\underline{\rho}\stackrel{\partial{'}} {\longrightarrow}
\underline{f}_{i+1}\stackrel{\partial{'}}
{\longrightarrow}\cdots\stackrel{\partial{'}} {\longrightarrow}
\underline{f}_{n}\stackrel{\partial{'}} {\longrightarrow}
0.$$
 By using  of $\langle\rho, f_{i-1}\rangle=\langle\rho, f_i\rangle=\langle\rho, f_{i+1}\rangle=0$, we get 
$\overline{\partial} \rho=\frac{\langle \overline{\partial} \rho, \rho\rangle}{|\rho|^2}\rho$. From it we have $\langle \rho, f_k\rangle=0, \
k=0,1,\ldots,n, \ N=n+2.$ 
Therefore $\rho$ is a constant section  in
$\mathbb{C}^N$ and $\phi$ belongs to  case (iii).

If $\underline{g}_j \neq \underline{f}_i$. In
this case we claim $\langle
f_i, g_j\rangle =0$, i.e. $A''_{\phi}(\underline{ker}A^{'\perp}_{\phi})=0$ and then, $\phi$ belongs to  cases (i) or (ii) by Lemma 3.3.
For this purpose we put $\alpha=g_j-\frac{\langle g_j, f_i\rangle}{|f_i|^2}f_i$,
then from  \eqref{eq:4.2}  and expression  $\phi=\frac{\alpha\alpha^{*}}{|\alpha|^2}+\frac{f_if_i^{*}}{|f_i|^2}$,  direct computations show that
$$A_{\overline{z}} = \frac{1}{|f_i|^2}f_{i}f_{i+1}^{*}+\frac{1}{|g_{j-1}|^2}g_{j-1}g_{j}^{*}$$ 
$$
\begin{array}{lll} [A_{\overline{z}}, [A_z, A_{\overline{z}}]] & = &-\frac{2|f_{i+1}|^2}{|f_i|^4}f_{i}f_{i+1}^{*}-\frac{2|g_{j}|^2}{|g_{j-1}|^4}g_{j-1}g_{j}^{*}+\frac{|g_j|^2\langle g_{j-1}, f_{i+1}\rangle}{|f_i|^2|g_{j-1}|^4}f_{i}g_{j-1}^{*}+\frac{|f_{i+1}|^2\langle f_{i}, g_{j}\rangle}{|g_{j-1}|^2|f_{i}|^4}g_{j-1}f_{i}^{*}\\
& & +\frac{\langle g_{j-1}, f_{i+1}\rangle}{|f_i|^2|g_{j-1}|^2}f_{i+1}g_{j}^{*}+\frac{\langle f_{i}, g_{j}\rangle}{|g_{j-1}|^2|f_{i}|^2}g_{j}f_{i+1}^{*}.\end{array} $$
Using $[A_{\overline{z}}, [A_z, A_{\overline{z}}]]f_{i+1} = M_1A_{\overline{z}}f_{i+1}$ we find
$$[M_1\frac{|f_{i+1}|^2}{|f_i|^2}+2\frac{|f_{i+1}|^4}{|f_i|^4}-\frac{|g_j|^2\langle f_{i+1}, g_{j-1}\rangle\langle g_{j-1}, f_{i+1}\rangle}{|f_i|^2|g_{j-1}|^4}]f_i=\frac{|f_{i+1}|^2\langle f_i, g_j\rangle}{|f_i|^2|g_{j-1}|^2}g_j,$$
which gives
$$\langle f_i, g_j\rangle=0$$ from the supposition $\underline{f}_i\neq \underline{g}_j$.Thus the proof of our property is complete.
\end{proof}

From Proposition 4.1, to finish the complete classification of $\phi$ in \eqref{eq:4.2} with parallel second fundamental form, we distinguish three cases respectively:
 $\phi$ is a Frenet pair; $\phi$ is a mixed pair; whereas $\underline{\phi}=\underline{f}_i\oplus \underline{c}_0$.

\begin{lemma}
Let $\phi: S^2\rightarrow G(2,N;\mathbb{C})$ be a linearly full Frenet pair with parallel  second fundamental form  and  rank $\partial{'}\underline{\phi} = $ rank $\partial{''}\underline{\phi} = 1$, then up to $U(4)$ equivalence,
$\phi$ belongs to case (1) in Theorem 1.2.
\end{lemma}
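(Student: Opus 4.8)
The plan is to start from the harmonic sequence \eqref{eq:4.2} with $\underline{\phi}=\underline{f}_{i-1}\oplus\underline{f}_i$ a linearly full Frenet pair in $G(2,N;\mathbb{C})$ coming from a linearly full harmonic map $\underline{f}_0\stackrel{\partial'}{\to}\cdots\stackrel{\partial'}{\to}\underline{f}_n$ in $\mathbb{C}P^n$ with $N=n+1$. Using the Frenet frame relations \eqref{eq:2.8}--\eqref{eq:2.9} and the expression $\phi=\frac{f_{i-1}f_{i-1}^{*}}{|f_{i-1}|^2}+\frac{f_if_i^{*}}{|f_i|^2}$, I would first record the basic quantities exactly as in \S3.1: namely $A_z$, $A_{\overline z}$, $\lambda^2=\frac{|f_{i+1}|^2}{|f_i|^2}+\frac{|f_{i-1}|^2}{|f_{i-2}|^2}$ (with the convention that the second term drops when $i=1$), the brackets $[A_z,A_{\overline z}]$, $[A_{\overline z},[A_z,A_{\overline z}]]$, $P=\partial(A_z/\lambda^2)$ and $[[A_{\overline z},A_z],P]$, each written in the $f$-frame. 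Then I would feed these into the two parallelism equations \eqref{eq:3.1} of Lemma 3.1, reading off coefficients against the rank-one pieces $f_{i}f_{i+1}^{*}$, $f_{i-2}f_{i-1}^{*}$, $f_{i+1}f_{i-1}^{*}$, $f_{i}f_{i-2}^{*}$, etc.

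The key structural observation I expect is that parallelism forces the quantities $\delta_k:=|f_{k+1}|^2/|f_k|^2$ to satisfy rigid algebraic constraints. Writing $u=\delta_{i-2}$, $v=\delta_{i-1}$, $w=\delta_i$, $t=\delta_{i+1}$, the first equation of \eqref{eq:3.1} (the one governing $A_{\overline z}$) should give two relations of the shape $M_1=-2w-(\text{correction in }u,v)$ and $M_1=-2u-(\text{correction})$, and the second equation governing $P$ should give analogous relations; combined with the definitions \eqref{eq:3.6} of $M_1,M_2$ and with constancy of $K$ this pins down $K$ and $\|B\|^2$ to discrete values. The cleanest route is: from the $A_{\overline z}$-equation extract that $K+\frac{\|B\|^2}{2}$ equals an explicit constant, from the $P$-equation extract $K-\frac{\|B\|^2}{4}$, solve the linear system for $(K,\|B\|^2)$, and check which values are consistent. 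This is the same mechanism that in \S3.1 produced $K=2,\|B\|^2=4$ in the holomorphic case; here I anticipate the surviving solution is $K=\tfrac23,\ \|B\|^2=\tfrac83$, forcing $n=3$ via the curvature formula \eqref{eq:2.6} applied to the Veronese-type metric, i.e. $N=4$.

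Once $n=3$ and the curvature is constant, I would invoke Lemma 2.2 (Bolton et al.): each $\underline{f}_k$ has constant curvature, so up to a holomorphic isometry of $\mathbb{C}P^3$ the sequence $\underline f_0,\dots,\underline f_3$ is the Veronese sequence $\underline{V}^{(3)}_0,\dots,\underline{V}^{(3)}_3$. It then remains to identify which consecutive pair $\underline{f}_{i-1}\oplus\underline{f}_i$ occurs. The isotropy/rank hypotheses (rank $\partial'\underline\phi=$ rank $\partial''\underline\phi=1$) together with linear fullness restrict $i$, and a direct check using Lemma 3.2 (equation \eqref{eq:3.2}) confirms that $\underline{V}^{(3)}_1\oplus\underline{V}^{(3)}_2$ does have parallel second fundamental form while the remaining candidates either fail \eqref{eq:3.2}, fail to be linearly full in $\mathbb{C}^4$, or violate the rank-one conditions. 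This yields exactly case (1) of Theorem 1.2, up to $U(4)$ equivalence.

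\textbf{Main obstacle.} The technical heart is the bookkeeping in the second equation of \eqref{eq:3.1}: computing $P$ and $[[A_{\overline z},A_z],P]$ for a general Frenet pair $\underline{f}_{i-1}\oplus\underline{f}_i$ (as opposed to the holomorphic case $i=1$, $f_{i-2}$ absent) produces more terms, and one must carefully track the $f_{i-3}f_{i-1}^{*}$ and $f_{i+1}f_{i-1}^{*}$ contributions to be sure no solution branch is overlooked. I expect the delicate point to be ruling out spurious solutions of the $(K,\|B\|^2)$ system and confirming that the only self-consistent possibility is the one with $n=3$; establishing the precise value of $i$ afterward is comparatively routine given the rank hypotheses and \eqref{eq:3.2}.
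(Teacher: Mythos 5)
Your proposal follows essentially the same route as the paper: express $\phi$ via $\frac{f_{i-1}f_{i-1}^{*}}{|f_{i-1}|^2}+\frac{f_if_i^{*}}{|f_i|^2}$, feed $A_{\overline z}$ and $P$ into the two equations of Lemma 3.1 to get $K+\frac{\|B\|^2}{2}=2$ and $M_2=0$ (i.e.\ $K=\frac{\|B\|^2}{4}$), solve to obtain $K=\frac23$, $\|B\|^2=\frac83$, $n=3$, then apply Bolton's rigidity and verify \eqref{eq:3.2} for $\underline{V}^{(3)}_1\oplus\underline{V}^{(3)}_2$. The only cosmetic difference is that the paper pins down $i=2$ directly from the relation $\frac{|f_{i+1}|^2}{|f_i|^2}=\frac{|f_{i-1}|^2}{|f_{i-2}|^2}$ (which forces $2i=n+1$), whereas you defer the choice of $i$ to a final case check; both are fine.
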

\begin{proof}
Let us first assume that $\underline{\phi}=\underline{f}_{i-1}\oplus \underline{f}_i:S^2\rightarrow G(2,N;\mathbb{C})$, where $\underline{f}_i:S^2\rightarrow \mathbb{C}P^n$ is harmonic with $N=n+1$ and $\phi$ belongs to the following harmonic sequence
\begin{equation}0\stackrel{\partial{''}}
{\longleftarrow}\underline{f}_{0}\stackrel{\partial{''}}
{\longleftarrow}\cdots\stackrel{\partial{''}}
{\longleftarrow}\underline{f}_{i-2}\stackrel{\partial{''}}
{\longleftarrow}\underline{\phi}=\underline{f}_{i-1} \oplus \underline{f}_i\stackrel{\partial{'}} {\longrightarrow}
\underline{f}_{i+1}\stackrel{\partial{'}}
{\longrightarrow}\cdots\stackrel{\partial{'}}
{\longrightarrow}\underline{f}_{n}\stackrel{\partial{'}}
{\longrightarrow}0,\label{eq:4.3}\end{equation}
$i\geq2, \ n\geq i+1$.  On the one hand, by making use of  $\phi=\frac{f_{i-1}f_{i-1}^{*}}{|f_{i-1}|^2}+\frac{f_if_i^{*}}{|f_i|^2}$ and  $[A_{\overline{z}}, [A_z, A_{\overline{z}}]]=M_1A_{\overline{z}}$ we have
 \begin{equation}M_1=-2\frac{|f_{i+1}|^2}{|f_i|^2}=-2\frac{|f_{i-1}|^2}{|f_{i-2}|^2}=- \lambda^2,\label{eq:4.4}\end{equation}
which further implies $l_{i}=l_{i-2}$ and thus $\delta^{(n)}_{i}=\delta^{(n)}_{i-2}$.
Since the second fundamental form of $\phi$ is parallel, its Gauss curvature is a constant. By a similar reasoning as the one when $\phi$ is a Frenet pair in  subsection 3.1,  the harmonic sequence \eqref{eq:4.3} is totally unramified, which gives $\delta^{(n)}_{i}=(i+1)(n-i), \ \delta^{(n)}_{i-2}=(i-1)(n-i+2)$. So we get
\begin{equation}2i=n+1, \quad K+\frac{\|B\|^2}{2}=2. \label{eq:4.5}\end{equation}
On the other hand, by using $\phi$ it is suffices to prove that 
$$2P = \frac{|f_{i-2}|^2}{|f_{i-1}|^2|f_{i-3}|^2}f_{i-1}f_{i-3}^{*}+ \frac{|f_{i}|^2}{|f_{i-1}|^2|f_{i+1}|^2}f_{i+1}f_{i-1}^{*}
-\frac{1}{|f_{i-1}|^2}f_{i}f_{i-2}^{*}-\frac{1}{|f_{i+1}|^2}f_{i+2}f_{i}^{*}$$ and 
$$2[[A_{\overline{z}}, A_z], P]=\frac{1}{|f_{i-3}|^2}f_{i-1}f_{i-3}^{*}-\frac{1}{|f_{i}|^2}f_{i+2}f_{i}^{*}.$$
Then $[[A_{\overline{z}}, A_z], P]f_{i-1}=M_2Pf_{i-1}$  is equivalent to 
\begin{equation}M_2=0, \quad K=\frac{\|B\|^2}{4}. \label{eq:4.6}\end{equation}
Therefore combining \eqref{eq:4.5}  and \eqref{eq:4.6}   we have
\begin{equation} \quad K=\frac{2}{3}, \quad \|B\|^2=\frac{8}{3}.\label{eq:4.7}\end{equation}
It follows from \eqref{eq:4.5} and
$$\frac{2}{3}=K=-\frac{2}{\lambda^2}\partial\overline\partial\log \lambda^2=-\frac{1}{l_i}\partial\overline\partial\log l_i=2-\frac{l_{i-1}+l_{i+1}}{l_i}=2-\frac{\delta^{(n)}_{i-1}+\delta^{(n)}_{i+1}}{\delta^{(n)}_i}$$
that
$$ i=2,\quad n=3.$$

By  relation in \eqref{eq:4.4} and \eqref{eq:4.7} we find that $\underline{f}_2: S^2 \rightarrow \mathbb{C}P^3$ is of constant curvature, using the rigidity
theorem of Bolton et al (\cite{Bolton}), up to a holomorphic isometry of
$\mathbb{C}P^{3}$,  $\underline{f}_2$ is a Veronese surface. We
can choose a complex coordinate $z$ on
$\mathbb{C}=S^2\backslash\{pt\}$ so that $ {f}_2 = {U}{V}^{(3)}_2$,
where $U\in U(4)$ and ${V}^{(3)}_2$ has the standard expression
given in Section 2.

Finally, in proving Lemma 4.2, it is easy to check that, for any $U\in U(4)$, $$\underline{\phi}=\underline{UV}^{(3)}_1\oplus \underline{UV}^{(3)}_2:S^2\rightarrow G(2,4;\mathbb{C})$$ is of parallel second fundamental form with $ K=\frac{2}{3}$ and $\|B\|^2=\frac{8}{3}$, thus the proof of our lemma is complete.
\end{proof}

Lemma 4.2 proves the case that $\phi$ is a Frenet pair, more interesting is naturally the case that  $\phi$ is a mixed pair, which we are going to suppose from now on. Without loss of generality, we express it by $$\underline{\phi}=\underline{g}_m \oplus \underline{f}_0: S^2\rightarrow G(2,N;\mathbb{C}),$$ where $\underline{f}_0: S^2\rightarrow \mathbb{C}P^{n}$ is
holomorphic and $\underline{g}_m: S^2\rightarrow \mathbb{C}P^{m}$ is
anti-holomorphic for some $m< N$ and $n<N$. Then we  have the following harmonic sequence:
$$0\stackrel{\partial{''}}
{\longleftarrow}\underline{g}_{0}\stackrel{\partial{''}}
{\longleftarrow}\cdots\stackrel{\partial{''}}
{\longleftarrow}\underline{g}_{m-1}\stackrel{\partial{''}}
{\longleftarrow}\underline{\phi}=\underline{g}_m \oplus \underline{f}_0\stackrel{\partial{'}} {\longrightarrow}
\underline{f}_{1}\stackrel{\partial{'}}
{\longrightarrow}\cdots\stackrel{\partial{'}}
{\longrightarrow}\underline{f}_{n}\stackrel{\partial{'}}
{\longrightarrow}0.$$
Basing on the formula  $\phi=\frac{g_mg_m^{*}}{|g_m|^2}+\frac{f_0f_0^{*}}{|f_0|^2}$ we have
$$A_{\overline{z}} = \frac{1}{|f_0|^2}f_0f_1^{*}+\frac{1}{|g_{m-1}|^2}g_{m-1}g_m^{*}, \quad
\lambda^2=\frac{|f_1|^2}{|f_0|^2}+\frac{|g_m|^2}{|g_{m-1}|^2},$$

$$ [A_{\overline{z}}, [A_z, A_{\overline{z}}]]  =  -\frac{2|f_1|^2}{|f_0|^4}f_0f_1^{*}-\frac{2|g_m|^2}{|g_{m-1}|^4}g_{m-1}g_m^{*}
+\frac{|g_m|^2\langle g_{m-1}, f_1\rangle}{|f_0|^2|g_{m-1}|^4}f_0g_{m-1}^{*}+\frac{\langle g_{m-1}, f_1\rangle}{|f_0|^2|g_{m-1}|^2}f_1g_{m}^{*}.$$
Then analyzing  $[A_{\overline{z}}, [A_z, A_{\overline{z}}]]=M_1A_{\overline{z}}$ we find
\begin{equation}[M_1+2\frac{|f_1|^2}{|f_0|^2}]f_1=\frac{|g_m|^2\langle  f_1, g_{m-1}\rangle}{|g_{m-1}|^4}g_{m-1}, \quad [M_1+2\frac{|g_m|^2}{|g_{m-1}|^2}]g_{m-1}=\frac{\langle g_{m-1}, f_1\rangle}{|f_0|^2}f_1.\label{eq:4.8}\end{equation}

Now we prove

\begin{lemma}
Let $\phi: S^2\rightarrow G(2,N;\mathbb{C})$ be a linearly full mixed pair with parallel  second fundamental form  and  rank $\partial{'}\underline{\phi} = $ rank $\partial{''}\underline{\phi} = 1$, then
$\phi$ is congruent to cases (2) (3) (4) or (5) in Theorem 1.2.
\end{lemma}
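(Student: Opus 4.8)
The plan is to analyze the system \eqref{eq:4.8} together with the second-fundamental-form equations from Lemma 3.1, exactly as in the Frenet-pair case of Lemma 4.2, but now exploiting the holomorphy of $\underline{f}_0$ and the anti-holomorphy of $\underline{g}_m$. First I would distinguish the two cases $\langle f_1, g_{m-1}\rangle = 0$ and $\langle f_1, g_{m-1}\rangle \neq 0$. In the first case, \eqref{eq:4.8} decouples and forces $M_1 = -2|f_1|^2/|f_0|^2 = -2|g_m|^2/|g_{m-1}|^2 = -\lambda^2$, which by \eqref{eq:3.6} gives $K + \|B\|^2/2 = 2$; then $|f_1|^2/|f_0|^2 = |g_m|^2/|g_{m-1}|^2 = \lambda^2/2$, and since $\log|f_0|^2$ and $\log|g_m|^2$ satisfy the standard Veronese-type Laplacian identities, comparing them pins down both $\mathbb{C}P^n$-pieces to have constant curvature. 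In the second case the two relations in \eqref{eq:4.8} are proportional, giving a quadratic constraint linking $|f_1|^2/|f_0|^2$, $|g_m|^2/|g_{m-1}|^2$ and $|\langle f_1,g_{m-1}\rangle|^2$; I would then differentiate to show these ratios are forced constant as well.

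Next I would compute $P = \partial(A_z/\lambda^2)$ explicitly in the mixed-pair frame (spanned by $f_{-1}$-type sections on the $g$-side and $f_2$-type sections on the $f$-side, together with $g_{m-1}$, $f_0$, $g_m$, $f_1$), and impose the second equation of \eqref{eq:3.1}, namely $[[A_{\overline z}, A_z], P] = M_2 P$. As in Lemma 4.2 this should split into scalar equations yielding a definite value (or small finite list of values) for $M_2$ and hence for $K$ and $\|B\|^2$. Combining the $M_1$-relation ($K + \|B\|^2/2 = 2$ in the orthogonal case, or its analogue) with the $M_2$-relation then determines $K$ and $\|B\|^2$ numerically, which in turn, via the induced-metric formula \eqref{eq:2.5} and the constant-curvature condition, forces the degrees $m$ and $n$ of the Veronese pieces to specific small values. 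For each admissible $(m,n,K,\|B\|^2)$ I would invoke Lemma 2.2 (Bolton et al.) to conclude that each of $\underline{f}_0,\ldots,\underline{f}_n$ and $\underline{g}_0,\ldots,\underline{g}_m$ is, up to a holomorphic isometry, a Veronese surface, and then identify the resulting $\underline{\phi}$ with one of the listed normal forms $\underline{V}^{(2)}_0\oplus\underline{V}^{(2)}_2$, $\widehat{\underline{V}}^{(1)}_0\oplus\underline{V}^{(1)}_1$, $\underline{V}^{(3)}_0\oplus\underline{V}^{(3)}_3$, or $\widehat{\underline{V}}^{(2)}_0\oplus\underline{V}^{(2)}_2$, checking in each case via \eqref{eq:3.2} that $B$ is indeed parallel with the stated $K$ and $\|B\|^2$.

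The main obstacle I expect is the bookkeeping in the second case $\langle f_1, g_{m-1}\rangle \neq 0$: there the harmonic sequence is genuinely ``mixed'', the frame is not orthogonal in the naive way, and the expressions for $P$ and $[[A_{\overline z},A_z],P]$ acquire several cross terms coupling the holomorphic and anti-holomorphic sides. Extracting from $[[A_{\overline z}, A_z], P] = M_2 P$ the right scalar identities — and showing that the various inner-product ratios appearing there are forced to be constant (so that constant curvature propagates down both arms of the sequence) — will require care, likely using the Plücker-type formulas and the fact that $\overline{\partial} f_0 = 0$, $\partial g_m = 0$ to kill the unwanted terms. Once constancy of all the ratios is established, the rigidity input from Lemma 2.2 and the dimension count are routine, and the final verification that the candidate maps satisfy \eqref{eq:3.2} is a direct (if lengthy) computation that I would only sketch.
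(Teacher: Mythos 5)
Your proposal follows essentially the same route as the paper: the dichotomy extracted from \eqref{eq:4.8} (which is exactly the paper's split into $\underline{f}_1=\underline{g}_{m-1}$ versus $\langle f_1,g_{m-1}\rangle=0$, since a nonzero inner product forces the two lines to coincide), then the relations $[A_{\overline z},[A_z,A_{\overline z}]]=M_1A_{\overline z}$ and $[[A_{\overline z},A_z],P]=M_2P$ to pin down $K$ and $\|B\|^2$, Bolton--Jensen--Rigoli--Woodward rigidity to force Veronese normal forms, and a final verification via \eqref{eq:3.2}. The only misjudgement is where you locate the difficulty: the case $\langle f_1,g_{m-1}\rangle\neq 0$ collapses immediately to the totally geodesic map $\underline{f}_0\oplus\underline{f}_2$ with no need to compute $P$, while the genuine sub-casework lives inside the orthogonal case ($m=n=1$ giving $P=0$; and for $m=n\ge 2$ the further dichotomy $\underline{g}_{m-1}=\underline{f}_2$ or $\langle f_2,g_{m-1}\rangle=0$, which is what separates cases (4) and (5) of Theorem 1.2).
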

\begin{proof}
From \eqref{eq:4.8}, to finish the classification of $\phi$, we distinguish two cases:
 $\underline{f}_1=\underline{g}_{m-1}$, whereas  $\underline{f}_1 \neq \underline{g}_{m-1}$.

\textbf{(a)}  $\underline{f}_1=\underline{g}_{m-1}$.
In this case we have
$$-M_1=\frac{|f_1|^2}{|f_0|^2}=\frac{|g_m|^2}{|g_{m-1}|^2}=\frac{\lambda^2}{2}$$
and $\underline{f}_{2}=\underline{g}_m$. Since $g_m$ is antiholomorphic, we have $\underline{f}_{2}$ is antiholomorphic, which establishes $$m=n=2.$$
where $\underline{f}_0, \ \underline{g}_{m}: S^2 \rightarrow \mathbb{C}P^2$ are both harmonic maps with  constant curvature. Therefore, by Lemma 2.2, there exists some $U\in U(3)$ s.t. $f_0=UV^{(2)}_0$, and the expression for $\phi$ becomes
$$ \underline{\phi}=\underline{f}_0\oplus \underline{f}_2=\underline{UV}_0^{(2)}\oplus \underline{UV}_2^{(2)}: S^2\rightarrow G(2,3;\mathbb{C}).$$
For any $U\in U(3)$, it can be easily calculated that such $\underline{\phi}=\underline{f}_0\oplus \underline{f}_2$ is totally geodesic with constant curvature $K=1$, and it is congruent to the case (2) in Theorem 1.2.

\textbf{(b)}  $\underline{f}_1 \neq \underline{g}_{m-1}$. In this case, it follows from relations shown in \eqref{eq:4.8} that $$\langle f_1, g_{m-1}\rangle=0, \ -M_1=2\frac{|f_1|^2}{|f_0|^2}=2\frac{|g_m|^2}{|g_{m-1}|^2}=\lambda^2,$$
which establishes \begin{equation}K+\frac{\|B\|^2}{2}=2, \quad m=n,\label{eq:4.9}\end{equation} and $\underline{f}_0, \ \underline{g}_{m}: S^2\rightarrow \mathbb{C}P^n$ are both harmonic maps  with constant curvature. Thus, by Lemma 2.2, they are both Veronese surfaces in $\mathbb{C}P^n$, up to a $U(n+1)$-motion.

If $m=n=1$. Here by a direct computation we immediately have $P=0$  and $\underline{\phi}=\underline{f}_0\oplus \underline{g}_1: S^2\rightarrow G(2,4;\mathbb{C})$ is totally geodesic with constant curvature $K=2$ under the assumption that $\{\underline{g}_0, \underline{g}_1\}$ and $\{\underline{f}_0, \underline{f}_1\}$ are mutually orthogonal in $\mathbb{C}P^1$ with constant curvature. Adding zeros to $V^{(1)}_0$ and $V^{(1)}_1$ respectively, let  $\widehat{V}^{(1)}_0=(0, \ 0,  \ 1, \ z)^T$ and $V^{(1)}_1=\frac{1}{1+z\overline{z}}(-\overline{z}, \ 1,  \ 0, \ 0)^T$,  then,  up to an
isometry of $G(2,4;\mathbb{C})$, there exists some $U\in U(4)$ s.t. $$\underline{\phi}=\underline{U}\underline{\widehat{V}}^{(1)}_0\oplus \underline{UV}^{(1)}_1: S^2\rightarrow G(2,4;\mathbb{C}),$$
 such $\phi$ is congruent to the case (3) in Theorem 1.2.

 If $m=n\geq 2$. We obtain
 $$2P = \frac{|g_{m-1}|^2}{|g_{m-2}|^2|g_{m}|^2}g_mg_{m-2}^{*}-\frac{1}{|f_1|^2}f_2f_0^{*}, $$
$$2[[A_{\overline{z}}, A_z], P]=\frac{1}{|g_{m-2}|^2}g_mg_{m-2}^{*}+\frac{|g_m|^2\langle f_2, g_{m-1}\rangle}{|f_1|^2|g_{m-1}|^4}g_{m-1}f_0^{*}-\frac{|g_{m-1}|^2\langle f_1, g_{m-2}\rangle}{|f_0|^2|g_{m-2}|^2|g_m|^2}g_{m}f_1^{*}-\frac{1}{|f_0|^2}f_2f_0^{*}.$$
Then $[[A_{\overline{z}}, A_z], P]=M_2P$ holds  if and only if the following equations
$$ (M_2-\frac{|f_1|^2}{|f_0|^2})f_2=-\frac{\langle f_2, g_{m-1}\rangle|g_m|^2}{|g_{m-1}|^4}g_{m-1}, $$
$$ (M_2-\frac{|g_m|^2}{|g_{m-1}|^2})g_{m-2}=-\frac{\langle  g_{m-2}, f_1 \rangle}{|f_0|^2}f_1$$
hold.

If $\underline{g}_{m-1}=\underline{f}_2$. In this case $\underline{g}_{m}=\underline{f}_3, \ m=n=3$ and $\underline{f}_0:S^2\rightarrow \mathbb{C}P^3$ is of constant curvature. Then, by Lemma 2.2, there exists some  $U\in U(4)$ s.t. $f_0=UV^{(3)}_0$ and
$$\underline{\phi}=\underline{f}_0\oplus \underline{f}_3=\underline{UV}^{(3)}_0\oplus \underline{UV}^{(3)}_3: S^2\rightarrow G(2,4;\mathbb{C}).$$  With a simple test we know that such $\phi$ is of parallel second fundamental form with $K=\frac{2}{3}$ and  $\|B\|^2=\frac{8}{3}$, and it is congruent to the case (4) in Theorem 1.2.

If $\underline{g}_{m-1} \neq \underline{f}_2$. In this case we have
$$\langle f_2, g_{m-1}\rangle=0, \quad M_2=\frac{|f_1|^2}{|f_0|^2}=\frac{|g_m|^2}{|g_{m-1}|^2}=\frac{\lambda^2}{2}.$$
Then combining it with \eqref{eq:4.9} we have $$K=1, \quad \|B\|^2=2.$$ 
So we have
$$1=K=2-\frac{\delta^{(n)}_{1}}{\delta^{(n)}_0}=2-\frac{\delta^{(m)}_{m-2}}{\delta^{(m)}_{m-1}}.$$
Thus
$$m=n=2,$$
and $\{\underline{g}_0, \underline{g}_1, \underline{g}_2\}$ and $\{\underline{f}_0, \underline{f}_1, \underline{f}_2\}$ are mutually orthogonal harmonic sequences in $\mathbb{C}P^2$ with constant curvature.
By a simple test we know $$\underline{\phi}=\underline{g}_2\oplus \underline{f}_0: S^2\rightarrow G(2,6;\mathbb{C})$$ is of parallel second fundamental form. Then let $\widehat{V}^{(2)}_0=(0, \ 0, \ 0, \ 1, \ \sqrt{2}z, \ z^2)^T$ and $V^{(2)}_2=\frac{2}{(1+z\overline{z})^2}(\overline{z}^2, \ -\sqrt{2}\overline{z}, \ 1, \ 0, \ 0, \ 0)^T$,  there exists some $U\in U(6)$ s.t.
 $$\underline{\phi}=\underline{U}\underline{\widehat{V}}^{(2)}_0\oplus \underline{UV}^{(2)}_2: S^2\rightarrow G(2,6;\mathbb{C}),$$
which is congruent to the case (5) in Theorem 1.2. Summing up, we get the conclusion.
\end{proof}

Lemma 4.3 gives a complete classification of $\phi$ with parallel second fundamental form  and  rank $\partial{'}\underline{\phi} = $ rank $\partial{''}\underline{\phi} = 1$ when it is a mixed pair. Finally we need to consider the case that  $\underline{\phi}=\underline{f}_i\oplus \underline{c}_0: S^2\rightarrow G(2,N;\mathbb{C})$, which is a linearly full harmonic map with  $\nabla B=0, \ 1\leq i \leq N-3$ and $c_0=(0,\ldots, 0,1)^T\in \mathbb{C}^N$, then we find the following harmonic sequence:
$$0\stackrel{\partial{''}}
{\longleftarrow}\underline{f}_{0}\stackrel{\partial{''}}
{\longleftarrow}\cdots\stackrel{\partial{''}}
{\longleftarrow}\underline{f}_{i-1}\stackrel{\partial{''}}
{\longleftarrow}\underline{\phi}=\underline{f}_{i} \oplus \underline{c}_0\stackrel{\partial{'}} {\longrightarrow}
\underline{f}_{i+1}\stackrel{\partial{'}}
{\longrightarrow}\cdots\stackrel{\partial{'}}
{\longrightarrow}\underline{f}_{n}\stackrel{\partial{'}}
{\longrightarrow}0,$$
$i\geq 1, \ n\geq i+1, \ N=n+2$. At first we use  $\phi=\frac{c_{0}c_{0}^{*}}{|c_{0}|^2}+\frac{f_if_i^{*}}{|f_i|^2}$ to compute
\begin{equation}   \lambda^2=\frac{|f_{i+1}|^2}{|f_i|^2}+\frac{|f_{i}|^2}{|f_{i-1}|^2}. \label{eq:4.10}\end{equation}
This implies further that $\underline{f}_i: S^2\rightarrow \mathbb{C}P^n$ is of constant curvature,
using the rigidity
theorem of Bolton et al (\cite{Bolton}), up to a holomorphic isometry of
$\mathbb{C}P^{n}$,  there exists some $U\in U(n+1)$ s.t. $ {f}_i = U{V}^{(n)}_i$.

\begin{lemma}
Let $\underline{\phi}=\underline{f}_i\oplus \underline{c}_0: S^2\rightarrow G(2,N;\mathbb{C})$ be a linearly full conformal minimal immersion with parallel  second fundamental form  and $1\leq i \leq N-3$,
then $\phi$ is congruent to cases (6) or (7)  in Theorem 1.2.
\end{lemma}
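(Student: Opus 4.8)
The plan is to exploit the structure already established: we have $\underline{\phi}=\underline{f}_i\oplus\underline{c}_0$ with $\underline{f}_i=UV^{(n)}_i$ of constant curvature, sitting in the harmonic sequence displayed above, and from \eqref{eq:4.10} the induced metric is $\lambda^2=\frac{|f_{i+1}|^2}{|f_i|^2}+\frac{|f_i|^2}{|f_{i-1}|^2}$, which in the Veronese normalization is a known explicit rational function of $z\overline z$. The first step is to compute, using $\phi=\frac{c_0c_0^*}{|c_0|^2}+\frac{f_if_i^*}{|f_i|^2}$, the quantities $A_{\overline z}$, $[A_{\overline z},[A_z,A_{\overline z}]]$, $P=\partial(A_z/\lambda^2)$ and $[[A_{\overline z},A_z],P]$ explicitly in terms of the frame $\{f_{i-2},f_{i-1},f_i,f_{i+1},f_{i+2},c_0\}$, exactly in the style of \eqref{eq:3.3}--\eqref{eq:3.5} and the analogous formulae in the proof of Lemma 4.3. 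Because $\underline{f}_i$ is Veronese, the ratios $|f_{k+1}|^2/|f_k|^2$ are explicit, so $\lambda^2$ and hence $M_1=-\frac{\lambda^2(2K+\|B\|^2)}{4}$ and $M_2=\lambda^2(K-\frac{\|B\|^2}{4})$ become explicit.

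Next I would impose the two equations of Lemma 3.1, namely $[A_{\overline z},[A_z,A_{\overline z}]]=M_1A_{\overline z}$ and $[[A_{\overline z},A_z],P]=M_2P$, evaluated on the frame vectors. Matching coefficients of the independent rank-one terms $f_kf_\ell^*$ will, as in Lemmas 4.2 and 4.3, force $M_1$ and $M_2$ to be specific constant multiples of $\lambda^2$; but here $\lambda^2$ is genuinely non-constant unless $L_{i-1}$ and $L_i$ are proportional, so the real content is that consistency of these coefficient equations at all points of $S^2$ pins down both $n$ and $i$. Concretely, writing $L_{i-1}=\frac{|f_i|^2}{|f_{i-1}|^2}$ and $L_i=\frac{|f_{i+1}|^2}{|f_i|^2}$ as the explicit Veronese expressions $\frac{i(n+1-i)}{(1+z\overline z)^2}$ and $\frac{(i+1)(n-i)}{(1+z\overline z)^2}$, one sees $\lambda^2$ has constant curvature automatically; the constant-curvature condition $K=\frac{4}{n+2i(n-i)}\cdot(\text{something})$ combined with the relations forced by $\nabla B=0$ should collapse to a small Diophantine system. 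I expect it to yield exactly $(n,i)=(2,1)$ giving $K=1,\|B\|^2=0$ (case (6), in $G(2,4;\mathbb C)$) and $(n,i)=(4,2)$ giving $K=\frac13,\|B\|^2=\frac43$ (case (7), in $G(2,6;\mathbb C)$).

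The third step is the converse/rigidity direction: once $n$ and $i$ are fixed, $\underline{f}_i=UV^{(n)}_i$ by Lemma 2.2 (Bolton et al.), so $\underline{\phi}=\underline{UV}^{(n)}_i\oplus\underline{c}_0$ up to $U(N)$; one then checks via Lemma 3.2, i.e. verifying \eqref{eq:3.2}, that each candidate indeed has parallel second fundamental form and computes $\|B\|^2=4\,\mathrm{tr}\,PP^*$ to confirm the stated values. The details of \eqref{eq:3.2} for case (7) are the most laborious but are a finite computation with the explicit Veronese vectors. The main obstacle I anticipate is organizational rather than conceptual: keeping the six-term frame expansions of $P$ and $[[A_{\overline z},A_z],P]$ under control and correctly identifying which coefficient matchings are forced — in particular ruling out the a priori possibility that $\underline{f}_{i-1}$ or $\underline{f}_{i+1}$ coincides with part of the data (as in the $\underline{g}_{m-1}=\underline{f}_2$ branch of Lemma 4.3) so that no spurious extra families survive. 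Once the coefficient bookkeeping is done cleanly, the Diophantine reduction and the final verification are routine.
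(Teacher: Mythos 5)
Your proposal follows essentially the same route as the paper: the first equation of Lemma 3.1 forces $-M_1=\frac{|f_{i+1}|^2}{|f_i|^2}=\frac{|f_i|^2}{|f_{i-1}|^2}$, hence $2i=n$ and $K+\frac{\|B\|^2}{2}=1$, while the second (for $i\geq 2$, where $[[A_{\overline z},A_z],P]=0$ so $M_2=0$) gives $K=\frac{\|B\|^2}{4}$, pinning down exactly $(n,i)=(2,1)$ and $(4,2)$ with the curvature values you predict, followed by the Lemma 3.2 verification. The only simplification you could note is that here the second summand is the constant vector $c_0$, orthogonal to every $f_k$, so the coincidence issues you worry about (as in the $\underline{g}_{m-1}=\underline{f}_2$ branch of Lemma 4.3) do not arise.
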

\begin{proof}
By using  relation $[A_{\overline{z}}, [A_z, A_{\overline{z}}]]=M_1A_{\overline{z}}$ we get
$-M_1=\frac{|f_{i+1}|^2}{|f_i|^2}=\frac{|f_{i}|^2}{|f_{i-1}|^2}$,
which establishes $l_{i-1}=l_i, \ \delta^{(n)}_{i-1}=\delta^{(n)}_i$
and therefore
 \begin{equation}2i=n, \quad K+\frac{\|B\|^2}{2}=1.\label{eq:4.11}\end{equation}

If $i=1$. Then we have $n=2$ and $$\underline{\phi}=\underline{UV}^{(2)}_1\oplus \underline{c}_0: S^2\rightarrow G(2,4;\mathbb{C})$$ for some $U\in U(4)$(adding zero to the end of $V^{(2)}_1$ s.t. it belongs  to $\mathbb{C}^4$).  With a simple test we know that such $\phi$ is totally geodesic  with $K=1$, and it is congruent to the case (6) in Theorem 1.2.

If $i\geq 2$. Here  we obtain
$$2P = \frac{|f_{i-1}|^2}{|f_{i}|^2|f_{i-2}|^2}f_{i}f_{i-2}^{*}-\frac{1}{|f_{i+1}|^2}f_{i+2}f_{i}^{*}, \quad   [[A_{\overline{z}}, A_z], P]=0,$$
which implies 
 \begin{equation}K=\frac{1}{3}, \quad \|B\|^2=\frac{4}{3}, \quad i=2  \label{eq:4.12}\end{equation} from $[[A_{\overline{z}}, A_z], P]=M_2P$ and  \eqref{eq:4.11}.
 With a simple test we know, for any $U\in U(6)$,  $$\underline{\phi}=\underline{UV}_{2}^{(4)} \oplus \underline{c}_0: S^2\rightarrow G(2,6;\mathbb{C})$$ is of parallel second fundamental form(adding zero to the end of $V^{(4)}_2$ s.t. it belongs  to $\mathbb{C}^6$), and it is congruent to the case (7) in Theorem 1.2. This finishes the proof.
\end{proof}

Summing Lemmas 4.2-4.4, we get Theorem 1.2 in Section 1.

\section{Minimal two-spheres with parallel second fundamental form and rank $\partial{'}\underline{\phi} = 1$,  rank $\partial{''}\underline{\phi} = 2$}

 In this section we analyze  conformal minimal immersions $\phi$ from $S^2$ to $G(2,N;\mathbb{C})$ with parallel second fundamental form, rank $\partial{'}\underline{\phi} = 1$ and rank $\partial{''}\underline{\phi} = 2$. From $\phi$, a harmonic sequence is derived as follows:
\begin{equation}0\stackrel{\partial{''}}
{\longleftarrow}\cdots\stackrel{\partial{''}}
{\longleftarrow}\underline{\phi}_{-1}\stackrel{\partial{''}}
{\longleftarrow}\underline{\phi}\stackrel{\partial{'}} {\longrightarrow}
\underline{\phi}_{1}\stackrel{\partial{'}}
{\longrightarrow}\cdots\stackrel{\partial{'}}
{\longrightarrow}0.\label{eq:5.1}\end{equation}
Since $\underline{\phi}_1$ is of rank one and it is harmonic, we can write $\underline{\phi}_1=\underline{f}_{i+1}$, where $f_{i+1}$ is a local section of $\underline{\phi}_1$ and it belongs to the following harmonic sequence in $\mathbb{C}P^n$ $$0\stackrel{\partial{'}}
{\longrightarrow}\underline{f}_0\stackrel{\partial{'}}
{\longrightarrow}\underline{f}_{1}\stackrel{\partial{'}}
{\longrightarrow}\cdots\stackrel{\partial{'}} {\longrightarrow}
\underline{f}_{i}\stackrel{\partial{'}}
{\longrightarrow}\cdots\stackrel{\partial{'}}
{\longrightarrow}\underline{f}_n\stackrel{\partial{'}}
{\longrightarrow}0$$ for some $1\leq i \leq n-1$, here $f_0, \ldots, f_n$ satisfy equations \eqref{eq:2.8} and \eqref{eq:2.9}.
From \eqref{eq:5.1}, since $f_i$ is a local section of $\underline{\phi}$  and rank $\underline{\phi}=2$, there exists another local section $\alpha$ of $\underline{\phi}$ such that $\underline{\phi}=\underline{\alpha}\oplus \underline{f}_i$. Set $$\alpha_1=\partial \alpha-\frac{\langle \partial \alpha, \alpha\rangle}{|\alpha|^2}\alpha, \ \alpha_{-1}=\overline{\partial} \alpha-\frac{\langle \overline{\partial} \alpha, \alpha\rangle}{|\alpha|^2}\alpha,   \quad  \beta=A^{''}_{\phi}(f_i), \ \underline{\gamma}=\underline{\beta}^{\bot}\cap \underline{\phi}_{-1},$$
then $\underline{\phi}_{-1}$ is spanned by local sections $\beta$ and $\gamma$.

 To characterize $\phi$ and give its classification, at first we state one of  Burstall and Wood'
results (\cite{Burstall}) as follows:
\begin{lemma}[{\cite{Burstall}}]
Let $\underline{\phi}:S^2\rightarrow G(2,N;\mathbb{C})$ be a  harmonic map with $\partial{'}\underline{\phi}$ of rank one and  $A''_{\phi}(\underline{ker}A^{'\perp}_{\phi})\neq 0$.  Let $\underline{\alpha}$ be the anti-holomorphic subbundle of $\underline{\phi}$ defined by $\underline{ker}A'_{\phi}$, then backward replacement of $\underline{\beta}=\underline{\alpha}^{\perp}\bigcap \underline{\phi}$ produces a new harmonic map $\underline{\widetilde{\phi}}=\underline{\alpha}\oplus \underline{Im}(A''_{\phi}|\underline{\beta}): S^2 \rightarrow G(2,N;\mathbb{C})$, where $\partial'\widetilde{\underline{\phi}}=\underline{\beta},  \ \partial^{(i)}\widetilde{\underline{\phi}}= \partial^{(i-1)}\underline{\phi}$ for $i\geq 2$.
\end{lemma}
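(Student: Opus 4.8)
The plan is to prove this by the bundle-morphism form of Burstall--Wood's replacement construction, using throughout the standard fact that a map $\psi:S^{2}\to G(k,N;\mathbb{C})$ is harmonic if and only if $A'_{\psi}$ is a holomorphic section of $\mathrm{Hom}(\underline{\psi},\underline{\psi}^{\perp})$ for the Koszul--Malgrange holomorphic structures, equivalently $A''_{\psi}$ is anti-holomorphic. The statement is the backward-replacement counterpart of the forward-replacement result Theorem 2.4 of \cite{Burstall} already used in Section 3, and in principle it can be deduced from that result by reversing the orientation of $S^{2}$ (interchanging $z$ and $\overline{z}$), which exchanges $\partial'\leftrightarrow\partial''$ and holomorphic $\leftrightarrow$ anti-holomorphic while preserving harmonicity; I will describe the direct argument.

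First I would make the relevant subbundles precise. Since $\phi$ is harmonic and $\partial'\underline{\phi}$ has rank one, $A'_{\phi}:\underline{\phi}\to\underline{\phi}^{\perp}$ is a holomorphic morphism of generic rank one; after filling in the isolated points where its rank drops, its kernel is a holomorphic line subbundle of $\underline{\phi}$ --- this is the subbundle $\underline{\alpha}$ of the statement --- and in particular a smooth rank-one subbundle of $\underline{\mathbb{C}}^{N}$. Put $\underline{\beta}=\underline{\alpha}^{\perp}\cap\underline{\phi}$, a line bundle. The hypothesis $A''_{\phi}(\underline{\ker}A'^{\perp}_{\phi})\neq 0$ means $A''_{\phi}|_{\underline{\beta}}\not\equiv 0$; since $A''_{\phi}$ is anti-holomorphic, after filling zeros $\underline{\eta}:=\underline{\mathrm{Im}}(A''_{\phi}|_{\underline{\beta}})$ is an anti-holomorphic line subbundle of $\underline{\mathbb{C}}^{N}$ lying in $\underline{\phi}^{\perp}$, hence orthogonal to $\underline{\phi}$. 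Thus $\underline{\widetilde{\phi}}:=\underline{\alpha}\oplus\underline{\eta}$ is a well-defined rank-two subbundle, and $\underline{\mathbb{C}}^{N}=\underline{\alpha}\oplus\underline{\beta}\oplus\underline{\phi}^{\perp}$ with $\underline{\widetilde{\phi}}^{\perp}=\underline{\beta}\oplus(\underline{\phi}^{\perp}\ominus\underline{\eta})$.

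The core step is harmonicity of $\underline{\widetilde{\phi}}$, for which I would show $A'_{\widetilde{\phi}}$ is holomorphic by computing it on the two summands. For a section $v$ of $\underline{\alpha}\subset\underline{\ker}A'_{\phi}$ we have $\partial v\in\underline{\phi}=\underline{\alpha}\oplus\underline{\beta}$, and since $\underline{\beta}\subset\underline{\widetilde{\phi}}^{\perp}$ this gives $A'_{\widetilde{\phi}}v=\pi_{\underline{\beta}}(\partial v)$, the second fundamental form of $\underline{\alpha}$ inside $\underline{\phi}$. For a section $w=A''_{\phi}u$ with $u$ a section of $\underline{\beta}$, I would differentiate $w=\pi_{\phi^{\perp}}(\overline{\partial}u)$ and use the harmonic equation \eqref{eq:2.1} for $\phi$ together with the orthogonality relations in \eqref{eq:2.4} to decompose $\partial w$; the claim to verify is that $\pi_{\widetilde{\phi}^{\perp}}(\partial w)$ again lies in $\underline{\beta}$ and that the $(0,1)$-derivative of the resulting morphism vanishes. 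Granting this, $A'_{\widetilde{\phi}}$ is holomorphic, so $\underline{\widetilde{\phi}}$ is harmonic, and $\partial'\underline{\widetilde{\phi}}=\underline{\mathrm{Im}}A'_{\widetilde{\phi}}=\underline{\beta}$. Finally, computing $\partial'\underline{\beta}$ inside the harmonic sequence of $\underline{\widetilde{\phi}}$ and using that $\underline{\alpha}$ is holomorphic in $\underline{\phi}$ (which forces the $\underline{\alpha}$-component of $\partial(\text{sections of }\underline{\beta})$ to vanish), one gets $\pi_{\beta^{\perp}}(\partial u)=A'_{\phi}u$, whence $\partial^{(2)}\underline{\widetilde{\phi}}=\underline{\mathrm{Im}}A'_{\phi}=\partial'\underline{\phi}$; then $\partial^{(i)}\underline{\widetilde{\phi}}=\partial^{(i-1)}\underline{\phi}$ for all $i\geq 2$ follows by induction along \eqref{eq:5.1}.

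The main obstacle is the harmonicity check in the previous paragraph: one must track carefully which orthogonal components $\partial w$ and $\overline{\partial}w$ fall into, using the full set of integrability relations for \eqref{eq:2.4}, to confirm both that no component escapes $\underline{\widetilde{\phi}}\oplus\underline{\beta}$ and that $A'_{\widetilde{\phi}}$ is $(0,1)$-closed. A secondary but genuine point over $S^{2}$ is the zero-filling: $\underline{\alpha}$ and $\underline{\eta}$ are a priori defined only off finite sets, so one must invoke the standard extension of a holomorphic (resp. anti-holomorphic) line subsheaf of a holomorphic bundle over a Riemann surface to an honest subbundle, to conclude that $\underline{\widetilde{\phi}}$ is smooth everywhere.
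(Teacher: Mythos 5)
The paper offers no proof of this statement: it is quoted verbatim as a known result of Burstall and Wood (\cite{Burstall}), so there is no in-paper argument to compare yours against. Judged on its own terms, your write-up correctly sets up the Burstall--Wood framework: the identification of $\underline{\alpha}=\underline{\ker}A'_{\phi}$ as a holomorphic line subbundle (after the standard zero-filling over a Riemann surface), the definition of $\underline{\eta}=\underline{\mathrm{Im}}(A''_{\phi}|\underline{\beta})$ as an anti-holomorphic line subbundle of $\underline{\phi}^{\perp}$, the computation $A'_{\widetilde{\phi}}v=\pi_{\underline{\beta}}(\partial v)$ on sections of $\underline{\alpha}$, and the derivation of $\partial^{(2)}\underline{\widetilde{\phi}}=\underline{\mathrm{Im}}\,A'_{\phi}=\partial'\underline{\phi}$ from the holomorphicity of $\underline{\alpha}$ in $\underline{\phi}$ (which kills the $\underline{\alpha}$-component of $\partial u$ for $u$ a section of $\underline{\beta}$) are all sound, and the induction for $i\geq 2$ then goes through.

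However, there is a genuine gap at the one step that carries the entire content of the lemma: the harmonicity of $\underline{\widetilde{\phi}}$. On the summand $\underline{\eta}$ you reduce everything to ``the claim to verify is that $\pi_{\widetilde{\phi}^{\perp}}(\partial w)$ again lies in $\underline{\beta}$ and that the $(0,1)$-derivative of the resulting morphism vanishes,'' and then proceed under ``granting this.'' That verification --- decomposing $\partial w$ for $w=A''_{\phi}u$ using the harmonic equation \eqref{eq:2.1} and the integrability relations for \eqref{eq:2.4}, and checking $\overline{\partial}$-closedness of $A'_{\widetilde{\phi}}$ --- is precisely where Burstall and Wood do the work, and without it nothing has been proved: the conclusion ``$\underline{\widetilde{\phi}}$ is harmonic'' is assumed, not established. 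The cleanest way to close the gap is the one you mention in passing and then abandon: apply the forward-replacement result (Theorem 2.4 of \cite{Burstall}, already invoked in Section 3 of this paper) to $\phi$ with the orientation of $S^{2}$ reversed, which interchanges $\partial'$ with $\partial''$ and holomorphic with anti-holomorphic subbundles while preserving harmonicity; spelled out, that reduction is a complete proof, whereas the ``direct argument'' as written is only a plan.
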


Using this lemma, by backward replacement of $\underline{f}_i$, we obtain a new harmonic map $\underline{\phi}^{(1)}=\underline{\alpha}\oplus\underline{\beta}: S^2\rightarrow G(2,N;\mathbb{C})$, which belongs to the following harmonic sequence
\begin{equation}0\stackrel{\partial{''}}
{\longleftarrow}\cdots\stackrel{\partial{''}}
{\longleftarrow}\underline{\phi}^{(1)}=\underline{\alpha}\oplus \underline{\beta}\stackrel{\partial{'}} {\longrightarrow}
\underline{f}_{i}\stackrel{\partial{'}} {\longrightarrow}
\cdots\stackrel{\partial{'}} {\longrightarrow}
\underline{f}_{n}\stackrel{\partial{'}}
{\longrightarrow}0.\label{5.2}\end{equation}
 Then $f_{i-1}$ is a local section of $\underline{\alpha}\oplus \underline{\beta}$ and
$$\alpha_1=\frac{\langle \alpha, f_{i-1}\rangle}{|f_{i-1}|^2}f_i, \ \alpha_{-1}=\frac{\langle \overline{\partial} \alpha, \beta\rangle}{|\beta|^2}\beta+\frac{\langle \overline{\partial} \alpha, \gamma\rangle}{|\gamma|^2}\gamma.$$
By making use of  $\phi=\frac{\alpha\alpha^{*}}{|\alpha|^2}+\frac{f_if_i^{*}}{|f_i|^2}$,  it was proved that
\begin{equation}A_{\overline{z}} = \frac{f_if_{i+1}^{*}}{|f_i|^2}+ \frac{\langle f_{i-1}, \beta\rangle}{|f_{i-1}|^2|\beta|^2}\beta f_i^{*} -
\frac{\alpha_{-1}\alpha^{*}}{|\alpha|^2},    \quad    \lambda^2=\frac{|f_{i+1}|^2}{|f_i|^2}+\frac{|\langle \beta, f_{i-1}\rangle|^2|f_i|^2}{|f_{i-1}|^4|\beta|^2}+\frac{|\alpha_{-1}|^2}{|\alpha|^2},\label{eq:5.2}\end{equation}

$$
\begin{array}{lll} [A_{\overline{z}}, [A_z, A_{\overline{z}}]] & = &
[\frac{|\langle f_{i-1}, \beta\rangle|^2}{|f_{i-1}|^4|\beta|^2}-2\frac{|f_{i+1}|^2}{|f_i|^4}]f_if_{i+1}^{*}
+[\frac{\langle f_{i-1}, \beta\rangle |f_{i+1}|^2}{|f_{i-1}|^2|f_{i}|^2|\beta|^2}-2\frac{|\langle f_{i-1}, \beta\rangle|^2\langle f_{i-1}, \beta\rangle|f_i|^2}{|f_{i-1}|^6|\beta|^4}]\beta f_i^{*}
\\
& & +2\frac{|\alpha_{-1}|^2}{|\alpha|^4}\alpha_{-1}\alpha^{*}+2\frac{|\langle f_{i-1}, \beta\rangle|^2\langle \alpha_{-1}, \beta\rangle|f_i|^2}{|f_{i-1}|^4|\alpha|^2|\beta|^4}\beta\alpha^{*}
-2\frac{\langle f_{i-1}, \beta\rangle\langle \beta, \alpha_{-1}\rangle}{|f_{i-1}|^2|\alpha|^2|\beta|^2}\alpha_{-1}f_i^{*}
\\
& & +\frac{|\langle \beta, f_{i-1}\rangle|^2\langle \beta, f_{i+1}\rangle}{|f_{i-1}|^4|\beta|^4}f_i\beta^{*}
+\frac{\langle \alpha_{-1}, f_{i+1}\rangle}{|\alpha|^2|f_i|^2}f_i\alpha_{-1}^{*}
+\frac{\langle \beta, f_{i+1}\rangle\langle  f_{i-1}, \beta\rangle}{|f_{i-1}|^2|f_i|^2|\beta|^2}f_{i+1}f_i^{*}
\\
& &
-\frac{\langle \alpha_{-1}, f_{i+1}\rangle}{|\alpha|^2|f_i|^2}f_{i+1}\alpha^{*}
-\frac{\langle \beta, \alpha_{-1}\rangle\langle f_{i-1}, \beta\rangle}{|f_{i-1}|^2|\alpha|^2|\beta|^2}\alpha f_{i+1}^{*}.\end{array} $$
Then we have
\begin{equation}\langle \alpha_{-1}, \beta\rangle =0, \ \langle \beta, f_{i+1}\rangle =0,\label{5.3}\end{equation}
\begin{equation}  M_1+2\frac{|\langle f_{i-1}, \beta\rangle|^2|f_i|^2}{|f_{i-1}|^4|\beta|^2}=\frac{|f_{i+1}|^2}{|f_i|^2},\label{eq:5.3}\end{equation}
\begin{equation}(M_1+2\frac{|\alpha_{-1}|^2}{|\alpha|^2})\alpha_{-1}=\frac{\langle \alpha_{-1}, f_{i+1}\rangle}{|f_i|^2}f_{i+1},\label{eq:5.4}\end{equation}
\begin{equation}[M_1-\frac{|\langle f_{i-1}, \beta\rangle|^2|f_i|^2}{|f_{i-1}|^4|\beta|^2}+2\frac{|f_{i+1}|^2}{|f_i|^2}]f_{i+1}=\frac{\langle f_{i+1}, \alpha_{-1}\rangle}{|\alpha|^2}\alpha_{-1} \label{eq:5.5}\end{equation}
from the fact $[A_{\overline{z}}, [A_z, A_{\overline{z}}]]=M_1A_{\overline{z}}$ because $\nabla B=0$.
It will be convenient in the following for us to put
\begin{equation}\frac{|\langle f_{i-1}, \beta\rangle|^2|f_i|^2}{|f_{i-1}|^4|\beta|^2}=a\frac{|f_{i+1}|^2}{|f_i|^2}, \
\frac{|\alpha_{-1}|^2}{|\alpha|^2}=b\frac{|f_{i+1}|^2}{|f_i|^2},  $$  and  $$\lambda_1=-\frac{1}{\lambda^2|f_{i}|^2}, \ \lambda_2=\frac{1}{\lambda^2|\alpha|^2}, \ \lambda_3=-\frac{\langle \beta, f_{i-1}\rangle}{\lambda^2|f_{i-1}|^2|\beta|^2}. \label{eq:5.6}\end{equation}
By analysis  \eqref{eq:5.3}-\eqref{eq:5.5} we can derive that $a, \ b$ are both constants. 
Then
 applying  equations $P=\partial(\frac{A_z}{\lambda^2})$ and $\partial \lambda_1+\lambda_1\frac{\langle \partial f_{i+1}, f_{i+1}\rangle}{|f_{i+1}|^2}=0$ we obtain
\begin{equation}
\begin{array}{lll} P & = & \lambda_1f_{i+2}f_i^{*}-\lambda_1\frac{|f_i|^2}{|f_{i-1}|^2}f_{i+1}f_{i-1}^{*}+ \partial \lambda_2 \alpha\alpha^{*}_{-1}+ \lambda_2\frac{\langle \partial \alpha, \alpha\rangle}{|\alpha|^2}\alpha\alpha_{-1}^{*}+\lambda_2\frac{\langle \alpha, f_{i-1}\rangle}{|f_{i-1}|^2}f_i\alpha_{-1}^{*}\\
& &+\lambda_2\alpha(\overline{\partial}\alpha_{-1})^{*}+\partial \lambda_3 f_i \beta^{*}+\lambda_3 f_{i+1}\beta^{*}  + \lambda_3\frac{\langle \partial f_i, f_i\rangle}{|f_i|^2}f_i \beta^{*} + \lambda_3 f_i (\overline{\partial}\beta)^{*}.\end{array} \label{eq:5.7}\end{equation}
 With it relation $[[A_{\overline{z}}, A_z], P]=M_2P$ is equivalent to the following  four equations
\begin{equation}
\begin{array}{lll} \overline{\lambda}_2(M_2-\frac{|\alpha_{-1}|^2}{|\alpha|^2})\overline{\partial} \alpha_{-1} & = &
[(2\frac{|\alpha_{-1}|^2}{|\alpha|^2}-M_2)(\overline{\partial\lambda}_{2}+\overline{\lambda}_{2}\frac{\langle \alpha, \partial \alpha\rangle}{|\alpha|^2})+\overline{\lambda}_2\frac{\langle \overline{\partial} \alpha_{-1}, \alpha_{-1}\rangle}{|\alpha|^2}]\alpha_{-1}\\
& &+[(1-a)\overline{\lambda}_1\frac{|f_{i+1}|^2\langle \alpha, f_{i+2}\rangle}{|f_i|^2|\alpha|^2}+(1-a)\overline{\lambda}_2\frac{|f_{i+1}|^2\langle \overline{\partial}\alpha_{-1}, f_{i}\rangle}{|f_i|^4}\\
& &+\overline{\lambda}_1\frac{|\alpha_{-1}|^2\langle \alpha, f_{i+2}\rangle}{|\alpha|^4}-M_2\overline{\lambda}_1\frac{\langle \alpha, f_{i+2}\rangle}{|\alpha|^2}]f_{i}\\
& &-[\overline{\partial\lambda}_2\frac{\langle \alpha_{-1}, f_{i+1}\rangle}{|f_i|^2}+\overline{\lambda}_2\frac{\langle \alpha_{-1}, f_{i+1}\rangle\langle \alpha, \partial \alpha\rangle}{|\alpha|^2|f_i|^2}+\overline{\lambda}_2\frac{\langle \overline{\partial} \alpha_{-1}, f_{i+1}\rangle}{|f_i|^2}]f_{i+1},\end{array} \label{eq:5.8}\end{equation}

\begin{equation}
\begin{array}{lll} \overline{\lambda}_3[M_2-(a-1)\frac{|f_{i+1}|^2}{|f_i|^2}]\overline{\partial} \beta & = &[(2a-1)\frac{|f_{i+1}|^2}{|f_i|^2}(\overline{\partial\lambda}_{3}+\overline{\lambda}_3\frac{\langle f_i, \partial f_i\rangle}{|f_i|^2})+a\overline{\lambda}_3\frac{|f_{i+1}|^2\langle \overline{\partial} \beta, \beta\rangle}{|f_i|^2|\beta|^2}-M_2\overline{\partial\lambda}_{3}\\
& &-M_2\overline{\lambda}_3\frac{\langle f_i, \partial f_i\rangle}{|f_i|^2}]\beta +[(a+b-1)\overline{\lambda}_2\frac{|f_{i+1}|^2\langle f_{i-1}, \alpha\rangle}{|f_i|^2|f_{i-1}|^2}+\overline{\lambda}_3\frac{\langle \overline{\partial} \beta, \alpha_{-1}\rangle}{|\alpha|^2}\\
& &-\overline{\lambda}_2M_2\frac{\langle f_{i-1}, \alpha\rangle}{|f_{i-1}|^2}]\alpha_{-1}-[\overline{\lambda}_2\frac{\langle \alpha_{-1}, f_{i+1}\rangle\langle f_{i-1}, \alpha\rangle}{|f_i|^2|f_{i-1}|^2}
+\overline{\lambda}_3\frac{\langle \overline{\partial} \beta, f_{i+1}\rangle}{|f_i|^2}]f_{i+1},\end{array} \label{eq:5.9}\end{equation}

\begin{equation}
\begin{array}{lll} M_2(\lambda_3f_{i+1}\beta^{*}-\lambda_1\frac{|f_{i}|^2}{|f_{i-1}|^2}f_{i+1}f_{i-1}^{*}) & = &(a+1)\lambda_3\frac{|f_{i+1}|^2}{|f_{i}|^2}f_{i+1}\beta^{*}-\lambda_1\frac{|f_{i+1}|^2}{|f_{i-1}|^2}f_{i+1}f_{i-1}^{*}\\
& &+\lambda_1\frac{|f_{i}|^2\langle f_{i+1}, \alpha_{-1}\rangle}{|\alpha|^2|f_{i-1}|^2}\alpha_{-1}f_{i-1}^{*}
-\lambda_3\frac{\langle f_{i+1}, \alpha_{-1}\rangle}{|\alpha|^2}\alpha_{-1}\beta^{*}\\
& &+\lambda_1\frac{|f_{i}|^2|\alpha_{-1}|^2\langle \alpha, f_{i-1}\rangle}{|f_{i-1}|^2|\alpha|^4}f_{i+1}\alpha^{*}-a
\lambda_1\frac{|f_{i+1}|^2\langle \beta, f_{i-1}\rangle}{|f_{i-1}|^2|\beta|^2}f_{i+1}\beta^{*},\end{array} \label{eq:5.10}\end{equation}

\begin{equation}[M_2-(1-a)\frac{|f_{i+1}|^2}{|f_i|^2}](f_{i+2}-\frac{\langle f_{i+2}, \alpha\rangle}{|\alpha|^2}\alpha)=-\frac{\langle f_{i+2}, \alpha_{-1}\rangle}{|\alpha|^2}\alpha_{-1}-a
\frac{|f_{i+1}|^2\langle f_{i+2}, \beta\rangle}{|\beta|^2|f_i|^2}\beta.\label{eq:5.11}\end{equation}
 From  \eqref{eq:5.4} and \eqref{eq:5.5}, in order to get the explicit expression of $\phi$, we distinguish two cases:
$\underline{\alpha}_{-1}=\underline{f}_{i+1}$, whereas
$\underline{\alpha}_{-1}\neq \underline{f}_{i+1}$.

\subsection{The case $\underline{\alpha}_{-1}=\underline{f}_{i+1}$.}

We observe that in this case $a=b=\frac{3}{4}$, and by \eqref{eq:5.3}-\eqref{eq:5.5} we immediately have  the following  formulae
\begin{equation}\lambda^2=\frac{5|f_{i+1}|^2}{2|f_i|^2}, \ M_1=-\frac{|f_{i+1}|^2}{2|f_i|^2},\label{eq:5.12}\end{equation}
which shows $$K+\frac{\|B\|^2}{2}=\frac{2}{5}.$$
Relation \eqref{eq:5.10} then gives
\begin{equation}(M_2+\frac{|f_{i+1}|^2}{2|f_i|^2})(\overline{\lambda}_1\frac{|f_{i+1}|^2}{|f_{i-1}|^2}f_{i-1}-\overline{\lambda}_3\frac{|f_{i+1}|^2}{|f_{i}|^2}\beta)=0.\label{eq:5.13}\end{equation}
Of importance is the induced metric of $\phi$ shown in \eqref{eq:5.12}, which can be interpreted that
 $\underline{f}_i: S^2\rightarrow \mathbb{C}P^n$ is harmonic with constant curvature.
Then using the rigidity
theorem of Bolton et al (\cite{Bolton}), up to a holomorphic isometry of
$\mathbb{C}P^{n}$,  there exists some $U\in U(n+1)$ s.t. $ {f}_i = U{V}^{(n)}_i$. We therefore establish the following lemma:

\begin{lemma}
Let $\phi: S^2\rightarrow G(2,N;\mathbb{C})$ be a linearly full conformal minimal immersion in \eqref{eq:5.1} with  $\nabla B=0$ and $\underline{\alpha}_{-1}=\underline{f}_{i+1}$, then
up to $U(4)$ equivalence, $\phi$ belongs to case (1) in Theorem 1.3.
\end{lemma}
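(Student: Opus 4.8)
The plan is to determine the three integers $i$, $n$ (so that $\underline{f}_i=\underline{UV}^{(n)}_i$) and $N$, and then read off $K$ and $\|B\|^2$. We already have, from \eqref{eq:5.12} and the discussion preceding the lemma: $\lambda^2=\tfrac52\,|f_{i+1}|^2/|f_i|^2$, $M_1=-|f_{i+1}|^2/(2|f_i|^2)$, $K+\tfrac12\|B\|^2=\tfrac25$, $a=b=\tfrac34$, $f_i=UV^{(n)}_i$ for some $U\in U(n+1)$, and equation \eqref{eq:5.13}. Using $f_i=UV^{(n)}_i$ and \eqref{eq:2.10}, a direct computation gives $|f_{i+1}|^2/|f_i|^2=(i+1)(n-i)/(1+z\overline{z})^2$, hence $\lambda^2=\tfrac{5(i+1)(n-i)}{2(1+z\overline{z})^2}$ and the (constant) curvature is $K=\tfrac{8}{5(i+1)(n-i)}$; in particular $K>0$, as it must be by the Gauss--Bonnet theorem.

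First I would analyze \eqref{eq:5.13}. Its first factor is $M_2+|f_{i+1}|^2/(2|f_i|^2)=M_2-M_1=\tfrac32\lambda^2K>0$ (this follows at once from the formulas \eqref{eq:3.6} for $M_1,M_2$ together with $K>0$), so it never vanishes. Therefore the second factor of \eqref{eq:5.13} must vanish; since $\lambda_1\neq0$, $\lambda_3\neq0$ (the latter because $a=\tfrac34\neq0$ forces $\langle f_{i-1},\beta\rangle\neq0$), and $f_{i+1}\neq0$, this yields $\underline{\beta}=\underline{f}_{i-1}$. Plugging this into $\beta=A''_\phi(f_i)=-\tfrac{|f_i|^2}{|f_{i-1}|^2}\pi_{\phi^{\bot}}(f_{i-1})$ (using \eqref{eq:2.9}) and the Veronese orthogonality $f_{i-1}\perp f_i$, the relation $\underline{\beta}=\underline{f}_{i-1}$ forces $\langle f_{i-1},\alpha\rangle=0$ (the alternative $\underline{\alpha}=\underline{f}_{i-1}$ would make $\underline{\phi}$ a Frenet pair, whence $\mathrm{rank}\,\partial''\underline{\phi}<2$, contradicting the hypothesis). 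Thus $f_{i-1}\perp\underline{\phi}$ and $\beta=-\tfrac{|f_i|^2}{|f_{i-1}|^2}f_{i-1}$, so substituting into the definition of $a$ in \eqref{eq:5.6} and using \eqref{eq:2.10} gives $a=\tfrac{i(n-i+1)}{(i+1)(n-i)}$; hence $a=\tfrac34$ becomes the Diophantine relation $4i(n-i+1)=3(i+1)(n-i)$, which I call $(\star)$.

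Next I would exploit \eqref{eq:5.11}. With $\underline{\alpha}_{-1}=\underline{f}_{i+1}$, $\underline{\beta}=\underline{f}_{i-1}$, and the mutual orthogonality of the Veronese sections $f_0,\dots,f_n$, we get $\langle f_{i+2},\alpha_{-1}\rangle=\langle f_{i+2},\beta\rangle=0$, so the right side of \eqref{eq:5.11} vanishes and $\big(M_2-\tfrac14|f_{i+1}|^2/|f_i|^2\big)\,\pi_{\underline{\alpha}^{\bot}}(f_{i+2})=0$. This leaves three alternatives: (A) $\pi_{\underline{\alpha}^{\bot}}(f_{i+2})=0$ with $f_{i+2}\neq0$, i.e. $\underline{\alpha}=\underline{f}_{i+2}$, which forces $n=i+2$ (otherwise $\mathrm{rank}\,\partial'\underline{\phi}=2$); (B) $M_2=\tfrac14|f_{i+1}|^2/|f_i|^2$; or (C) $f_{i+2}=0$, i.e. $n=i+1$. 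In case (B), $M_2=\lambda^2(K-\tfrac14\|B\|^2)$ gives $K=\tfrac15$, $\|B\|^2=\tfrac25$, so $K=\tfrac{8}{5(i+1)(n-i)}$ forces $(i+1)(n-i)=8$, which together with $(\star)$ has no positive-integer solution; in case (C), $(\star)$ becomes $5i=3$, also impossible. Hence case (A) holds, and $(\star)$ with $n=i+2$ reads $12i=6(i+1)$, so $i=1$, $n=3$. Therefore $\underline{\phi}=\underline{f}_1\oplus\underline{f}_3=\underline{UV}^{(3)}_1\oplus\underline{UV}^{(3)}_3$ for some $U\in U(4)$, $N=4$, with $K=\tfrac25$ and $\|B\|^2=0$ by the formulas above.

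The main obstacle I anticipate is the case analysis built on \eqref{eq:5.11}: one must verify that the three alternatives genuinely exhaust all possibilities and, should an additional degenerate sub-case emerge, that equations \eqref{eq:5.8}--\eqref{eq:5.9} (whose entries $\overline{\partial}\alpha_{-1}$, $\overline{\partial}\beta$ are now explicit Veronese quantities) can be used to eliminate it. Everything else is routine: deriving $(\star)$, the elementary arithmetic killing cases (B) and (C), and finally verifying through Lemma 3.2 (equation \eqref{eq:3.2}) that $\underline{UV}^{(3)}_1\oplus\underline{UV}^{(3)}_3\colon S^2\to G(2,4;\mathbb{C})$ really does have parallel second fundamental form, so that it is indeed the immersion in case (1) of Theorem 1.3. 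This completes the proof of the lemma.
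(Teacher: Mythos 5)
Your proposal is correct and follows essentially the same route as the paper: both use \eqref{eq:5.13} together with the observation that $M_2-M_1=\tfrac{3}{2}\lambda^2K>0$ to force $\underline{\beta}=\underline{f}_{i-1}$, and then pin down $i=1$, $n=3$, $\underline{\phi}=\underline{f}_1\oplus\underline{f}_3$. The only divergence is that you justify $n=i+2$ explicitly via the trichotomy from \eqref{eq:5.11} and a Diophantine analysis of $a=\tfrac34$, whereas the paper asserts $\underline{\phi}_{-1}=\underline{f}_{i-1}\oplus\underline{f}_{i+1}$ and $n=i+2$ with little detail, so your version is, if anything, more complete.
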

\begin{proof}
Here    we first claim that $$M_2+\frac{|f_{i+1}|^2}{2|f_i|^2}\neq 0.$$ Otherwise if $M_2+\frac{|f_{i+1}|^2}{2|f_i|^2}= 0$, it means that 
    $K=0, \quad \|B\|^2=\frac{4}{5}$,  which is impossible. Then by \eqref{eq:5.13}, we use  the equation $\overline{\lambda}_1\frac{|f_{i+1}|^2}{|f_{i-1}|^2}f_{i-1}=\overline{\lambda}_3\frac{|f_{i+1}|^2}{|f_{i}|^2}\beta$ to prove $$\underline{\beta}=\underline{f}_{i-1}, \quad \underline{\phi}_{-1}=\underline{f}_{i-1}\oplus \underline{f}_{i+1}, \quad n=i+2,$$
which implies that
    $$i=1, \ n=3, \ \underline{\phi}=\underline{f}_{1}\oplus \underline{f}_{3}: S^2\rightarrow G(2,4;\mathbb{C}).$$
Finally it is easy to check that, for any $U\in U(4)$,  $$\underline{\phi}=\underline{UV}_{1}^{(3)} \oplus \underline{UV}_{3}^{(3)}: S^2\rightarrow G(2,4;\mathbb{C})$$ is totally geodesic with $K=\frac{2}{5}$. So we get the conclusion.
\end{proof}

\subsection{The case $\underline{\alpha}_{-1}\neq \underline{f}_{i+1}$.}

It is importance to rewritten the formulae in \eqref{eq:5.3}-\eqref{eq:5.5}  as
\begin{equation}\langle \alpha_{-1}, f_{i+1}\rangle =0,  \quad  \frac{\langle \beta, f_{i-1}\rangle\langle f_{i-1}, \beta\rangle|f_i|^2}{|f_{i-1}|^4|\beta|^2}=\frac{|f_{i+1}|^2}{|f_i|^2}=\frac{2|\alpha_{-1}|^2}{|\alpha|^2}, \ a=1, \ b=\frac 1 2,
\label{eq:5.14}\end{equation}
\begin{equation}\lambda^2=\frac{5|f_{i+1}|^2}{2|f_i|^2}, \ M_1=-\frac{|f_{i+1}|^2}{|f_i|^2},\label{eq:5.15}\end{equation}
which shows \begin{equation}K+\frac{\|B\|^2}{2}=\frac{4}{5}.\label{eq:5.16}\end{equation}From the metric given in \eqref{eq:5.15} and applying $\nabla B=0$,   up to a holomorphic isometry of
$\mathbb{C}P^{n}$,  there exists some $U\in U(n+1)$ s.t. $ {f}_i = U{V}^{(n)}_i$.
Especially,   equation \eqref{eq:5.10} can be transformed as
\begin{equation}(M_2-\frac{|f_{i+1}|^2}{2|f_i|^2})\langle f_{i-1}, \alpha\rangle=0.\label{5.19}\end{equation}

In harmonic sequence \eqref{5.2}, $f_{i-1}$ is a local section of $\underline\alpha\oplus\underline\beta$. Then 
$\langle f_{i-1}, \alpha\rangle=0$ is equivalent to $\underline{\beta}=\underline{f}_{i-1}$.
With it, to classify $\phi$, we shall divide our discussion into two cases, according as
 $\underline{\beta}=\underline{f}_{i-1}$, whereas $\underline{\beta}\neq \underline{f}_{i-1}$.
In the following we discuss these two cases respectively to prove the following two lemmas.

\begin{lemma}
Let $\phi:S^2\rightarrow G(2,N;\mathbb{C})$ be a linearly full conformal minimal immersion in \eqref{eq:5.1} with  $\nabla B=0$ and $\underline{\alpha}_{-1}\neq \underline{f}_{i+1}, \ \underline{\beta}=\underline{f}_{i-1}$, then up to an isometry of
$G(2,N;\mathbb{C})$, $\phi$ belongs to  case (2) in Theorem 1.3.
\end{lemma}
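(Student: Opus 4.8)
The plan is to use the hypothesis $\underline{\beta}=\underline{f}_{i-1}$ to rigidify the splitting $\underline{\phi}=\underline{\alpha}\oplus\underline{f}_i$, and then to propagate this structure through the relations (5.2)--(5.16) produced by $\nabla B=0$ and $\underline{\alpha}_{-1}\neq\underline{f}_{i+1}$ until $n$, $i$ and the position of $\underline{\alpha}$ in $\mathbb{C}^N$ are forced.

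First I would show $\langle f_{i-1},\alpha\rangle=0$. Since $\beta=A''_\phi(f_i)=\pi_{\phi^{\perp}}(\overline{\partial}f_i)$, $\overline{\partial}f_i=-\tfrac{|f_i|^2}{|f_{i-1}|^2}f_{i-1}$ and $f_{i-1}\perp f_i$, we have $\beta=-\tfrac{|f_i|^2}{|f_{i-1}|^2}\bigl(f_{i-1}-\tfrac{\langle f_{i-1},\alpha\rangle}{|\alpha|^2}\alpha\bigr)$, so $\underline{\beta}=\underline{f}_{i-1}$ forces either $\langle f_{i-1},\alpha\rangle=0$ or $\underline{\alpha}=\underline{f}_{i-1}$. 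The second case makes $\underline{\phi}=\underline{f}_{i-1}\oplus\underline{f}_i$ a Frenet pair, whose $\partial''$-image has rank one, contradicting rank $\partial''\underline{\phi}=2$. Hence $\langle f_{i-1},\alpha\rangle=0$, and therefore $\alpha_1=\partial\alpha-\tfrac{\langle\partial\alpha,\alpha\rangle}{|\alpha|^2}\alpha=0$: the bundle $\underline{\alpha}$ is an anti-holomorphic line, $\beta=-\tfrac{|f_i|^2}{|f_{i-1}|^2}f_{i-1}$, and $\underline{\gamma}$ is the $\partial''$-associated bundle $\underline{\alpha}_{-1}$ of $\underline{\alpha}$.

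Next, from the metric $\lambda^2=\tfrac{5|f_{i+1}|^2}{2|f_i|^2}$ of (5.15) and the constancy of $K$, Lemma 2.2 (Bolton et al.) gives $f_i=UV^{(n)}_i$ up to $U(n+1)$; substituting $\beta=-\tfrac{|f_i|^2}{|f_{i-1}|^2}f_{i-1}$ into (5.14) yields $|f_i|^4=|f_{i-1}|^2|f_{i+1}|^2$, which for a Veronese section forces $n=2i$, so $|f_i|^2$ is constant and $K=\tfrac{8}{5i(i+1)}$. The same constant--curvature reasoning applied to the anti-holomorphic line $\underline{\alpha}$ shows it is, up to a unitary, an anti-holomorphic Veronese curve, and (5.14) pins its curvature datum $\tfrac{|\alpha_{-1}|^2}{|\alpha|^2}$ to be that of $V^{(k)}_k$ with $k=\tfrac{i(i+1)}{2}$.

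It remains to exhaust the equations (5.8)--(5.11) (equivalently, $[[A_{\overline{z}},A_z],P]=M_2P$), and this is where I expect the main difficulty. The point is that for $i\geq 2$ one has $k\geq 3$, so $\overline{\partial}\alpha_{-1}$ has a nonzero component along the next term $\alpha_{-2}$ of the $\partial''$-sequence of $\underline{\alpha}$, a direction orthogonal to $\alpha_{-1}$, $f_i$ and $f_{i+1}$, which are the only directions appearing on the right-hand side of (5.8); this is a contradiction. Hence $i=1$, $n=2$, $k=1$, so $\underline{f}_i=\underline{V}^{(2)}_1$ and $\underline{f}_{i+2}=0$; substituting back into (5.7), whose terms all vanish once $k=1$ (the $f_{i+1}f_{i-1}^{*}$-terms cancel, and the rest vanish because $c\lambda_3$ and $|f_i|^2$ are constant while $\overline{\partial}\alpha_{-1}\parallel\alpha_{-1}$), gives $P=0$, hence $\|B\|^2=0$ and $K=\tfrac45$. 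Since $\underline{\alpha}=\widehat{\underline{V}}^{(1)}_1$ occupies the $2$-dimensional orthogonal complement of $\mathrm{span}\{f_0,f_1,f_2\}\cong\mathbb{C}^3$, linear fullness forces $N=5$ and
$$\underline{\phi}=\widehat{\underline{V}}^{(1)}_1\oplus\underline{V}^{(2)}_1:\ S^2\to G(2,5;\mathbb{C}).$$
Conversely, a direct check of the identity (3.2) in Lemma 3.2 shows this $\phi$ has parallel second fundamental form with $K=\tfrac45$ and $\|B\|^2=0$, which is exactly case (2) of Theorem 1.3.
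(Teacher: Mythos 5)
Your structural setup is clean and largely sound: the direct derivation of $\langle f_{i-1},\alpha\rangle=0$ from $\beta=\pi_{\phi^{\perp}}(\overline{\partial}f_i)=-\tfrac{|f_i|^2}{|f_{i-1}|^2}\bigl(f_{i-1}-\tfrac{\langle f_{i-1},\alpha\rangle}{|\alpha|^2}\alpha\bigr)$ is a nice shortcut (the paper only gets the conditional identity $(M_2-\tfrac{|f_{i+1}|^2}{2|f_i|^2})\langle f_{i-1},\alpha\rangle=0$ from \eqref{eq:5.10}), and the conclusions $n=2i$, $K=\tfrac{8}{5i(i+1)}$, and the identification of $\underline{\alpha}$ as an anti-holomorphic Veronese curve of degree $k=\tfrac{i(i+1)}{2}$ are all correct.

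The gap is in your exclusion of $i\geq 2$. You argue that $\overline{\partial}\alpha_{-1}$ has a component along $\alpha_{-2}$, ``a direction orthogonal to $\alpha_{-1}$, $f_i$ and $f_{i+1}$,'' hence incompatible with the right-hand side of \eqref{eq:5.8}. That orthogonality is never justified, and it is false precisely in the configuration your constraints allow: when $k=\tfrac{i(i+1)}{2}$ coincides with $n=2i$, i.e.\ $i=3$, $n=6$, the anti-holomorphic Veronese line $\underline{\alpha}$ can be $\underline{f}_6$ itself, so that $\alpha_{-1}\propto f_5$ and $\alpha_{-2}\propto f_4=f_{i+1}$ --- exactly one of the directions present on the right of \eqref{eq:5.8}, so no contradiction arises. (Your argument also silently assumes $M_2\neq\tfrac{|\alpha_{-1}|^2}{|\alpha|^2}$, which is not established.) The paper has to kill the $n\geq i+2$ range by two separate explicit computations: for $M_2\neq 0$ it derives $\underline{\alpha}_{-1}=\underline{f}_{i+2}$, hence $(i,n)=(3,6)$ and $\underline{\phi}=\underline{f}_3\oplus\underline{f}_6\subset G(2,7;\mathbb{C})$, which is then shown by direct substitution into the criterion \eqref{eq:3.2} \emph{not} to have parallel second fundamental form; for $M_2=0$ it gets $(i,n)=(2,4)$ and computes $\|B\|^2=4\,\mathrm{tr}\,PP^{*}=\tfrac{64}{75}$ from the explicit $P$, contradicting the value $\tfrac{16}{15}$ forced by $K=\tfrac{\|B\|^2}{4}$ and $K+\tfrac{\|B\|^2}{2}=\tfrac45$. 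Neither of these configurations is eliminated by your orthogonality reasoning, so your proof as written does not rule them out. A secondary, smaller gap: in the surviving case $i=1$ you assert that the span of the $\underline{\alpha}$-curve is orthogonal to $\mathrm{span}\{f_0,f_1,f_2\}$; this requires showing $\langle\alpha_{-1},f_0\rangle=0$ (equivalently that $\alpha_{-1}$ has no $\beta$-component), which the paper extracts from \eqref{eq:5.8} via $M_2=\tfrac{2|f_{i+1}|^2}{|f_i|^2}$ and $\overline{\partial}\alpha_{-1}\parallel\alpha_{-1}$ before invoking mutual orthogonality of the two Veronese sequences.
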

\begin{proof}
$\underline{\beta}=\underline{f}_{i-1}$ preserves the following relation
$$\frac{\langle \beta, f_{i-1}\rangle\langle f_{i-1}, \beta\rangle|f_i|^2}{|f_{i-1}|^4|\beta|^2}=\frac{|f_{i}|^2}{|f_{i-1}|^2}, \quad \langle f_{i+2}, \beta\rangle=0.$$
This together with \eqref{eq:5.14} show us
$l_{i-1}=l_i,\quad \delta^{(n)}_{i-1}=\delta^{(n)}_i$.
So we get \begin{equation}n=2i, \quad a=1.\label{eq:5.17}\end{equation}
In the following we discuss $\phi$ in cases $n\geq i+2, \ M_2 =0$;  $n\geq i+2, \ M_2 \neq0$ and
$n=i+1$ respectively.

\emph{At first}, \emph{if  $n\geq i+2, \ M_2 =0$.} By comparing $K=\frac{\|B\|^2}{4}$ and \eqref{eq:5.16}   $$K=\frac{4}{15}, \quad \|B\|^2=\frac{16}{15}.$$ In substituting the metric of $\phi$ shown in \eqref{eq:5.15} into the equation $K=\frac{4}{15}$, we have
$$\frac{4}{15}=K=-\frac{2}{\lambda^2}\partial\overline\partial\log \lambda^2=-\frac{4}{5l_i}\partial\overline\partial\log l_i=\frac 4 5-\frac{4l_{i+1}}{5l_i}=\frac 4 5-\frac{4\delta^{(n)}_{i+1}}{5\delta^{(n)}_i},$$
which implies
$$i=2, \quad n=4.$$
With a little change of notation,  equations \eqref{eq:5.8}-\eqref{eq:5.11} can be rewritten  in the form
\begin{equation}\partial\lambda_2+\lambda_2\frac{\langle  \partial \alpha, \alpha\rangle}{|\alpha|^2} + \lambda_2\frac{\langle \alpha_{-1}, \overline{\partial} \alpha_{-1} \rangle}{|\alpha_{-1}|^2}=0,\label{eq:5.18}\end{equation}
\begin{equation}\partial\lambda_3+\lambda_3\frac{\langle  \partial f_i, f_i\rangle}{|f_i|^2} + \lambda_3\frac{\langle \beta, \overline{\partial} \beta \rangle}{|\beta|^2}=0, \label{eq:5.19}\end{equation}
$$\langle \alpha_{-1}, f_{i+2}\rangle=0, \ \langle \beta, f_{i+2}\rangle=0, \ \langle \overline{\partial}\beta, \alpha_{-1}\rangle=0, \ \overline{\partial}\alpha_{-1}=\frac{\langle \overline{\partial} \alpha_{-1}, \alpha_{-1} \rangle}{|\alpha_{-1}|^2}\alpha_{-1}.$$Then expression of $P$  becomes
 $$P=\lambda_1f_4f_2^{*}+\lambda_3\frac{\langle f_0, \overline{\partial} \beta\rangle}{|f_0|^2}f_2f_0^{*}$$ 
 by  substituting \eqref{eq:5.18} \eqref{eq:5.19} into \eqref{eq:5.7}.  Clearly, straightforward  calculations give  the square of the length of the second fundamental form
$$\|B\|^2= 4tr PP^*=4[|\lambda_1|^2|f_2|^2|f_4|^2+|\lambda_3|^2\frac{\langle f_0, \overline{\partial} \beta\rangle\langle \overline{\partial} \beta, f_0\rangle|f_2|^2}{|f_0|^2}]=\frac{64}{75},$$which contradicts the fact that $\|B\|^2=\frac{16}{15}$.

\emph{Next}, \emph{if $n\geq i+2, \ M_2 \neq0$.} We shall prove that this assumption is also not true. 
For this purpose, by using of the first relation in \eqref{eq:5.14} we get
$$\langle f_{i+2}, \alpha\rangle=\langle \partial f_{i+1}, \alpha\rangle=-\langle f_{i+1}, \overline\partial \alpha\rangle=-\langle f_{i+1}, \alpha_{-1}\rangle=0.$$
This together with $\langle f_{i+2}, \beta\rangle=0$ and $a=1$ reduce 
 \eqref{eq:5.11} to
$M_2f_{i+2}=-\frac{\langle f_{i+2}, \alpha_{-1}\rangle}{|\alpha|^2}\alpha_{-1}$,  it further  implies that \begin{equation}\underline{\alpha}_{-1}=\underline{f}_{i+2}, \quad M_2=-\frac{|f_{i+1}|^2}{2|f_i|^2},\label{eq:5.20}\end{equation} then we find
$$K=\frac{2}{15}, \quad \|B\|^2=\frac{4}{3}$$
by  \eqref{eq:5.15} \eqref{eq:5.16} and \eqref{eq:5.20}. Observe the induced metric and Gauss curvature of $\phi$, similarly we verify 
$i=3, \quad n=6$ and $$\underline{\phi}=\underline{f}_{3}\oplus \underline{f}_{6}:S^2\rightarrow G(2,7;\mathbb{C}).$$
Applying Lemma 3.2,  with a straightforward calculation we know that, for any $U\in U(7)$, $$\underline{\phi}=\underline{UV}_{3}^{(6)} \oplus \underline{UV}_{6}^{(6)}: S^2\rightarrow G(2,7;\mathbb{C})$$ does not have parallel second fundamental form.

\emph{At last}, \emph{if $n=i+1$.} This relation together with   \eqref{eq:5.17} implies $i=1, \quad n=2$.
From  \eqref{eq:2.6} and \eqref{eq:5.15}, the Gauss curvature is
$$K=-\frac{2}{\lambda^2}\partial\overline\partial \log\lambda^2=-\frac{4}{5l_1}\partial\overline\partial \log l_1=
\frac 8 5 - \frac{4l_0}{5l_1}=\frac 8 5 - \frac{4\delta^{(2)}_0}{5\delta^{(2)}_1}=\frac 4 5.
$$
Then $$\|B\|^2=0, \quad M_2=\frac{2|f_{i+1}|^2}{|f_i|^2}.$$
Substituting it into \eqref{eq:5.8}, we get $$\overline{\partial}\alpha_{-1}=\frac{\langle \overline{\partial} \alpha_{-1}, \alpha_{-1} \rangle}{|\alpha_{-1}|^2}\alpha_{-1}.$$
Then by using \eqref{eq:2.1}, the subbundle  $\underline{\alpha}$ is harmonic in $\mathbb{C}P^1$ since $\underline{\phi}=\underline{\alpha}\oplus \underline{f}_i$ is harmonic and $\underline{\beta}=\underline{f}_{i-1}$.
Hence there exists  local sections $g_0, \ g_1$  such that  $$\underline{\alpha}_{-1}=\underline{g}_0, \ \underline{\alpha}=\underline{g}_1,$$ where $g_0$ is holomorphic (without loss of generality, we assume $\overline{\partial}g_0=0$) and $\{\underline{g}_0, \underline{g}_1\}$ and $\{\underline{f}_0, \underline{f}_1, \underline{f}_2\}$ are mutually orthogonal harmonic sequences in $\mathbb{C}P^1$ and $\mathbb{C}P^2$ respectively with constant curvature. Adding zeros to $V^{(1)}_1$ and $V^{(2)}_1$ respectively, let $\widehat{V}^{(1)}_1=\frac{1}{1+z\overline{z}}(0, \ 0, \ 0, \ -\overline{z}, \ 1)^T$ and $V^{(2)}_1=\frac{1}{1+z\overline{z}}(-2\overline{z}, \ \sqrt{2}(1-z\overline{z}), \ 2z, \ 0, \ 0)^T$, then,  up to an isometry of $G(2,5;\mathbb{C})$, there exists some $U\in U(5)$,
 $$\underline{\phi}
=\underline{U}\underline{\widehat{V}}_{1}^{(1)} \oplus \underline{UV}_{1}^{(2)}: S^2\rightarrow G(2,5;\mathbb{C}).$$ By an immediately computation, such $\phi$ is totally geodesic with $K=\frac{4}{5}$, it belongs to  case (2) in Theorem 1.3. Thus we get the conclusion.
\end{proof}

In the following we discuss the  case $\underline{\beta}\neq \underline{f}_{i-1}$. Since $f_{i-1}$ is a local section of $\underline\alpha\oplus\underline\beta$, $\underline{\beta}\neq \underline{f}_{i-1}$ means $\langle  f_{i-1}, \alpha\rangle\neq 0$. From \eqref{5.19} we have $M_2=\frac{|f_{i+1}|^2}{2|f_{i}|^2}$. Then there holds $K-\frac{\|B\|^2}{4}=\frac{1}{5}$. By combining it with \eqref{eq:5.16}  we conclude
$$K=\frac{2}{5}, \quad \|B\|^2=\frac{4}{5}.$$ 
Now by using of $M_2=\frac{|f_{i+1}|^2}{2|f_{i}|^2}$ and the first three relations in \eqref{eq:5.14},
\eqref{eq:5.9} can be reduced to
$$f_{i+2}=-\frac{\langle f_{i+2}, \alpha_{-1}\rangle}{|\alpha|^2}\alpha_{-1}-2\frac{\langle f_{i+2}, \beta\rangle}{|\beta|^2}\beta.$$
This together with the first relation in \eqref{5.3} show $f_{i+2}=0$, i.e., $n=i+1$. Moreover, it follows from $K=\frac{2}{5}$ that 
$$i=3, \quad n=4.$$
Then 
from \eqref{eq:5.8}-\eqref{eq:5.11} we find
$\overline{\partial}\beta =\frac{\langle \overline{\partial} \beta, \alpha_{-1} \rangle}{|\alpha_{-1}|^2}\alpha_{-1} + \frac{\langle \overline{\partial} \beta, \beta \rangle}{|\beta|^2}\beta$, and
relations (5.18) and (5.19) also hold here.
In order to give a  complete  classification of such $\phi$, we need the following important equation
 $$\langle \beta, f_{1}\rangle=0,$$ which is obtained  by substituting the expression of $\lambda_3$  into \eqref{eq:5.19}, and using \eqref{eq:5.15}. Finally, we end our classification for such harmonic maps  by the following lemma.

\begin{lemma}
Let $\phi: S^2\rightarrow G(2,N;\mathbb{C})$  be a  linearly full conformal minimal immersion  in \eqref{eq:5.1} with  $\nabla B=0$ and $\underline{\alpha}_{-1}\neq \underline{f}_{i+1}, \ \underline{\beta}\neq \underline{f}_{i-1}$, then up to an isometry of $G(2,N;\mathbb{C})$,  $\phi$ belongs to case (3) in Theorem 1.3.
\end{lemma}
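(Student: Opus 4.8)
The plan is to start from everything already derived before the statement: we are in the case $i=3$, $n=4$, so $K=2/5$, $\|B\|^2=4/5$, and $\langle\beta,f_1\rangle=0$; moreover, since $\underline f_3$ has constant curvature, the rigidity theorem of Bolton et al.\ lets us assume, after a $U(5)$-motion, that $\underline f_0,\dots,\underline f_4$ is the Veronese sequence $\underline V^{(4)}_0,\dots,\underline V^{(4)}_4$ in $\mathbb{C}P^4\subset\mathbb{C}^N$. Then $\{f_0,\dots,f_4\}$ is a pointwise orthogonal frame of a fixed $\mathbb{C}^5\subset\mathbb{C}^N$ with explicit norms ($|f_2|^2=24$, and so on). Writing $\underline\phi=\underline\alpha\oplus\underline f_3$ with $\langle\alpha,f_3\rangle=0$, I would expand $\alpha=\sum_k a_kf_k+w$ with $w$ orthogonal to $\mathbb{C}^5$; the goal is to show that $w$ is a nonzero constant vector, which then forces $N=6$ and the form in case (3).

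First I would cut the coefficients down. Since $\beta$ is a multiple of $f_2-\frac{\langle f_2,\alpha\rangle}{|\alpha|^2}\alpha$ and $\underline\beta\neq\underline f_2$ forces $\langle f_2,\alpha\rangle\neq0$, the relation $\langle\beta,f_1\rangle=0$ forces $a_1=0$. Because $\alpha_{-1}\in\underline\phi_{-1}\perp f_3$, we get $0=\langle\alpha_{-1},f_3\rangle=\langle\overline\partial\alpha,f_3\rangle=-\langle\alpha,f_4\rangle$, so $a_4=0$. As $|f_2|^2$ is constant we have $\partial f_2=f_3$, and since $\alpha_1$ is a multiple of $f_3$ (Section 5) the vector $\partial\alpha$ lies in $\mathrm{span}\{\alpha,f_3\}$; comparing $f_1$-components in the Veronese frame then forces $a_0=0$. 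Hence $\alpha=a_2f_2+w$ with $a_2\neq0$; rescaling the section, take $\alpha=f_2+w$, so that $\partial\alpha=f_3$ and therefore $\partial w=0$, i.e.\ $w$ is anti-holomorphic.

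Next I would invoke the orthogonality $\langle\alpha_{-1},\beta\rangle=0$ established just before \eqref{eq:5.3}. Using $\partial w=0$, both $\alpha_{-1}$ and $\beta$ can be written explicitly in terms of $f_1,f_2,w,\overline\partial w$ (with $\underline\beta=\mathrm{span}(|w|^2f_2-|f_2|^2w)$), and $\langle\alpha_{-1},\beta\rangle=0$ collapses to $\langle\overline\partial w,w\rangle=0$, so $|w|^2$ is constant; an anti-holomorphic map of constant norm is constant (Liouville), hence $w$ is a constant vector. It is nonzero, for $w=0$ would make $\underline\phi=\underline f_2\oplus\underline f_3$ a Frenet pair with $\mathrm{rank}\,\partial''\underline\phi=1$, contrary to hypothesis. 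A short computation of the harmonic sequence then gives $\underline\phi_1=\underline f_4$, $\underline\phi_{-1}=\underline f_1\oplus\mathrm{span}(|w|^2f_2-|f_2|^2w)$, $\underline\phi_{-2}=\underline f_0$, $\underline\phi_{-3}=0$, so by linear fullness $\mathbb{C}^N=\mathrm{span}\{f_0,\dots,f_4,w\}$ and $N=6$. Finally $|w|^2$ is pinned down by the metric: \eqref{eq:5.2} together with \eqref{eq:5.14} gives $\frac{|\alpha_{-1}|^2}{|\alpha|^2}=\frac12\frac{|f_4|^2}{|f_3|^2}$, and substituting $|\alpha_{-1}|^2=\frac{|f_2|^4}{|f_1|^2}$, $|\alpha|^2=|f_2|^2+|w|^2$ and the explicit Veronese norms yields $|w|^2=48$.

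After one more $U(6)$-motion carrying $w/|w|$ to the sixth coordinate vector, $\phi=\underline V^{(4)}_3\oplus\underline\alpha:S^2\to G(2,6;\mathbb{C})$ with $\alpha=({V_2^{(4)}}^T,\sqrt{48}e^{\sqrt{-1}\theta})^T$, which is case (3) of Theorem 1.3; to close, one verifies directly (substituting $A_z$ and $P$ into \eqref{eq:3.2}, or checking \eqref{eq:3.1}) that this $\phi$ does have $\nabla B=0$, with $K=2/5$ and $\|B\|^2=4/5$. The main obstacle is the third paragraph: the bundle relations only tell us a priori that $\underline\alpha$ is spanned by $f_2$ plus an unknown $\mathbb{C}^5$-orthogonal function, and extracting the rigidity "$w$ is a nonzero constant vector" from the remaining system \eqref{eq:5.8}--\eqref{eq:5.11} and $\langle\alpha_{-1},\beta\rangle=0$ is the delicate point; once that is in hand, identifying $N$ and $|w|^2$ is routine bookkeeping.
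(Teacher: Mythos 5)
Your argument is correct and reaches the same normal form $\underline{\phi}=\underline{V}^{(4)}_3\oplus\underline{\alpha}$ with $\alpha=({V^{(4)}_2}^T,\sqrt{48}e^{\sqrt{-1}\theta})^T$, but by a genuinely different route. The paper never expands $\alpha$ in the Veronese frame; instead it iterates the Burstall--Wood backward replacement (Lemma 5.1) three times, producing $\underline{\phi}^{(1)}=\underline{\alpha}\oplus\underline{\beta}$, $\underline{\phi}^{(2)}=\underline{\alpha}_{-1}\oplus\underline{\gamma}$, $\underline{\phi}^{(3)}=\underline{\alpha}_{-2}\oplus\underline{\gamma}^1$, rules out the Frenet-pair alternative for $\phi^{(1)}$ and $\phi^{(2)}$ using $\langle\beta,f_1\rangle=0$, identifies $\phi^{(3)}$ as a mixed pair $\underline{c}_0\oplus\underline{f}_0$, and unwinds to get $\alpha=f_2+x_5c_0$ with $\partial x_5=\overline{\partial}x_5=0$ and $|x_5|^2=2|f_2|^2=48$. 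You replace all of that structural machinery by direct coefficient-killing: $a_1=0$ from $\langle\beta,f_1\rangle=0$ (valid since $\underline{\beta}\neq\underline{f}_2$ forces $\langle f_2,\alpha\rangle\neq0$), $a_4=0$ from $\alpha_{-1}\perp f_3$, $a_0=0$ from $\alpha_1\parallel f_3$ by comparing $f_1$-components, then $\partial w=0$, and constancy of $w$ from $\langle\alpha_{-1},\beta\rangle=0$ (which indeed collapses to $\langle\overline{\partial}w,w\rangle=0$, whence $|\overline{\partial}w|^2=\partial\langle\overline{\partial}w,w\rangle=0$). Your computation of $|w|^2=48$ from $\frac{|\alpha_{-1}|^2}{|\alpha|^2}=\frac{|f_4|^2}{2|f_3|^2}$ and the explicit norms \eqref{eq:2.10} agrees with the paper's $|x_5|^2=2|f_2|^2$. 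What your approach buys is self-containedness and concreteness --- no appeal to the replacement lemma beyond the facts already recorded in the setup of Section 5 ($\alpha_1=\frac{\langle\alpha,f_2\rangle}{|f_2|^2}f_3$, $\alpha_{-1}\in\underline{\phi}_{-1}$, $\langle\alpha_{-1},\beta\rangle=0$); what the paper's approach buys is independence from the explicit Veronese normal form until the very last step, and a cleaner explanation of \emph{why} a constant direction $c_0$ must appear (the terminal mixed pair). Both proofs correctly defer the final verification that the resulting $\phi$ satisfies $\nabla B=0$ to a direct check of \eqref{eq:3.2}.
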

\begin{proof}
By previous analysis, $\phi^{(1)}$ gives rise  to the following harmonic sequence
\begin{equation}0\stackrel{\partial{''}}
{\longleftarrow}\cdots\stackrel{\partial{''}}
{\longleftarrow}\underline{\alpha}_{-1}\stackrel{\partial{''}}
{\longleftarrow}\underline{\phi}^{(1)}=\underline{\alpha}\oplus \underline{\beta}\stackrel{\partial{'}} {\longrightarrow}
\underline{f}_{3}\stackrel{\partial{'}} {\longrightarrow}
\underline{f}_{4}\stackrel{\partial{'}}
{\longrightarrow}0.\label{eq:5.23}\end{equation}

At first we claim
\begin{equation}A^{''}_{\phi^{(1)}}(\underline{ker}A^{'\bot}_{\phi^{(1)}})\neq0.\label{eq:5.24}\end{equation}
Otherwise if $A^{''}_{\phi^{(1)}}(\underline{ker}A^{'\bot}_{\phi^{(1)}})=0$, then according to Lemma 3.3 we conclude $\phi^{(1)}$ is a Frenet pair and $\underline{\phi}^{(1)}=\underline{f}_1\oplus \underline{f}_2$. Hence we have $\underline{\beta}=\underline{f}_{2}$ by means of the equation  $\langle \beta, f_{1}\rangle=0$ , which contradicts the fact $\underline{\beta}\neq\underline{f}_{i-1}$.
Thus \eqref{eq:5.24}  holds.

In view of  properties of harmonic sequence \eqref{eq:5.23} we notice that,  $\underline{f}_{2}$ is a subbundle with rank one of $\underline{\phi}^{(1)}$, let $$\underline{\gamma} = \underline{f}^{\bot}_{2} \cap \underline{\phi}^{(1)},$$ then $\underline{\phi}^{(1)}$ can be rewritten as
$\underline{\phi}^{(1)}=\underline{\gamma}\oplus\underline{f}_2:S^2\rightarrow G(2,N;\mathbb{C})$.
Here $\underline{\gamma}$ is an anti-holomorphic subbundle of $\underline{\phi}^{(1)}$, it satisfies $A'_{\phi^{(1)}}(\gamma)=0$ and $A''_{\phi^{(1)}}(f_{2})\neq 0$, i.e. $\underline{\gamma}=\underline{ker}A'_{\phi^{(1)}}, \ A''_{\phi^{(1)}}(\underline{ker}A^{'\perp}_{\phi^{(1)}})\neq 0$. Then Lemma 5.1 shows that, the backward replacement of $\underline{f}_{2}$ produces a new harmonic map $$\underline{\phi}^{(2)}=\underline{\alpha}_{-1}\oplus \underline{\gamma}: S^2\rightarrow G(2,N;\mathbb{C}),$$ it derives a harmonic sequence as follows:
$$0\stackrel{\partial{''}}
{\longleftarrow}\cdots\stackrel{\partial{''}}
{\longleftarrow}\underline{\alpha}_{-2}\stackrel{\partial{''}}
{\longleftarrow}\underline{\phi}^{(2)}=\underline{\alpha}_{-1}\oplus \underline{\gamma}\stackrel{\partial{'}} {\longrightarrow}
\underline{f}_{2}\stackrel{\partial{'}} {\longrightarrow}
\underline{f}_{3}\stackrel{\partial{'}} {\longrightarrow}
\underline{f}_{4}\stackrel{\partial{'}}
{\longrightarrow}0,$$ where $\alpha_{-2}=\overline{\partial}\alpha_{-1}-\frac{\langle \overline{\partial}\alpha_{-1},\alpha_{-1}\rangle}{|\alpha_{-1}|^2}\alpha_{-1}$.

By a similar discussion  we also claim
$$A^{''}_{\phi^{(2)}}(\underline{ker}A^{'\bot}_{\phi^{(2)}})\neq0.$$
Otherwise if $A^{''}_{\phi^{(2)}}(\underline{ker}A^{'\bot}_{\phi^{(2)}})=0$, then  $\phi^{(2)}$ is a Frenet pair from Lemma 3.3, and $\underline{\phi}^{(2)}=\underline{f}_0\oplus \underline{f}_1$. Since $\underline{\alpha}_{-1}\neq \underline{f}_1$, it is possible for us to put $$\alpha_{-1}=f_0+x_1f_1, \ \gamma=-\overline{x}_1|f_1|^2f_0+|f_0|^2f_1, \ \beta=f_2+x_2\gamma,$$
where $x_1$ and $x_2$ are  smooth functions on $S^2$ expect some isolated points.
So it verifies   $x_2=0$ from the relation $\langle \beta, f_{1}\rangle=0$, i.e. $\underline{\beta}=\underline{f}_{2}$, which  is a contradiction.
Thus $A^{''}_{\phi^{(2)}}(\underline{ker}A^{'\bot}_{\phi^{(2)}})\neq0$ holds.

Reusing the above methods,  $\underline{f}_{1}$ is a subbundle with rank one of $\underline{\phi}^{(2)}$, let $\underline{\gamma}^1 = \underline{f}^{\bot}_{1} \cap \underline{\phi}^{(2)}$, then $\underline{\phi}^{(2)}$ can be rewritten as
$\underline{\phi}^{(2)}=\underline{\gamma}^1\oplus\underline{f}_1$.
The backward replacement of $\underline{f}_{1}$ produces a new harmonic map $$\underline{\phi}^{(3)}=\underline{\alpha}_{-2}\oplus \underline{\gamma}^1: S^2\rightarrow G(2,N;\mathbb{C}),$$ it belongs to the following  harmonic sequence:
\begin{equation}0\stackrel{\partial{''}}
{\longleftarrow}\cdots\stackrel{\partial{''}}
{\longleftarrow}\underline{\phi}^{(3)}=\underline{\alpha}_{-2}\oplus \underline{\gamma}^1\stackrel{\partial{'}} {\longrightarrow}
\underline{f}_{1}\stackrel{\partial{'}} {\longrightarrow}
\underline{f}_{2}\stackrel{\partial{'}} {\longrightarrow}
\underline{f}_{3}\stackrel{\partial{'}} {\longrightarrow}
\underline{f}_{4}\stackrel{\partial{'}}
{\longrightarrow}0.\label{eq:5.25}\end{equation}
In  harmonic sequence \eqref{eq:5.25} we notice that $\underline{f}_0$ is a subbundle with rank one of $\underline{\phi}^{(3)}$, it satisfies $A^{''}_{\phi^{(3)}}|f_0=0$, i.e.
 $$A^{''}_{\phi^{(3)}}(\underline{ker}A^{'\bot}_{\phi^{(3)}})=0,$$
hence $\phi^{(3)}$ is a mixed pair.
Suppose therefore that $\underline{\phi}^{(3)}=\underline{g}_m\oplus \underline{f}_0$,
$\underline{g}_m$ and $\underline{f}_0$ are anti-holomorphic and holomorphic curves in $\mathbb{C}P^m$ and $\mathbb{C}P^n$ respectively such that $\underline{h}_1\bot \underline{g}_m$.
Then it is reasonable to put
$$\gamma^{1}=g_m+x_3f_0, \ \alpha_{-2}=-\overline{x}_3|f_0|^2g_m+|g_m|^2f_0, \ \gamma=\gamma^1+x_4f_1,$$ where $x_3$ and $x_4$ are smooth functions on $S^2$ expect some isolated points.
We thus derive $x_4=0$ from the fact $\langle \beta, f_{1}\rangle=0$, which gives
$ \gamma=\gamma^1,\ \underline{\alpha}_{-1}=\underline{f}_1, \ \underline{\alpha}_{-2}=\underline{f}_0$,
it further implies   $$\underline{\phi}^{(3)}=\underline{c}_0\oplus \underline{f}_0, \quad \underline{\phi}^{(1)}=\underline{c}_0\oplus \underline{f}_2,   \quad \underline{\phi}=\underline{f}_3\oplus \underline{\alpha},$$ where $c_0=(0,\ldots,0, 1)^T$.

Next set \begin{equation}\alpha=f_2+x_5c_0, \quad \beta=-\overline{x}_5f_2+|f_2|^2c_0,\label{eq:5.26}\end{equation} where $x_5$  is a smooth functions on $S^2$ expect some isolated points,
then using  \eqref{eq:5.26} we arrive at   
$$ \partial{x}_5=0,  \quad \overline{\partial}x_5=0, \quad |x_5|^2=2|f_2|^2=48,   \quad   \langle {\partial} \alpha, \alpha\rangle=0,   \quad    \langle \overline{\partial} \alpha, \alpha\rangle=0$$
according to 
$$\alpha_{-1}=\overline{\partial} \alpha-\frac{\langle \overline{\partial} \alpha, \alpha\rangle}{|\alpha|^2}\alpha,    \ \alpha_1 =\partial \alpha-\frac{\langle \partial \alpha, \alpha\rangle}{|\alpha|^2}\alpha =\frac{\langle \alpha, f_{2}\rangle}{|f_{2}|^2}f_3,   \ \frac{|\alpha_{-1}|^2}{|\alpha|^2}=\frac{|f_4|^2}{2|f_3|^2},  \ |f_2|^2=|V_2^{(4)}|^2=24$$
respectively, the last equation follows from \eqref{eq:2.10}.

This together with \eqref{eq:3.2} implies finally that such $\underline{\phi}=\underline{f}_3\oplus \underline{\alpha}$ is of second fundamental form, it belongs to case (3)
 in Theorem 1.3, which completes the proof.
\end{proof}

Summing Lemmas 5.2-5.4, we obtain Theorem 1.3 in Section 1.

\section{$\partial^{'}$-irreducible and $\partial^{''}$-irreducible minimal two-spheres with parallel second fundamental form}
Let $\phi$ be a conformal minimal immersion from $S^2$ to $G(2,N;\mathbb{C})$ with rank $\partial^{'}\underline{\phi} =$ rank $\partial^{''}\underline{\phi} =2$. Suppose $\phi$ is of parallel second fundamental form,  in this section we analyze $\phi$ by $r=1$ and $r\geq 2$ respectively, where $r$ is the isotropy order of $\phi$.

\textbf{(a) $r=1$}. Here we consider harmonic map $\phi: S^2\rightarrow G(2,N;\mathbb{C})$ of finite isotropy order $r=1$ under the supposition rank $\partial^{'}\underline{\phi} =$ rank $\partial^{''}\underline{\phi} =2$, then $\phi$ belongs to the following harmonic sequence
\begin{equation}0\stackrel{\partial{''}}
{\longleftarrow}\cdots\stackrel{\partial{''}}
{\longleftarrow}\underline{\phi}_{-1}\stackrel{\partial{''}}
{\longleftarrow}\underline{\phi}\stackrel{\partial{'}} {\longrightarrow}
\underline{\phi}_{1}\stackrel{\partial{'}}
{\longrightarrow}\cdots\stackrel{\partial{'}}
{\longrightarrow}0.\label{eq:6.1}\end{equation}
Since $r=1$, it is easy to see that there exists a local unitary frame $e_0, \ e_1, \ e_2, \ e_3, \ e_4$ of $S^2 \times \mathbb{C}^N$ such that $\{e_0, \ e_1\}$, $\{e_0, \ e_2\}$,  $\{e_3, \ e_4\}$ locally span subbundles $\underline{\phi}_1, \ \underline{\phi}_{-1}, \ \underline{\phi}$ of $S^2 \times \mathbb{C}^N$ respectively, and
$$\underline{\phi}_{-1}\cap \underline{\phi}_1=\underline{e}_0, \ A^{'}_{\phi}(\underline{e}_3)=\underline{e}_0.$$ Let  $W_{-1}=(e_0, \ e_2), \ W_{0}=(e_3, \ e_4), \ W_{1}=(e_0, \ e_1)$,  then
\begin{equation}W_1^{*}W_{-1} = \left(\begin{array}{ccccccc}
1 & 0 \\
0  & 0 \end{array}\right).\label{eq:6.2}\end{equation}
Observing the fact $\phi=W_0W_0^{*}$,  then by using \eqref{eq:2.4}, it is not difficult to get the following  equations:
$$ A_{\overline{z}}=W_{-1}\Omega_{-1}^{*}W_{0}^{*}+W_0\Omega_{0}^{*}W_{1}^{*},$$
$$
\begin{array}{lll} [A_{\overline{z}}, [A_z, A_{\overline{z}}]] & = &
-2W_{-1}\Omega_{-1}^{*}\Omega_{-1}\Omega_{-1}^{*}W_0^{*}-2W_{0}\Omega_{0}^{*}\Omega_{0}\Omega_{0}^{*}W_1^{*}+W_{-1}\Omega_{-1}^{*}\Omega_{0}^{*}\Omega_{0}W_0^{*}
\\
& & +W_{0}\Omega_{0}^{*}W_{1}^{*}W_{-1}\Omega_{-1}^{*}\Omega_{-1}W_{-1}^{*}
+W_{1}\Omega_{0}\Omega_{0}^{*}W_{1}^{*}W_{-1}\Omega_{-1}^{*}W_{0}^{*}
\\
& & +W_{0}\Omega_{-1}\Omega_{-1}^{*}\Omega_{0}^{*}W_1^{*}.\end{array} $$
Hence we immediately get
\begin{equation}  M_1\Omega_{-1}^{*}=-2\Omega_{-1}^{*}\Omega_{-1}\Omega_{-1}^{*}+\Omega_{-1}^{*}\Omega_{0}^{*}\Omega_{0}
+W_{-1}^{*}W_{1}\Omega_{0}\Omega_{0}^{*}W_{1}^{*}W_{-1}\Omega_{-1}^{*},\label{eq:6.3}\end{equation}
\begin{equation} M_1\Omega_{0}^{*}=-2\Omega_{0}^{*}\Omega_{0}\Omega_{0}^{*}+\Omega_{-1}\Omega_{-1}^{*}\Omega_{0}^{*}
+\Omega_{0}^{*}W_{1}^{*}W_{-1}\Omega_{-1}^{*}\Omega_{-1}W_{-1}^{*}W_{1} \label{eq:6.4}\end{equation}
from the fact $[A_{\overline{z}}, [A_z, A_{\overline{z}}]]=M_1A_{\overline{z}}$.
For convenience, using relation $A^{'}_{\phi}(\underline{e}_3)=\underline{e}_0$, it is possible for us to put
\begin{equation}\Omega_{-1} = \left(\begin{array}{ccccccc}
x & y \\
z  & w \end{array}\right),
\quad \Omega_{0} = \left(\begin{array}{ccccccc}
\lambda & \nu \\
0  & \mu \end{array}\right), \label{eq:6.5}\end{equation}
which are both $\left(2 \times 2\right)$-matrices of rank two. By comparing every elements of  matrix $\Omega_0$, condition  \eqref{eq:6.4} is equivalent to the equations
\begin{equation}\left\{
\begin{array}{l} M_1 \overline{\lambda} = \overline{\lambda}(-2|\lambda|^2-2|\nu|^2+2|x|^2+|y|^2+|z|^2)+\overline{\nu}(x\overline{z}+y\overline{w}), \\
0 = \overline{\mu}(x\overline{z}+y\overline{w}-2\overline{\lambda}\nu), \\
M_1\overline{\mu} = \overline{\mu}(-2|\nu|^2-2|\mu|^2+|z|^2+|w|^2),\\
M_1\overline{\nu} = \overline{\nu}(-2|\lambda|^2-2|\mu|^2-2|\nu|^2+|x|^2+2|z|^2+|w|^2)+\overline{\lambda}(\overline{x}z+\overline{y}w)
\end{array}\right. \label{eq:6.6}\end{equation}
 by  \eqref{eq:6.2} and  \eqref{eq:6.5}. From above equations in \eqref{eq:6.6}  we claim
\begin{equation}\nu=0.\label{eq:6.7}\end{equation}
Otherwise by using $\lambda\neq 0, \ \mu \neq 0$ and $\nu\neq0$ we derive
\begin{equation}\left\{
\begin{array}{l}
 x\overline{z}+y\overline{w}=2\overline{\lambda}\nu, \\
M_1 = -2|\mu|^2-2|\nu|^2+|z|^2+|w|^2,\\
M_1= -2|\mu|^2-2|\nu|^2+|x|^2+2|z|^2+|w|^2,
\end{array}\right. \label{eq:6.8}\end{equation}
 then $x=0, \ z=0$ can be obtained by comparing the second and the third equations of \eqref{eq:6.8}, which contradicts our supposition rank $\Omega_{-1}=2$ and then verifies \eqref{eq:6.7}.
So we have  the following conclusions
\begin{equation}\left\{
\begin{array}{l} M_1 = -2|\lambda|^2+2|x|^2+|y|^2+|z|^2, \\M_1 = -2|\mu|^2+|z|^2+|w|^2,\\
 x\overline{z}+y\overline{w}=0
\end{array}\right. \label{eq:6.9}\end{equation}
hold.

Next we   consider \eqref{eq:6.3}  by induction. Applying \eqref{eq:6.2}  and \eqref{eq:6.5}, condition \eqref{eq:6.3}   holds if and only if the following equations
\begin{equation}(M_1+2|x|^2+2|y|^2-2|\lambda|^2)\overline{x} = 0,\label{eq:6.10}\end{equation}
\begin{equation}(M_1+2|x|^2+2|y|^2-|\lambda|^2)\overline{y} = 0,\label{eq:6.11}\end{equation}
\begin{equation}(M_1+2|z|^2+2|w|^2-|\lambda|^2-|\mu|^2)\overline{z} = 0,\label{eq:6.12}\end{equation}
\begin{equation}(M_1+2|z|^2+2|w|^2-|\mu|^2)\overline{w} = 0\label{eq:6.13}\end{equation}
hold, which implies $x=0$ or $y=0$
by observing \eqref{eq:6.10}  and \eqref{eq:6.11}.

Here we claim \begin{equation} x=0.\label{eq:6.14}\end{equation} In order to achieve our objective, we first discuss the case $x\neq 0$. In view of \eqref{eq:6.10}-\eqref{eq:6.13} we get $y=z=0$ and $w\neq 0$.
\eqref{eq:6.10} becomes
$M_1=2|\lambda|^2-2|x|^2$, comparing it with the first equation of  \eqref{eq:6.9},  $M_1=0$, which is impossible. At present, we have made \eqref{eq:6.14} true,
then it can be clearly seen that \eqref{eq:6.3} becomes
\begin{equation}\left\{
\begin{array}{l}
 w=0, \\
M_1 = |\lambda|^2-2|y|^2,\\
M_1= |\lambda|^2+|\mu|^2-2|z|^2.
\end{array}\right. \label{eq:6.15}\end{equation}
Furthermore by  \eqref{eq:6.9} and  \eqref{eq:6.15}, we get
$|y|^2=|\mu|^2=-2M_1, \quad |z|^2=|\lambda|^2=-3M_1$,  which implies
$$L_{-1}=L_0=-5M_1$$ by using $L_{-1} =\textrm{tr} (\Omega_{-1} \Omega^*_{-1})$ and $L_{0} =\textrm{tr} (\Omega_{0} \Omega^*_{0})$.
It shows that $\phi: S^2 \rightarrow G(2, N; \mathbb{C})$ is totally real with $\nabla B=0$ and $r=1$. Then from \cite{Berndt}  and Theorem 1.1 of \cite{JGA},
$$\underline{\phi}=\underline{UV}^{(4)}_1\oplus \underline{UV}^{(4)}_3: S^2 \rightarrow G(2,5; \mathbb{C})$$  for some $U\in U(5)$ satisfies $\underline{\overline{UV}}^{(4)}_0=\underline{UV}^{(4)}_4$,
 which is totally geodesic with $K=\frac{1}{5}$, and it is congruent to the case (1) in Theorem 1.4.

\textbf{(b) $r\geq 2$}. In this part we consider $\partial^{'}$-irreducible and $\partial^{''}$-irreducible harmonic map $\phi: S^2\rightarrow G(2,N;\mathbb{C})$ of isotropy order $r\geq 2$ (including the strongly isotropic case), here  $\phi$ also derives the harmonic sequence given in \eqref{eq:6.1}. Since $r\geq 2$, $\phi_{-1}, \ \phi$ and $\phi_1$ are mutually orthogonal and
$W_1^{*}W_{-1} = 0$.
Similar calculations give
$$
\begin{array}{lll} [A_{\overline{z}}, [A_z, A_{\overline{z}}]] & = &
-2W_{-1}\Omega_{-1}^{*}\Omega_{-1}\Omega_{-1}^{*}W_0^{*}-2W_{0}\Omega_{0}^{*}\Omega_{0}\Omega_{0}^{*}W_1^{*}
\\
& & +W_{-1}\Omega_{-1}^{*}\Omega_{0}^{*}\Omega_{0}W_0^{*} +W_{0}\Omega_{-1}\Omega_{-1}^{*}\Omega_{0}^{*}W_1^{*}.\end{array} $$
At this time, under the assumption that $\phi$ is of parallel second fundamental form, it following from $[A_{\overline{z}}, [A_z, A_{\overline{z}}]]=M_1A_{\overline{z}}$ that
$$\left\{
\begin{array}{l} M_1I = -2\Omega_{-1}\Omega_{-1}^{*}+\Omega_{0}^{*}\Omega_{0}, \\
M_1I = -2\Omega_{0}^{*}\Omega_{0}+\Omega_{-1}\Omega_{-1}^{*},
\end{array}\right. $$
where $I$ is a $\left(2 \times 2\right)$-identity matrix, which implies
$L_{-1}=L_{0}=-2M_1$.
It concludes that $\phi: S^2 \rightarrow G(2, N; \mathbb{C})$ is totally real with parallel second fundamental form and isotropy order $r\geq 2$. Then from \cite{Berndt, HPn1} and \cite{JGA} , $\phi$ belongs to the following two cases:
\\ (1) $\underline{\phi}=\overline{\underline{UV}}^{(2)}_1\oplus \underline{UV}^{(2)}_1: S^2 \rightarrow G(2,6;\mathbb{C})$ or  $\underline{\phi}=\textbf{\underline{J}}\underline{UV}^{(2)}_1\oplus \underline{UV}^{(2)}_1: S^2 \rightarrow G(2,6;\mathbb{C})$ for some $U\in U(6)$;
\\ (2) $\underline{\phi}=\overline{\underline{UV}}^{(4)}_2\oplus \underline{UV}^{(4)}_2: S^2 \rightarrow G(2,10;\mathbb{C})$ or  $\underline{\phi}=\textbf{\underline{J}}\underline{UV}^{(4)}_2\oplus \underline{UV}^{(4)}_2: S^2 \rightarrow G(2,10;\mathbb{C})$  for some $U\in U(10)$.

In the absence of confusion, adding zeros to $V^{(2)}_1$, let $\widehat{V}^{(2)}_1=\frac{1}{1+z\overline{z}}(0, \ 0, \ 0, \ -2\overline{z}, \ \sqrt{2}(1-z\overline{z}), \ 2z)^T$ and $V^{(2)}_1=\frac{1}{1+z\overline{z}}(-2\overline{z}, \ \sqrt{2}(1-z\overline{z}), \ 2z, \ 0, \ 0, \ 0)^T$, in summary, for some $U\in U(6)$, $\phi$ shown in (1) can be expressed as
$$\underline{\phi}=\underline{U}\underline{\widehat{V}}_1^{(2)}\oplus \underline{UV}_1^{(2)}: S^2 \rightarrow G(2,6; \mathbb{C}),$$ which is totally geodesic with $K=\frac{1}{2}$ by a
series of calculations. It is congruent to the case (2) in Theorem 1.4.

Similarly, let $$\widehat{V}^{(4)}_2=\frac{2}{(1+z\overline{z})^2}(0, \ 0, \ 0, \ 0, \ 0, \ 6\overline{z}^2, \ 6\overline{z}(z\overline{z}-1), \ \sqrt{6}(1-4z\overline{z}+(z\overline{z})^2), \ 6z(1-z\overline{z}), \ 6z^2)^T,$$ 
 $$V^{(4)}_2=\frac{2}{(1+z\overline{z})^2}(6\overline{z}^2, \ 6\overline{z}(z\overline{z}-1), \ \sqrt{6}(1-4z\overline{z}+(z\overline{z})^2), \ 6z(1-z\overline{z}), \ 6z^2, \ 0, \ 0, \ 0, \ 0, \ 0)^T,$$
in summary, for some $U\in U(10)$,  $\phi$ in (2) can be expressed as
$$\underline{\phi}=\underline{U}\underline{\widehat{V}}_2^{(4)}\oplus \underline{UV}_2^{(4)}: S^2 \rightarrow G(2,10; \mathbb{C}),$$ which is of parallel second fundamental form  with $K=\frac{1}{6}$ and $\|B\|^2=\frac{2}{3}$, and it is congruent to the case (3) in Theorem 1.4.

Summing up, we get Theorem 1.4 in Section 1.

Theorems 1.1-1.4 in Section 1 determine all conformal minimal immersions of parallel second fundamental form from $S^2$ to $G(2,N;\mathbb{C})$.
It is easy to check that no two of these eighteen cases are congruent, i.e., we can not transform any one into another by  left multiplication by U(N).

Up to an isometry of $G(2,N;\mathbb{C})$,  Theorems 1.1-1.4 show that all   linearly full conformal
minimal immersions of parallel second fundamental form from $S^2$ to
$G(2,N;\mathbb{C})$ are presented by  Veronese surfaces
in $\mathbb{C}P^n$ for some $n < N$. It is easy to check that these eighteen
minimal immersions are all homogeneous. Of course they
contain those given by (\cite{HPn1}, Theorem 1.1 and \cite{JGA}, Theorem 1.1), even more than those (cf. cases (1)(2) and (3) shown in Theorem 1.1 etc.).


\begin{thebibliography}{90}
\bibitem{Berndt}
J. Berndt, \emph{Riemannian geometry of complex two-plane Grassmannians}, Rend. Sem. Mat. Univ. Pol. Torino,
55(1)(1997), 19-83.
\bibitem{Bolton}
J. Bolton, G.R. Jensen, M. Rigoli and L.M. Woodward, \emph{On
conformal minimal immersions of $S^2$ into $CP^n$}, Math. Ann.,
279(1988), 599-620.
\bibitem{Burstall}
F.E. Burstall and J.C. Wood, \emph{The construction of  harmonic
maps  into  complex Grassmannians}, J. Diff. Geom., 23(1986),
255-297.
\bibitem{Chern}
S.S. Chern  and J. G. Wolfson, \emph{Minimal surfaces by moving
frames}, Amer. J. Math., 105(1983), 59-83.
\bibitem{Chi}
Q.S. Chi and Y.B. Zheng, \emph{Rigidity of pseudo-holomorphic curves
of constant curvature in Grassmann manifolds}, Trans. Amer. Math.
Soc., 313(1989), 393-406.

\bibitem{D-H-Z1}
 L. Delisle, V. Hussin and W.J. Zakrzewski, \emph{Constant curvature solutions of Grassmannian sigma models: (1) Holomorphic solutions}, J. Geom. Phys., 66(2013), 24-36.
\bibitem{D-H-Z2} L. Delisle, V. Hussin and W.J. Zakrzewski, \emph{Constant curvature solutions of Grassmannian sigma models: (2) Non-holomorphic solutions}, J. Geom. Phys., 71(2013), 1-10.

\bibitem{Erdem}
S. Erdem and J.C. Wood, \emph{On the construction of harmonic maps
into a Grassmannian}, J. London Math. Soc., 28(1)(1983), 161-174.
\bibitem{Fei}
J. Fei, X.X. Jiao, L. Xiao  and X.W. Xu, \emph{On the classification of homogeneous 2-spheres in complex Grassmannians}, Osaka J. Math.,
50(2013), 135-152.
\bibitem{Ferus}
D. Ferus, \emph{Immersions with parallel second fundamental form}, Math. Z.,
140(1974), 87-93.
\bibitem{Ferus1}
D. Ferus, \emph{Symmetric submanifolds of Euclidean space}, Math. Ann.,
247(1)(1980),  81–93.
\bibitem{HP2}
L. He and X.X. Jiao, \emph{Classification of conformal minimal
immersions of constant curvature from $S^2$ to $HP^2$}, Math. Ann.,
359(3-4)(2014), 663-694.
\bibitem{HPn1}
L. He and X.X. Jiao, \emph{On conformal minimal
immersions of  $S^2$ in $HP^n$ with parallel second fundamental
form}, Ann.Mat. Pur. Appl., 194(5)(2015), 1301-1317.
\bibitem{Jiao2008}
X.X. Jiao, \emph{Pseudo-holomorphic curves of constant curvature in
complex Grassmannians}, Israel J. Math., 163(1)(2008), 45-63.
\bibitem{Jiao2012}
X.X. Jiao, \emph{On minimal two-spheres immersed in complex Grassmann manifolds with parallel second
fundamental form}, Monatsh. Math., 168(2012), 381-401.
\bibitem{JGA}
X.X. Jiao and M.Y. Li, \emph{On conformal minimal immersions of two-spheres in a
complex hyperquadric with parallel second fundamental form}, J. Geom. Anal., 26(2016):185-205.
\bibitem{Q4}
X.X. Jiao, M.Y. Li and H. Li, \emph{Rigidity of conformal minimal immersions of
constant curvature from $S_2$ to $Q_4$}, J. Geom. Anal., 31(2021):2212-2237.
\bibitem{Kagan}
V.F. Kagan, \emph{The fundamentals of the theory of surfaces in tensor presentation. Part two. Surfaces in space. Transformations and Deformations of surfaces. Special questions (Russian)}, Gosudarstv. Izdat. Tehn.-Teor. Lit., Moscow-Leningrad,
1948.
\bibitem{Naitoh}
H. Naitoh, \emph{Parallel submanifolds of complex space forms. II}, Nagoya Math. J., 91 (1983), 119–149. 
\bibitem{Nakagawa}
H. Nakagawa and R. Takagi, \emph{On locally symmetric Kaehler
submanifolds in a complex projective space}, J. Math. Soc. Japan,
28(4)(1976), 638-667.
\bibitem{Ros}
A. Ros, \emph{On spectral geometry of Kaehler submanifolds}, J. Math. Soc. Japan,
36(3)(1984), 433-447.
\bibitem{Takeuchi}
M. Takeuchi,  \emph{Parallel submanifolds of space forms}, Manifolds and Lie groups (Notre Dame, Ind., 1980), Progr. Math., 14, Birkhäuser, Boston, Mass., 1981, 429-447.
\bibitem{Uhlenbeck}
K. Uhlenbeck, \emph{Harmonic maps into Lie groups (classical
solutions of the chiral model)}, J. Diff. Geom., 30(1989), 1-50.
\bibitem{Wolfson}
J. G. Wolfson, \emph{Harmonic sequences and harmonic maps of surfaces into complex Grassmann manifolds}, J. Diff. Geom., 27(1988), 161-178.
\end{thebibliography}
\end{document}